\newtheorem{lemma}{Lemma}[section]
\newtheorem{theorem}[lemma]{Theorem}
\newtheorem{prop}[lemma]{Proposition}
\newtheorem{cor}[lemma]{Corollary}
\theoremstyle{definition}
\newtheorem{example}[lemma]{Example}
\newtheorem{remark}[lemma]{Remark}
\newenvironment{proof_of}[1]{\medskip\noindent{\it Proof #1}}
{\hfill$\Box$ \bigskip}
\newcounter{bwt}
\DeclareMathOperator{\CC}{\mathbb{C}}
\DeclareMathOperator{\QQ}{\mathbb{Q}}
\DeclareMathOperator{\FF}{\mathbb{F}}
\DeclareMathOperator{\NN}{\mathbb{N}}
\DeclareMathOperator{\B}{\mathcal{B}}
\DeclareMathOperator{\Id}{\mathrm{Id}}
\DeclareMathOperator{\Sym}{\mathcal{S}}
\DeclareMathOperator{\A}{\mathcal{A}}
\DeclareMathOperator{\I}{\mathcal{I}}
\DeclareMathOperator{\J}{\mathcal{J}}
\DeclareMathOperator{\CCX}{\mathbb{C}\langle X\rangle}
\DeclareMathOperator{\dg}{\mathrm deg}
\DeclareMathOperator{\mdeg}{\mathrm mdeg}
\newenvironment{eq}{\begin{equation}}{\end{equation}}
\newcommand{\al}{\alpha}
\newcommand{\be}{\beta}
\newcommand{\ga}{\gamma}
\newcommand{\de}{\delta}
\newcommand{\si}{\sigma}
\newcommand{\lin}{\mathop{\rm lin}}
\newcommand{\ov}[1]{\overline{#1}}
\newcommand{\un}[1]{{\underline{#1}} }
\title[Novikov algebras in low dimension: identities, images and codimensions]%
{Novikov algebras in low dimension: identities, images and codimensions}
\author{Iritan Ferreira dos Santos, Alexey M. Kuz'min, and Artem Lopatin}
\address{Iritan Ferreira dos Santos, Universidade Estadual de Campinas (UNICAMP), 651 Sergio Buarque de Holanda, 13083-859 Campinas, SP, Brazil}
\email{ i195167@dac.unicamp.br (Iritan Ferreira dos Santos)}
\address{Alexey M. Kuz’min, Universidade Federal do Rio Grande do Norte, Departamento
de Mathemática, Centro de Ciências Exatas e da Terra, Campus Universitário,
Lagoa Nova, Natal, RN, 59078-970, Brazil}
\email{ amkuzmin@ya.ru (Alexey M. Kuz’min) }
\address{Artem Lopatin, Universidade Estadual de Campinas (UNICAMP), 651 Sergio Buarque de Holanda, 13083-859 Campinas, SP, Brazil}
\email{ dr.artem.lopatin@gmail.com (Artem Lopatin)}
\begin{document}

\maketitle

\begin{abstract}
Polynomial identities of two-dimensional Novikov algebras are studied over the complex field $\CC$. We determine minimal generating
sets for the T-ideals of the polynomial identities and linear
bases for the corresponding relatively free algebras. As a consequence, we establish  that polynomial identities separate two-dimensional Novikov algebras, which are not associative. Namely, any two-dimensional Novikov algebras, which are not associative, are isomorphic if and only if they satisfy the same polynomial identities. Moreover, we obtain the codimension sequences of all these algebras. In particular, every two-dimensional Novikov algebra has at most linear growth of its  codimension sequence.  We explicitly describe multilinear images of every two-dimensional Novikov algebra. In particular, we show that these images are vector spaces.

\medskip
\noindent    
{\sc Keywords:} polynomial identities, nonassociative algebra, Novikov algebra, codimension sequence,  L'vov–Kaplansky conjecture, images of polynomials on algebras.

\medskip
\noindent
\textit{MSC 2020:} 17A30, 17A50, 17D25, 17D99.
\end{abstract}

%\tableofcontents

%=================================================================
%=================================================================
\section{Introduction}

%=================================================================
\subsection{Novikov algebras} Throughout the paper, by an algebra we mean an $\FF$-vector space endowed with a bilinear product, where the field $\FF$ has an arbitrary characteristic $p\geqslant 0$. An algebra $\mathcal N$ is called a \textit{Novikov algebra} if $\mathcal N$ satisfies the 
following polynomial identities
\begin{align}
(x,y,z)&=(y,x,z)\qquad\textit{(the identity of left symmetry for associators)},
\label{left-symm}\\
(xy)z&=(xz)y\qquad\textit{(the identity of right commutativity)},
\label{right-commut}
\end{align}
where $(x,y,z)=(xy)z-x(yz)$ stands for the \textit{associator} in variables~$x,y,z$.  Novikov algebras were independently introduced in the study of Hamiltonian operators in the formal calculus of variations by Gel'fand and Dorfman~\cite{gel1979hamiltonian} as well as in the connection with linear Poisson brackets of hydrodynamic type by Balinskii and  Novikov~\cite{balinskii1985poisson}.

Novikov algebras are tightly connected with Lie algebras. Namely, for an algebra $\A$ denote by $\A^{(-)}$ the vector space $\A$ endowed with a new product $[-, -]$ defined by $[x, y] := x y-y x$ for all $x,y\in \A$. An algebra $\A$ is called a \textit{Lie-admissible algebra} if $\A^{(-)}$ is a Lie algebra. In this case, we call $\A^{(-)}$ the \textit{associated Lie algebra} of $\A$. 
%Recall that any algebra satisfying polynomial identity~\eqref{left-symm} is called \textit{left-symmetric algebra}. 
Every algebra satisfying polynomial identity~\eqref{left-symm} (and hence every Novikov algebra) is Lie-admissible.

Zelmanov~\cite{zel1987class} proved that every simple finite-dimensional Novikov algebra over an algebraically closed field of characteristic zero is one-dimensional.  Filippov~\cite{filippov1989class} constructed first examples of simple infinite-dimensional Novikov algebras over fields of characteristic $p\geqslant 0$ and simple finite-dimensional Novikov algebras over fields of characteristic $p>0$. Xu~\cite{Xu_1996} gave a  complete classification of finite-dimensional simple Novikov algebras over an algebraically closed field of characteristic $p>2$. Xu~\cite{Xu_2001} also established a classification of simple infinite-dimensional Novikov algebras over an algebraically closed field of characteristic zero.  

Bai and Meng~\cite{bai2001classification} classified all Novikov algebras of dimension two and three over $\CC$. There exists, up to isomorphism, seven two-dimensional Novikov algebras with nontrivial multiplication and a one-paremeter family $\mathbf{N}^{\ell}_6$ of two-dimensional Novikov algebras with nontrivial multiplication, where $l\in\CC\backslash\{0,1\}$  (see Theorem~\ref{theor-bai-meng} for more details). Moreover, Novikov algebras over $\CC$ were also classified in higher dimensions. Namely,  four-dimensional Novikov algebras with nilpotent associated Lie algebra were classified in~\cite{Burde_Graaf_2013} and  four-dimensional nilpotent Novikov algebras were described in~\cite{Kay_Novikov2019}. One-generated five- and six-dimesional nilpotent Novikov algebras were classified in~\cite{Kay_Novikov2022}. The geometric classification of three-dimensional Novikov algebras was given in~\cite{Benes_Burde_2014} and of four-dimensional nilpotent Novikov algebras in~\cite{Kay_Novikov2019}. Five-dimensional nilpotent Novikov algebras were classified in~\cite{Kay_Novikov2023}.

%=================================================================
\subsection{Polynomial identities} Although the theory of algebras with polynomial identities is a well-developed area of algebra with many deep results, there are still few results with explicit description of generators of the T-ideals of polynomial identities for particular finite-dimensional algebras. As an example, polynomial identities for $2\times 2$ matrices are known over a field of characteristic different from two (see~\cite{Drensky_1981_2x2, Koshlukov_2001_2x2, Colombo_Koshlukov_2004_2x2}), but the case of infinite field with characteristic two is still open. Moreover, the generators for the T-ideal of $3\times3$ matrices are not known over any infinite field.  Another open problem is the description of polynomial identities for the three-dimensional simple Lie algebra $\FF^3$ equipped with the cross product over a field of characteristic two.  Our main result is the description of polynomial identities for every two-dimensional Novikov algebra over the complex numbers $\CC$.

The classification of all two-dimensional algebras over an arbitrary algebraically closed field of any characteristic can be found in~\cite{kaygorodov2019variety} (see also~\cite{Ananin_Mironov_2000, Petersson_2000, Goze_Remm_2011}). A classification of two-dimensional algebras with respect to ``classical''{} polynomial identities was given by Ahmed, Bekbaev, Rakhimov~\cite{Ahmed_Bekbaev_Rakhimov_2020} over a field, where any second and third degree polynomial has a root.  Drensky~\cite{drensky2019varieties} applied the classification from~\cite{kaygorodov2019variety} to study two-dimensional bicommutative algebras and to describe its polynomial identities over an arbitrary field of characteristic zero. Moreover, he showed that one of these algebras generates the variety of all bicommutative algebras. 

Polynomial identities of an algebra are tightly connected with a basis of a corresponding relatively free algebra. Ferreira dos Santos, and Kuz'min~\cite{santos2023free} obtained an explicit linear basis and the multiplication table for the free weakly Novikov metabelian algebra of infinite rank over an arbitrary field of characteristic different from $2$. Dauletiyarova, Abdukhalikov, Sartayev~\cite{dauletiyarova2024free} found a linear basis for free metabelian Novikov algebra. 

Working over an infinite field with $p\neq 2$, Diniz, Gonçalves, da Silva and Souza~\cite{diniz2023two} described a finite generating set for the T-ideal of polynomial identities for every two-dimensional Jordan algebra and linear bases for the corresponding relatively free algebras were also given. In
\cite{diniz2024isomorphism} they showed that any two-dimensional Jordan algebras over a finite field $\FF$ with $p\neq2$ are isomorphic if and only if they satisfy the same polynomial identities. Moreover, a finite generating set for the T-ideal of polynomial identities for every two-dimensional Jordan algebra was determined in~\cite{diniz2024isomorphism} when $\FF$ is finite with $p\neq2$ and linear bases for the corresponding relatively free algebras were also given.

Recently, Dotsenko,  Ismailov and Umirbaev~\cite{dotsenko2023polynomial} proved that a Novikov algebra $\mathcal N$ over a field of characteristic zero satisfies a non-trivial polynomial identity (in the variety of Novikov algebras) if and only if the associate Lie algebra ${\mathcal N}^{(-)}$ is solvable. Moreover, they also established the Specht property for  the variety of Novikov algebras over a field of characteristic zero. Recall that a variety $\mathcal V$ of algebras is called \textit{Spechtian} or \textit{has the Specht property} if every T-ideal of polynomial identities for $\mathcal{V}$ is finitely generated as a T-ideal. For more details and history on Specht property for nonassociative algebras, we refer to~\cite{ismailov2024variety}. In~\cite{DotsenkoZhakhayev_2024} some properties of one-parameter family $\mathbf{N}^{\ell}_6$ were studied. In particular, it was proven that if two algebras from $\mathbf{N}^{\ell}_6$ satisfy the same polynomial identities, then they are isomorphic. 

Giambruno, Mishchenko, and Zaicev~\cite{giambruno2007codimension} established that
the growth of the codimension sequence $c_{n}(\A)$ for a two-dimensional nonassociative algebra $\A$ over a field of characteristic zero 
is bounded by $n+1$ or it grows exponentially as $2^n$ (see Section~\ref{section_codim} for the definitions). Namely, they proved that
$$c_n(\A)\leqslant n+1 \text{ or } \frac{1}{n^2}2^n\leqslant c_n(\A)\leqslant 2^n.$$
They also constructed a family of two-dimensional algebras $\{\A_{\alpha}\}_{\alpha\in\QQ}$ satisfying equality $c_n(\A_{\alpha})=n+1$ and having pairwise different T-ideals of polynomial identities.

Another problem, closely related to polynomial identities of an algebra $\A$, is the description of an image of $\A$ with respect to a multilinear polynomial $f$ (see Section~\ref{section_image} for the definitions). The generalized L'vov--Kaplansky conjecture claims that the multilinear image of any finite-dimensional simple algebra is always a vector space. Originally, L'vov--Kaplansky conjecture was formulated for matrices, but it was solved only for $2\times2$ matrices by Kanel-Belov, Malev, Rowen~\cite{Belov_Malev_Rowen_2012}. The generalized L'vov--Kaplansky conjectureis was established for the algebra
of real quaternions by Malev~\cite{Malev_2021}, the algebra of real octonions by Kanel-Belov, Malev, Pines, Rowen~\cite{Belov_Malev_Pines_Rowen_2024}, the Jordan algebra of a nondegenerated bilinear form by Malev, Yavich, Shayer~\cite{Malev_Yavich_Shayer_2022}. The image of null-filiform Leibniz algebras was described by de Mello, Souza~\cite{deMello_Souza_2023} for any multihomogeous polynomial; in particular, the  multilinear image is a vector space.

%=================================================================
\subsection{Results}
In our paper we work over the complex numbers $\CC$. In Section~\ref{section_notations} key definitions are given together with the formulation of Theorem~\ref{theor-bai-meng} classifying Novikov algebras of dimension two. In Section~\ref{section-associative case}, we describe generators for the T-ideals of polynomial identities for all two-dimensional associative Novikov algebras as well as linear bases for the corresponding relatively free algebras (see Propositions~\ref{prop-T2}, \ref{prop-N1-N2-N3}, \ref{prop-N4}). In Sections~\ref{section-nonassociative-T3}--\ref{section-nonassociative-N5}, we describe generators for the T-ideals of polynomial identities for all two-dimensional nonassociative Novikov algebras as well as linear bases for the corresponding relatively free algebras (see Theorems~\ref{theorem-T3}, \ref{theorem-N6}, \ref{theorem-N5}).

As corollaries from our results, in Corollary~\ref{cor_isomorphism} we establish that polynomial identities separate two-dimensional Novikov algebras, which are not associative. Namely, any two-dimensional Novikov algebras, which are not associative, are isomorphic if and only if they satisfy the same polynomial identities.  Note that Novikov algebras $\mathbf{N}_5$ and $\mathbf{N}_6^{\ell}$ have the same bases for relatively free algebras, but satisfy different polynomial identities. We also obtain the codimension sequences of all these algebras in Corollaries~\ref{cor-codimension-associative} and \ref{cor-codimension-nonassociative}. In particular, every two-dimensional Novikov algebra has at most linear growth of its  codimension sequence.  In Theorem~\ref{theo_image} we explicitly describe multilinear images of every two-dimensional Novikov algebra. In particular, we show that these images are vector spaces (see Corollary~\ref{cor_image}).

%=================================================================
%=================================================================
\section{Auxilliaries}\label{section_notations}

In the rest of the paper we work over the field $\FF=\CC$ of complex numbers. 
We write $\CC\langle x_1,\ldots, x_n \rangle$ for the free 
nonassociative non-unital $\CC$-algebra with free generators
$x_1, \ldots , x_n$. 
By $\CCX$ we denote the free nonassociative algebra on the infinite and enumerable set~$X=\{x_1, x_2, \ldots \}$
of free generators.

Recall that a polynomial 
$f(x_1, \ldots, x_n)$ in $\CCX$ is a \textit{polynomial identity}
for an
algebra $\A$ if $f(a_1, \ldots, a_n) = 0$ for all $a_1, \ldots, a_n\in \A$.
If $\A$ satisfies 
a nontrivial polynomial
identity, we call $\A$ an
\textit{algebra with polynomial identity}. The set 
$\Id_{\CC}(\A) = \Id(\A)$ of
all polynomial identities for $\A$ is a 
T-ideal, i.e., $\Id(\A)$ is an ideal of 
$\CCX$ such that
$\phi(\Id(\A))\subseteq \Id(\A)$ for every endomorphism $\phi$ of $\CCX$.
A T-ideal $I$ of $\CCX$ generated polynomials 
$f_1, \ldots, f_{k} \in \CCX$ 
is the minimal T-ideal of $\CCX$ that contains 
$f_1, \ldots, f_k$. 
Denote
$I = \Id(f_1, \ldots , f_k)$. We say that $f\in \CCX$ is a consequence of $f_1, \ldots, f_k$ if $f\in I$. A non-empty product of some letters from $\{x_1,x_2,\ldots,\}$ is called a monomial. Given
a monomial $w$ in $\CC\langle x_1,\ldots, x_n \rangle$, we write $\dg_{x_i}(w)$ for the number of letters $x_i$
in $w$ and
$\mdeg(w)\in \NN^n$ for the multidegree 
$(\dg_{x_1}(w), \ldots , \dg_{x_n}(w))$ of $w$, where $\NN=\{0,1,2, \ldots\}$.
An element $f\in \CCX$ is called (multi)homogeneous if it is a linear combination of monomials of the same (multi)degree. 
If $f\in \CCX$ is multihomogeneous of multidegree
$1^{n} = (1,\ldots , 1)$ ($n$ times), then $f$ is called \textit{multilinear}. Given $f\in\CCX$, we have $f=f_1+\cdots +f_k$ for some   unique multihomogeneous $f_1,\ldots, f_k$, which are called multihomogeneus components of $f$. Given  $\un{\de}=(\de_1,\ldots,\de_n)\in\NN^n$, we denote $|\un{\de}|=\de_1 + \cdots + \de_n$ and define a function $\un{\de}|\cdot|:\{1,\ldots,|\un{\de}|\}\to \{1,\ldots,n\}$ by
$$\un{\de}|r|=k \text{ if and only if }\de_1+\cdots+\de_{k-1}+ 1\leqslant r \leqslant \de_1+\cdots+\de_{k},$$
for all $1\leqslant r\leqslant |\un{\de}|$.

Assume $f\in \CCX$ is multihomogeneous of multidegree $\un{\de}\in\NN^n$. Given $1\leqslant i\leqslant n$ and $\un{\ga}\in\NN^k$ for some $k>0$ with $|\un{\ga}|=\de_i>0$, the {\it partial linearization} $\lin_{x_i}^{\un{\ga}}(f)$ of $f$ of multidegree $\un{\ga}$ with respect to $x_i$ is the multihomogeneous component of 
$$f(x_1,\ldots,x_{i-1},x_{i}+\cdots+x_{i+k-1},x_{i+k},\ldots,x_{n+k-1})$$ 
of multidegree $(\de_1,\ldots,\de_{i-1},\ga_{1},\ldots,\ga_{k},\de_{i+1},\ldots,\de_{n}$). As an example, 
\begin{multline*}
\rm{lin}_{x_2}^{(2,1)}\biggl((x_1 (x_1 x_2 )) (x_2 (x_2 x_3))\biggr) = \\
((x_1 (x_1 x_2 )) (x_2 (x_3 x_4)) + ((x_1 (x_1 x_2 )) (x_3 (x_2 x_4)) + ((x_1 (x_1 x_3 )) (x_2 (x_2 x_4)).
\end{multline*}

We will use the following trivial remark without reference to it. 

\begin{remark} Assume that an ideal $\I$ of $\CCX$ is generated by some multihomogeneous elements and $f\in\CCX$ is multihomogeneous of multidegree $\un{\de}$. Let $f-h\in\I$ for some $h\in \CCX$. Then there are unique $h_1,h_2\in\CCX$ such that $h=h_1+h_2$, where $h_1$ is multihomogeneous with $\mdeg(h_1)=\un{\de}$, the multihomogeneous component of $h_2$ of multidegree $\un{\de}$ is zero and $h_2\in\I$.  
\end{remark}

The result of subsequent applications of partial linearizations to $f$ is also called a partial linearization of $f$. The {\it complete linearization} $\lin(f)$ of $f$ is the result of subsequent applications of $\lin_{x_1}^{1^{\de_1}},\ldots, \lin_{x_n}^{1^{\de_n}}$ to $f$. Since $\CC$ is infinite, partial linearizations of an polynomial identity for an algebra $\A$ are polynomial identities for $\A$. The \textit{symmetric group} on the set $\{1, 2, \ldots, n\}$ is denoted by $\Sym_n$.

%Denote by $R_x$ and $L_x$ the operators of right and left multiplication, respectively, by an element $x$, i.e. $aR_x=ax$ and $aL_{x}=xa$. We use $T_x$ as a common notation for $R_x$ and $L_x$.   
%The \textit{symmetric group} on the set $\{1, 2, \ldots, n\}$ is denoted by $\mathcal{S}_n$.

The sum of the coefficients of a $f=\sum_{i} \al_i u_i\in\CCX$, where $\al_i\in\CC$ and $u_i$ is a monomial, we denote by $s(f)=\sum_i \al_i$.  

For $f_1, \ldots, f_n \in \CCX$, we write $f_1f_2\cdots f_n:= (\cdots((f_1f_2)f_3)\cdots)f_n$.  
If we need to be more specific, then we will use the following
notation for the left-bracket product of letters $(x_{i_1}\cdots x_{i_n})_{L} = (\cdots ((x_{i_1} x_{i_2})x_{i_3})\cdots )x_{i_n}$. In case $i_r=\cdots=i_s=j$ for some $1\leqslant r<s\leqslant n$, for short, we write 
$$(x_{i_1}\cdots x_{i_{r-1}} x_{j}^{s-r+1} x_{i_{s+1}} \cdots x_{i_n})_{L} = (x_{i_1}\cdots x_{i_n})_{L}.$$
Similarly, denote the right-bracket product of letters by $(x_{i_1}\cdots x_{i_n})_{R}$. For short, we introduce following notations for some monomials from $\CCX$, which play an important role in our paper:
\begin{eq}\label{eq_notations}
\begin{array}{ll}
v_{i,n}=v_{i,n}(x_1,\ldots,x_n) =  x_{i}x_{1} \cdots \cdot x_{i-1}  x_{i+1} \cdots x_n, & \text{ where } 1\leqslant i\leqslant n,\; n\geqslant 1, \\
w_n=w_n(x_1,\ldots,x_n)=(x_{1} (x_{2} x_{3})) x_{4} \cdots x_{n},
& \text{ where } n\geqslant 3. \\
\end{array}
\end{eq}

Throughout the paper
we will use the following result obtained by Bai and Meng~\cite{bai2001classification}.

%-----------------------------------------------------------------
\begin{theorem}[\!\!\cite{bai2001classification}]
\label{theor-bai-meng}
When the field is $\mathbb C$ any two-dimensional
Novikov algebra is isomorphic to one of the
following algebras:
\begin{equation*}
    \mathbf{T}_1,\mathbf{T}_2, \mathbf{T}_3, \mathbf{N}_1, \mathbf{N}_2, \mathbf{N}_3, \mathbf{N}_4, \mathbf{N}_5, \mathbf{N}^{\ell}_6 \;\;( \text{for }\ell \in\CC\backslash\{0,1
    \}),
\end{equation*}
where the algebras have basis 
$\{e_1, e_2\}$ and the multiplication tables are given below:
\[
\begin{array}{lllll}
\mathbf{T}_1: & e_1 e_1=0, & e_1 e_2=0, & e_2 e_1=0, & e_2 e_2=0 ;\\
\mathbf{T}_2: & e_1 e_1=e_2, & e_1 e_2=0, & e_2 e_1=0, & e_2 e_2=0 ; \\
\mathbf{T}_3: & e_1 e_1=0, & e_1 e_2=0, & e_2 e_1=-e_1, & e_2 e_2=0 ; \\
\mathbf{N}_1: & e_1 e_1=e_1, & e_1 e_2=0, & e_2 e_1=0, & e_2 e_2=e_2 ; \\
\mathbf{N}_2: & e_1 e_1=e_1, & e_1 e_2=0, & e_2 e_1=0, & e_2 e_2=0 ;
\\
\mathbf{N}_3: & e_1 e_1=e_1, & e_1 e_2=e_2, & e_2 e_1=e_2, & e_2 e_2=0 ;
\\
\mathbf{N}_4: & e_1 e_1=0, & e_1 e_2=e_1, & e_2 e_1=0, & e_2 e_2=e_2 ;
\\
\mathbf{N}_5: & e_1 e_1=0, & e_1 e_2=e_1, & e_2 e_1=0, & e_2 e_2=e_1+e_2;
\\
\mathbf{N}^{\ell}_6: & e_1 e_1=0, & e_1 e_2=e_1, & e_2 e_1=\ell e_1, & e_2 e_2=e_2.
\end{array}
\]
\end{theorem}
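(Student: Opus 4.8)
The plan is to prove the classification by the standard structure-constant method, followed by reduction under change of basis. First I would fix a basis $\{e_1,e_2\}$ and write the unknown multiplication as $e_ie_j = c_{ij}^1 e_1 + c_{ij}^2 e_2$, with eight structure constants $c_{ij}^k\in\CC$. Since both defining identities \eqref{left-symm} and \eqref{right-commut} are multilinear of degree three, a product satisfies them on all of $\A$ if and only if they hold on every triple of basis vectors; so I would substitute $x,y,z\in\{e_1,e_2\}$, expand by bilinearity, and collect the resulting system of quadratic equations in the $c_{ij}^k$. This reduces the Novikov condition to membership of the structure-constant vector in an explicit affine variety $V\subseteq\CC^8$.

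Next I would solve this polynomial system. It is convenient to pass to operator language: right commutativity \eqref{right-commut} is equivalent to $R_yR_z=R_zR_y$ (the right multiplications commute), and left symmetry \eqref{left-symm} says the associator is symmetric in its first two arguments, i.e. $\A$ is left-symmetric (pre-Lie). These reformulations cut the number of independent equations sharply. To keep the case analysis organized, I would stratify by the $GL_2$-invariant $\dim\A^2$, where $\A^2=\A\cdot\A$: the stratum $\dim\A^2=0$ gives the zero algebra $\mathbf{T}_1$, while $\dim\A^2=1$ and $\dim\A^2=2$ are handled separately. Within each stratum I would use further invariant data — existence of a nonzero idempotent, the nilpotency behavior of the left and right multiplication operators, and whether $\A^2$ is a one-sided or two-sided ideal — to pin down the possible multiplication tables.

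Then, to pass from admissible tables to isomorphism classes, I would quotient $V$ by the natural action of $GL_2(\CC)$ coming from the change of basis $e_i\mapsto\sum_j g_{ij}e_j$, which transforms the structure constants tensorially. In each stratum I would normalize a distinguished vector (an idempotent to $e_1$, or a generator of $\A^2$ to a fixed basis element) and spend the remaining freedom in $GL_2$ to bring the leftover constants to canonical values, producing exactly the tables $\mathbf{T}_2,\mathbf{T}_3,\mathbf{N}_1,\dots,\mathbf{N}_5$ together with the family $\mathbf{N}_6^{\ell}$. Finally I would check that the listed algebras are pairwise non-isomorphic by exhibiting a $GL_2$-invariant separating them, and that within $\mathbf{N}_6^{\ell}$ the only coincidences correspond to the residual symmetry of the parameter, so that the stated range $\ell\in\CC\setminus\{0,1\}$ is correct.

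The main obstacle I expect lies in the orbit analysis under $GL_2(\CC)$: the polynomial system itself is mechanical, but carving the solution variety $V$ into orbits demands a careful, exhaustive case split between degenerate and generic configurations of the multiplication operators, with vigilance against both omitting a class and listing one class twice under different normalizations. Managing the one-parameter family $\mathbf{N}_6^{\ell}$ — identifying exactly which values of $\ell$ yield isomorphic algebras and excluding the degenerate values $\ell\in\{0,1\}$ as already appearing among the earlier algebras — is the most delicate point.
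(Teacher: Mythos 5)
First, a point of orientation: the paper does not prove this statement at all. Theorem~\ref{theor-bai-meng} is quoted from Bai and Meng~\cite{bai2001classification} and serves as the input to the paper's own results, so your attempt can only be compared with the external literature, not with an internal argument. On the merits, your plan follows the standard and correct route: encode the product in structure constants $c_{ij}^k$, note that the two defining identities are multilinear and hence need only be checked on basis triples, reformulate right commutativity as commutativity of the right multiplication operators and left symmetry as the pre-Lie condition, stratify by invariants such as $\dim\A^2$ and the existence of idempotents, and then classify orbits under the change-of-basis action of $GL_2(\CC)$. This is close in spirit to what Bai and Meng actually do (they organize the case split via the associated Lie algebra $\A^{(-)}$, which in dimension two is either abelian or the unique non-abelian one, and classify the compatible Novikov products on each); your stratification is a cosmetic variant of theirs. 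Your closing remarks about the parameter are also essentially right, and can be made precise: $\mathbf{N}_6^{0}$ has literally the table of $\mathbf{N}_4$, and $\mathbf{N}_6^{1}$ is unital with a square-zero element, hence isomorphic to $\CC[x]/(x^2)\cong\mathbf{N}_3$, which is why $\ell\in\{0,1\}$ is excluded; moreover there is no residual identification inside the family, because $\ell$ is recovered as the eigenvalue of left multiplication by the unique nonzero idempotent $e_2$ acting on the ideal $\CC e_1$, so distinct admissible $\ell$ give non-isomorphic algebras.

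The genuine gap is that your text is a plan rather than a proof. Every step that carries mathematical content is deferred: the quadratic system in the $c_{ij}^k$ is never written down or solved, the orbit analysis under $GL_2(\CC)$ is never performed, no normalization is actually carried out, and no invariant separating the nine families is exhibited. In a classification theorem this exhaustive case analysis \emph{is} the proof; you yourself identify it as "the main obstacle" and stop there. As it stands, nothing is established beyond the (correct) assertion that the standard method would work. To be acceptable, the argument would have to be completed by executing the case analysis in full — or one simply cites \cite{bai2001classification}, which is precisely what the paper does.
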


Given a T-ideal $\I$, with abuse of notations we denote the elements of $\CCX / \I$ by the same manner as elements of $\CCX$ (as an example, see Proposition~\ref{prop-T2} and its proof). Since $\CC$ is infinite, the following remark is trivial.

%-----------------------------------------------------------------
\begin{remark}\label{remark_key}
Assume $\A$ is an algebra, $\I$ is a T-ideal in $\Id(\A)$, $\B=\{f_i\,|\,i\in I\}$ is a subset of $\CCX$ of multihomogeneous elements  such that $\CCX / \I$ is a $\CC$-span of $\{f+\I \,|\, f\in\B\}$.  Then
\begin{enumerate}
\item[(a)] the image $\ov{\B}$ of $\B$ in $\CCX / \Id(\A)$ is its basis if and only if for every multidegree $\un{\de}\in\NN^n$, $n>0$, with non-empty set 
$I_{\un{\de}} = \{ i\in I\,|\, \mdeg(f_i)=\un{\de}\}$, if 
$$\sum_{i\in I_{\un{\de}}} \al_i f_i \in \Id(\A),\;\; \text{ where }\al_i\in \CC,$$
then $\al_i=0$ for all $i\in I_{\un{\de}}$; 

\item[(b)] if $\ov{\B}$ is a basis for $\CCX / \Id(\A)$, then $\Id(\A)=\I$.
\end{enumerate}
\end{remark}

Given and algebra $\A$, we write $\A^{\rm op}$ for the {\it opposite} algebra, i.e., as the vector space $\A^{\rm op}$ coincides with $\A$ and the multiplication $\ast$ is given on $\A^{\rm op}$ by $a\ast b= ba$ for all $a,b\in \A^{\rm op}$.

%=================================================================
%=================================================================
\section{Two-dimensional associative  Novikov algebras}\label{section-associative case}

%-----------------------------------------------------------------
\begin{prop}\label{prop-T2}
The algebra 
$\mathbf{T}_2$ 
is commutative, associative and nilpotent of index $3$. The ideal   
$\Id(\mathbf{T}_2)$
is minimally generated, as a T-ideal, by the polynomials
\begin{equation}\label{T-ideal(T2)}
    [x_1, x_2] \quad \text{and} \quad
(x_1 x_2)x_3.
\end{equation}
Moreover, the image of the following subset of $\CCX$ 
\begin{equation}\label{span-T2}
x_i, \;  \quad x_j x_k \quad 
(1\leqslant i,\; 1\leqslant j \leqslant k)
\end{equation}
in the relatively free algebra  $\CCX/\Id(\mathbf{T}_2)$ is its basis.
\end{prop}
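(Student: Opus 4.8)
The plan is to prove three things: (1) $\mathbf{T}_2$ is commutative, associative and nilpotent of index $3$; (2) $\Id(\mathbf{T}_2)$ is generated as a T-ideal by $[x_1,x_2]$ and $(x_1x_2)x_3$, and this generating set is minimal; (3) the image of the set \eqref{span-T2} is a basis for $\CCX/\Id(\mathbf{T}_2)$. The strategy is to first establish (1) and (3) by direct computation, then use Remark~\ref{remark_key} to upgrade the spanning statement into the T-ideal identity, which simultaneously settles (2).

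First I would verify the structural claims about $\mathbf{T}_2$ directly from the multiplication table. Since the only nonzero product is $e_1e_1=e_2$, commutativity is immediate (each $e_ie_j$ equals $e_je_i$ by inspection). For associativity, one checks that both $(e_ie_j)e_k$ and $e_i(e_je_k)$ vanish for all basis triples: any product of three basis elements must involve $e_2$ as an inner or outer factor after one multiplication, and $e_2$ annihilates everything. This shows every product of length $3$ is zero, so $\mathbf{T}_2$ is nilpotent of index $3$ (index exactly $3$ because $e_1e_1=e_2\neq 0$). Consequently $[x_1,x_2]$ and $(x_1x_2)x_3$ both lie in $\Id(\mathbf{T}_2)$; call the T-ideal they generate $\I$, so $\I\subseteq\Id(\mathbf{T}_2)$.

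Next I would show that the image of \eqref{span-T2} spans $\CCX/\I$. Working modulo $\I$, the identity $(x_1x_2)x_3$ forces every monomial of degree $\geqslant 3$ to vanish (any monomial of degree $3$ or more contains a left-normed subproduct of the form $(\cdot\,\cdot)\cdot$, hence is a consequence of $(x_1x_2)x_3$ by the T-ideal closure and the substitution-plus-right-multiplication argument). The identity $[x_1,x_2]$ lets me reorder the two factors in any degree-$2$ monomial, so each $x_jx_k$ is congruent to $x_jx_k$ with $j\leqslant k$. Thus modulo $\I$ only the linear monomials $x_i$ and the ordered quadratic monomials $x_jx_k$ ($j\leqslant k$) survive, giving the spanning property $\CCX/\I=\CC\text{-span}\{f+\I : f\in\eqref{span-T2}\}$.

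Finally I would invoke Remark~\ref{remark_key}. By part (a), it suffices to check linear independence of the images in $\CCX/\Id(\mathbf{T}_2)$ multidegree by multidegree: a linear combination of the $x_i$ is an identity only if all coefficients vanish (evaluate at $e_1$), and a combination $\sum\al_{jk}x_jx_k$ (with $j\leqslant k$) is an identity only if all $\al_{jk}=0$. The latter I would test by substituting $x_j\mapsto e_1$ for the relevant indices and reading off the coefficient of $e_2$, exploiting that $e_1e_1=e_2$ is the only nonzero product; distinct ordered pairs $(j,k)$ are separated because evaluating on suitable combinations of $e_1$ in the $j$- and $k$-slots produces linearly independent outputs in the commutative nilpotent algebra. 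Once independence holds, Remark~\ref{remark_key}(a) gives that the image of \eqref{span-T2} is a basis, and then part (b) yields $\Id(\mathbf{T}_2)=\I$, proving the generation claim. Minimality of the generating set \eqref{T-ideal(T2)} follows because neither generator is a consequence of the other: $[x_1,x_2]$ is multilinear of degree $2$ and cannot be produced from the degree-$3$ identity $(x_1x_2)x_3$, while $(x_1x_2)x_3$ is not a consequence of $[x_1,x_2]$ alone since the free commutative (nonassociative) algebra on one generator has nonzero products of every degree. The step I expect to be the main obstacle is the multidegree-wise linear independence in the quadratic part: I must choose evaluations that cleanly separate the ordered pairs $(j,k)$, and care is needed for the diagonal case $j=k$ versus $j<k$, since the single nonzero structure constant means only diagonal substitutions $x_j=x_k=e_1$ produce the nonzero element $e_2$.
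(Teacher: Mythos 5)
Your proposal is correct and follows essentially the same route as the paper: verify that $[x_1,x_2]$ and $(x_1x_2)x_3$ hold in $\mathbf{T}_2$, show set~\eqref{span-T2} spans $\CCX/\I$, then apply Remark~\ref{remark_key}(a)--(b) to get independence and $\Id(\mathbf{T}_2)=\I$ (your explicit minimality argument is also fine; the paper simply calls this step obvious). The ``main obstacle'' you anticipate is actually vacuous: distinct pairs $(j,k)$ with $j\leqslant k$ have distinct multidegrees, so Remark~\ref{remark_key}(a) reduces the independence check to a single monomial per multidegree, where $\al\, x_jx_k\in\Id(\mathbf{T}_2)$ forces $\al=0$ because $e_1e_1=e_2\neq0$ --- which is exactly why the paper's independence step is one line.
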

\begin{proof}
By the multiplication table of~$\mathbf{T}_2$, it is easy see that
$\mathbf{T}_2$ satisfies the polynomial identities~\eqref{T-ideal(T2)}.
Denote by $\I$ the T-ideal generated by polynomial identities~\eqref{T-ideal(T2)}. The minimality of generating set~\eqref{T-ideal(T2)} for the T-ideal $\I$ is obvious. It is clear that $\CCX/\mathcal{I}$
is spanned by the image of set~\eqref{span-T2}.  Since $\I\subseteq \Id(\mathbf{T}_2)$, the vector space
$\CCX/\Id(\mathbf{T}_2)$ is spanned by the image of set~\eqref{span-T2}. The linear independence of the image of set~\eqref{span-T2} in $\CCX/\Id(\mathbf{T}_2)$ follows from part (a) of Remark~\ref{remark_key} and  $e_1^2\neq0$. The proof is concluded by part (b) of Remark~\ref{remark_key}.
\end{proof}

%-----------------------------------------------------------------
\begin{prop}\label{prop-N1-N2-N3} An algebra $\mathcal{A}\in \{\mathbf{N}_1, \mathbf{N}_2,\mathbf{N}_3\}$ is commutative and associative. The ideal $\Id(\mathcal{A})$ is minimally generated, as a T-ideal, by the polynomials
\begin{equation}\label{T-ideal-N123}
    [x_1, x_2] \quad \text{and} \quad
(x_1, x_2,x_3).
\end{equation}
Moreover, the image of the following subset of $\CCX$ 
\begin{equation}\label{span-N123}
    x_{i_1}x_{i_2} \cdots x_{i_n} \quad (n\geqslant 1,\;\; 1\leqslant i_1\leqslant \cdots \leqslant i_n)
\end{equation}
in the relatively free algebra $\CCX/\Id(\mathcal A)$ is its basis.
\end{prop}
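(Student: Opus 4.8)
The plan is to exploit that all three algebras are commutative and associative, so that the desired T-ideal is precisely the one defining the free commutative associative algebra, whose monomial basis is classical. First I would settle the first assertion by direct inspection of the multiplication tables: in each of $\mathbf{N}_1,\mathbf{N}_2,\mathbf{N}_3$ one has $e_1e_2=e_2e_1$, so $\mathcal A$ is commutative, and a short check of products of basis triples (or the observation that $\mathbf{N}_1\cong\CC\oplus\CC$, that $\mathbf{N}_2$ adjoins a square-zero ideal to $\CC$, and that $\mathbf{N}_3\cong\CC[t]/(t^2)$ via $e_1\mapsto 1$, $e_2\mapsto t$) gives associativity. Hence $\mathcal A$ satisfies $[x_1,x_2]$ and $(x_1,x_2,x_3)$, so writing $\I$ for the T-ideal generated by \eqref{T-ideal-N123} we obtain $\I\subseteq\Id(\mathcal A)$.

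Next I would show that $\CCX/\I$ is spanned by the image of set~\eqref{span-N123}. Since $\I$ contains both the commutator and the associator, every product in $\CCX/\I$ is associative and commutative; thus an arbitrary monomial can be rewritten, by left-normalizing its brackets through $(x_1,x_2,x_3)$ and sorting its letters through $[x_1,x_2]$, into a left-bracketed product $x_{i_1}\cdots x_{i_n}$ with $1\leqslant i_1\leqslant\cdots\leqslant i_n$. Because $\I\subseteq\Id(\mathcal A)$, the same set spans $\CCX/\Id(\mathcal A)$ as well.

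The step that a priori looks like the obstacle is linear independence, which by Remark~\ref{remark_key}(a) is tested multidegree by multidegree; but here it collapses to a one-line computation. The key observation is that for a fixed multidegree $\un{\de}\in\NN^n$ there is \emph{exactly one} monomial in set~\eqref{span-N123}, namely the sorted product determined by $\un{\de}$. So the criterion of Remark~\ref{remark_key}(a) reduces to showing that each such single monomial is not a polynomial identity of $\mathcal A$. For this I would evaluate every variable at $e_1$: since $e_1e_1=e_1$ in all three algebras, an easy induction on the bracketing shows that any product of copies of $e_1$ equals $e_1\neq 0$, so the monomial is not an identity and the corresponding scalar must vanish. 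Remark~\ref{remark_key}(b) then promotes spanning-plus-independence to the equality $\Id(\mathcal A)=\I$ and simultaneously yields the basis claim.

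Finally, minimality of the generating set~\eqref{T-ideal-N123} is immediate. On one hand, every element of $\Id((x_1,x_2,x_3))$ has degree at least $3$, since substituting degree-$\geqslant 1$ polynomials into the associator and multiplying only raises degree; hence the degree-$2$ commutator $[x_1,x_2]$ is not a consequence of the associator. On the other hand, $(x_1,x_2,x_3)$ is not a consequence of $[x_1,x_2]$, because the free commutative algebra $\CCX/\Id([x_1,x_2])$ is not associative, so the associator is nonzero there.
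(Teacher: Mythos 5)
Your proposal is correct and follows essentially the same route as the paper: verify commutativity and associativity from the multiplication tables, observe that the T-ideal $\I$ generated by \eqref{T-ideal-N123} rewrites every monomial into a sorted left-normed product so that the image of \eqref{span-N123} spans $\CCX/\I$ and hence $\CCX/\Id(\mathcal A)$, then get linear independence via Remark~\ref{remark_key}(a) from the single-monomial-per-multidegree observation and the evaluation $e_1^n=e_1\neq 0$, and conclude with Remark~\ref{remark_key}(b). The only difference is that you supply explicit justifications (degree count for $[x_1,x_2]$, non-associativity of the free commutative algebra for $(x_1,x_2,x_3)$) for the minimality of the generating set, which the paper simply declares obvious.
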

\begin{proof} Assume 
$\mathcal{A}\in \{\mathbf{N}_1, \mathbf{N}_2,\mathbf{N}_3\}$.
By the multiplication table of $\mathcal{A}$, the algebra $\mathcal{A}$ is
commutative and associative. Let $\I$ be the T-ideal generated by polynomial identities~\eqref{T-ideal-N123}. The minimality of generating set~\eqref{T-ideal-N123} for the T-ideal $\I$ is obvious. Since the commutator and associator belong
to $\I$, we have that $\CCX/\I$ is spanned by the image of set \eqref{span-N123}.
Note that
$\I\subseteq \Id(\A)$, thus, 
$\CCX/\Id(\A)$ is also spanned by the image of set~\eqref{span-N123}.
The linear independence of the image of set~\eqref{span-N123} in $\CCX/\Id(\A)$
follows from part (a) of Remark~\ref{remark_key} and $e_1^n\neq0$. Therefore, the required statement follows from part (b) of Remark~\ref{remark_key}.
\end{proof}

%-----------------------------------------------------------------
\begin{prop}\label{prop-N4}
The ideal $\Id(\mathbf{N}_4)$
is minimally generated, as a T-ideal, by the polynomials
\begin{equation}\label{T-ideal-N4}
(x_1, x_2, x_3) \quad
\text{and}
\quad
    x_1[x_2, x_3].
\end{equation}
Moreover, the image of the following subset of $\CCX$ 
\begin{equation}
\label{span-N4}
  x_{i}x_{j_1} \cdots x_{j_{n-1}} \quad (n\geqslant 1,\;\; i\geqslant 1,\;\;  1\leqslant j_{1}\leqslant j_2 \leqslant \cdots \leqslant j_{n-1})
\end{equation}
in the relatively free algebra $\CCX/\Id(\mathbf{N}_4)$ is its basis.
\end{prop}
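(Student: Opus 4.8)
The plan is to follow the template of Propositions~\ref{prop-T2} and~\ref{prop-N1-N2-N3}. First I would verify that $\mathbf{N}_4$ satisfies the two polynomials in~\eqref{T-ideal-N4}. A routine check on the multiplication table shows that $\mathbf{N}_4$ is associative, so the associator $(x_1,x_2,x_3)$ is an identity; then $x_1[x_2,x_3]=x_1(x_2x_3)-x_1(x_3x_2)$ equals $(x_1x_2)x_3-(x_1x_3)x_2$ modulo the associator, and this vanishes by right commutativity~\eqref{right-commut}. Let $\I$ be the T-ideal generated by~\eqref{T-ideal-N4}; the above shows $\I\subseteq\Id(\mathbf{N}_4)$.

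Next I would prove that the image of~\eqref{span-N4} spans $\CCX/\I$. Since the associator lies in $\I$, the algebra $\CCX/\I$ is associative, so every monomial is congruent modulo $\I$ to a left-bracket word $(x_{i_1}\cdots x_{i_n})_L$. Because $\I$ is a T-ideal, substituting arbitrary monomials $u,v,w$ into $x_1[x_2,x_3]$ gives $u(vw)\equiv u(wv)\pmod{\I}$; taking $u$ to be a nonempty prefix, $v,w$ two consecutive letters at positions $k,k+1$ with $k\geqslant 2$, and then right-multiplying by the remaining suffix (legitimate since $\I$ is an ideal and the algebra is associative modulo $\I$), I may transpose any two adjacent letters among the positions $2,\dots,n$. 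Hence the letters after the first one can be sorted into nondecreasing order, which is exactly the shape of~\eqref{span-N4}. As $\I\subseteq\Id(\mathbf{N}_4)$, the image of~\eqref{span-N4} also spans $\CCX/\Id(\mathbf{N}_4)$.

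The crux is linear independence, for which I would use part~(a) of Remark~\ref{remark_key}. Fix a multidegree $\un{\de}$ and suppose $\sum_i\al_i f_i\in\Id(\mathbf{N}_4)$, where $f_i$ is the monomial of~\eqref{span-N4} of multidegree $\un{\de}$ whose first letter is $x_i$, so that $i$ ranges over the variables with $\de_i\geqslant 1$. The naive idea of substituting basis vectors is insufficient here, because the $f_i$ are genuinely non-multilinear and distinct $f_i$ can agree on every basis substitution; the right device is a one-parameter substitution. Since $e_1e_1=e_2e_1=0$, right multiplication by $e_1$ is the zero map on $\mathbf{N}_4$, while $e_1e_2=e_1$ and $e_2e_2=e_2$ show right multiplication by $e_2$ is the identity. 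For a fixed target variable $x_{i_0}$ I would set $x_{i_0}\mapsto te_1+e_2$ and $x_\ell\mapsto e_2$ for all $\ell\neq i_0$, with $t\in\CC$. Then right multiplication by each of the substituted elements $e_2$ and $te_1+e_2$ is the identity, so every left-bracket word evaluates to the value of its first letter; thus $f_i\mapsto e_2$ for $i\neq i_0$ and $f_{i_0}\mapsto te_1+e_2$. The relation becomes $\al_{i_0}\,t\,e_1+\bigl(\sum_i\al_i\bigr)e_2=0$ for all $t\in\CC$, forcing $\al_{i_0}=0$. Letting $i_0$ run over all present variables kills every coefficient.

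Having established spanning and linear independence, Remark~\ref{remark_key} yields simultaneously that the image of~\eqref{span-N4} is a basis of $\CCX/\Id(\mathbf{N}_4)$ and that $\Id(\mathbf{N}_4)=\I$. Finally, for minimality I would argue in multidegree $1^3$: the degree-$3$ multilinear consequences of a multilinear degree-$3$ generator are spanned by relabelings of its variables, and since $x_1[x_2,x_3]$ involves only right-bracket monomials $x_a(x_bx_c)$ whereas $(x_1,x_2,x_3)$ contains the left-bracket monomial $(x_1x_2)x_3$ with nonzero coefficient, neither polynomial in~\eqref{T-ideal-N4} is a consequence of the other. I expect the linear-independence step, precisely the replacement of basis substitutions by the parametric substitution $te_1+e_2$, to be the only genuinely delicate point.
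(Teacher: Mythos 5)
Your proposal is correct, and on the two routine parts (verification of the identities, spanning, minimality) it matches the paper's proof; the paper asserts the same minimality claim via the spans of $\Sym_3$-relabelings, which your left-bracket/right-bracket monomial observation justifies. The genuine difference is in the crux, linear independence. The paper follows its uniform template: it passes to the \emph{complete linearization} of the relation $\sum_i\al_i f_i$, uses the identity $x_1[x_2,x_3]$ to normalize the linearization into the combination $\de_1!\cdots\de_n!\sum_{r=1}^m\frac{1}{\de_{\un{\de}|r|}}\al_{\un{\de}|r|}\,x_rx_1\cdots x_{r-1}x_{r+1}\cdots x_m$, and then substitutes the basis vectors $e_1,e_2$ (with $e_1$ in one slot) into this multilinear identity. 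You instead evaluate the non-multilinear relation directly under the parametric substitution $x_{i_0}\mapsto te_1+e_2$, $x_\ell\mapsto e_2$, exploiting the fact that right multiplication by $e_2$, hence by $te_1+e_2$, is the identity operator on $\mathbf{N}_4$, so every monomial of~\eqref{span-N4} collapses to the value of its first letter. Both correctly sidestep the failure of naive basis substitutions (e.g.\ in multidegree $(2,2)$ the two spanning monomials agree on all basis substitutions, which is exactly the phenomenon of the paper's Example~\ref{example_1}). Your argument is shorter and avoids the linearization machinery (the $\un{\de}|r|$ bookkeeping and factorials) entirely; its drawback is that it is special to $\mathbf{N}_4$, whereas the paper's linearization template is reused essentially verbatim for $\mathbf{T}_3$, $\mathbf{N}_5$ and $\mathbf{N}_6^{\ell}$, where right multiplications by the substituted elements are not all the identity and no single substitution of your form separates the coefficients. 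One cosmetic remark: the parameter $t$ is superfluous — the single substitution $t=1$ already gives $\al_{i_0}e_1+\bigl(\sum_i\al_i\bigr)e_2=0$, forcing $\al_{i_0}=0$ by linear independence of $e_1,e_2$.
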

\begin{proof}
By the multiplication table of~$\mathbf{N}_4$, it is easy to see that
$\mathbf{N}_4$ satisfies the polynomial identities~\eqref{T-ideal-N4}.
Denote by $\I$ the T-ideal generated by polynomial identities~\eqref{T-ideal-N4}. The minimality of generating set~\eqref{T-ideal-N4} for the T-ideal $\I$ follows from the facts that the $\CC$-span of $(x_{\si(1)},x_{\si(2)},x_{\si(3)})$, $\si\in\Sym_3$, does not contain $x_1[x_2,x_3]$ and the $\CC$-span of $x_{\si(1)}[x_{\si(2)},x_{\si(3)}]$, $\si\in\Sym_3$, does not contain  $(x_1,x_2,x_3)$.

It is clear that $\CCX/\mathcal{I}$
is spanned by the image of set~\eqref{span-N4}.  Since $\I\subseteq \Id(\mathbf{N}_4)$, the vector space
$\CCX/\Id(\mathbf{N}_4)$ is spanned by the image of set~\eqref{span-N4}. 

To prove the linear independence of the image of set~\eqref{span-N4} in $\CCX/\Id(\mathbf{N}_4)$, by part (a) of Remark~\ref{remark_key} it is enough to show that for every multidegree $\un{\de}=(\de_1,\ldots,\de_n)$ with $\de_1,\ldots,\de_n>0$ and $m=|\un{\de}|>1$ if 
$$f(x_1,\ldots,x_n) = \sum_{i=1}^n \al_k\, \big(x_{i}x_{1}^{\de_1} \cdots \cdot x_{i-1}^{\de_{i-1}} x_i^{\de_i-1} x_{i+1}^{\de_{i+1}} \cdots x_n^{\de_n}\big)_{L}\, \in \Id(\mathbf{N}_4),$$
where $\al_1,\ldots,\al_n\in\CC$, then 
\begin{eq}\label{eq_al}
\al_1=\cdots=\al_n=0.
\end{eq}%
Applying polynomial identity $x_1[x_2,x_3]$ to the complete linearization of $f$, we obtain the following multilinear polynomial identity for $\mathbf{N}_4$ of multidegree $1^m$:
$$h(x_1,\ldots,x_m)=\de_1 ! \cdots \de_{n}!\sum_{r=1}^m \frac{1}{\de_{\un{\de}|r|}} \al_{\un{\de}|r|}\, 
x_r x_1\cdots x_{r-1} x_{r+1} \cdots x_m.$$

\noindent{}Claim~\eqref{eq_al} follows from equalities 
%$$h(e_2,\ldots,e_2)=\de_1 ! \cdots \de_{n}!(\al_1+\cdots+\al_n)=0,$$
$$h(e_2,\ldots,e_2,\underbrace{e_1}_{r^{\rm th}\,\text{position}},e_2,\ldots,e_2)=
\de_1 ! \cdots \de_{n}! \frac{1}{\de_{\un{\de}|r|}} \, \al_{\un{\de}|r|}\, e_1=0$$% 
for all $1\leqslant r\leqslant m$, since $e_1e_2^{m-1}=e_1$ and $e_2^{i} e_1 e_2^{m-i-1}=0$ in case $1\leqslant i<m$. The proof is concluded by part (b) of Remark~\ref{remark_key}.
\end{proof}

The following remark shows that the straightforward approach to obtain a basis of a relatively free algebra from a basis of its multilinear components does not work in general. That is why we have to consider a non-multilinear case in the proof of Proposition~\ref{prop-N4} as well as in the proofs of Theorems~\ref{theorem-T3}, \ref{theorem-N6}, \ref{theorem-N5} (see below).

%-----------------------------------------------------------------
\begin{example}\label{example_1}
Consider the T-ideal $\I$ generated by $(x_1,x_2,x_3)$ and $x_1^3$. Then a basis of a multilinear component of $L=\CCX / \I$ of multidegree $1^4$ is
\begin{eq}\label{eq_basis}
\begin{array}{l}
x_1x_2x_3x_4,\; x_1x_2x_4x_3,\; x_1x_3x_2x_4,\; x_1x_3x_4x_2\;, x_1x_4x_2x_3, \\
x_2x_1x_3x_4,\; x_2x_1x_4x_3,\; x_2x_3x_1x_4,\; x_2x_3x_4x_1,\; x_2x_4x_1x_3, \\
x_3x_1x_2x_4,\; x_3x_4x_1x_2. \\
\end{array}
\end{eq}%
\noindent{}(see Proposition 2 of~\cite{Lopatin_2005}). How can we try to obtain a basis of multihomogeneous component $L_{(31)}$ of $L$ of multidegree $(3,1)$ from basis~\eqref{eq_basis}? The straightforward approach is to make substitutions $x_i\to x_1$, $x_4\to x_2$ ($1\leqslant i\leqslant 3$) in basis~\eqref{eq_basis} and then eliminate elements, which appear more than one time. Then the result will be
$$x_1^3 x_2,\; x_1^2 x_2 x_1,\; x_1 x_2 x_1^2,$$
but a basis for $L_{(31)}$ is $\{x_1^2 x_2 x_1\}$ (see Proposition 1 of~\cite{Lopatin_2005}). 
\end{example}

%=================================================================
%=================================================================
\section{Nonassociative case: Algebra $\mathbf{T}_3$}\label{section-nonassociative-T3}

Recall that the multiplication on the algebra $\mathbf{T}_3$ is defined as follows:
\[
\mathbf{T}_3:  e_1 e_1=0,\qquad
e_1 e_2=0,  \qquad
e_2 e_1=-e_1, \qquad
e_2 e_2=0.
\]
Note that the algebra $\mathbf{T}_3$ is not associative, since $(e_2,e_2,e_1)=-e_1\neq0$.

%-----------------------------------------------------------------
\begin{theorem}\label{theorem-T3}
The ideal $\Id(\mathbf{T}_3)$ is minimally generated, as a T-ideal, by the polynomials
\begin{equation}
\label{T-ideal-T3}
    (x_1 x_2) x_3 \quad
\text{and}
\quad
    x_1(x_2x_3)-x_2(x_1x_3).
\end{equation}
Moreover, the image of the following subset of $\CCX$ 
\begin{equation}
\label{span-T3}
    (x_{j_{n-1}}\cdots x_{j_1}x_{i})_{R}  \quad (\text{where}\;\; n\geqslant 1,\;\; i\geqslant 1,\;\; 
    1\leqslant j_{1}\leqslant j_2 \leqslant \cdots \leqslant j_{n-1})
\end{equation}
in the relatively free algebra $\CCX/\Id(\mathbf{T}_3)$ is its basis.
\end{theorem}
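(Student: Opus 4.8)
The plan is to follow the template already established in Propositions~\ref{prop-T2}--\ref{prop-N4}, using Remark~\ref{remark_key} as the organizing principle. First I would verify directly from the multiplication table of $\mathbf{T}_3$ that both generators in~\eqref{T-ideal-T3} are genuine polynomial identities: the identity $(x_1x_2)x_3$ holds because any left-normed product of length three vanishes (since $e_2e_1=-e_1$ but $(e_2e_1)e_2=-e_1e_2=0$, $(e_2e_1)e_1 = -e_1e_1=0$, etc., so every product $(ab)c$ lands in the span of $e_1$ and is killed by a further multiplication), and the second identity $x_1(x_2x_3)-x_2(x_1x_3)$ is checked on basis elements. Let $\I=\Id((x_1x_2)x_3,\, x_1(x_2x_3)-x_2(x_1x_3))$; then $\I\subseteq\Id(\mathbf{T}_3)$. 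Minimality of the generating set is argued as in Proposition~\ref{prop-N4}: the first generator is multilinear of total degree $3$ with all products left-normed, while the second is a genuine right-normed relation, so neither lies in the T-ideal generated by the other (one can separate them by noting that $x_1(x_2x_3)-x_2(x_1x_3)$ is not a consequence of the single left-normed monomial, and conversely $(x_1x_2)x_3$ cannot be produced from the right-symmetry relation alone since all consequences of the latter are combinations of right-normed products of a fixed shape).

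Next I would show that $\CCX/\I$ is spanned by the image of the set~\eqref{span-T3}. The relation $(x_1x_2)x_3\equiv 0$ forces every monomial containing any left-normed bracketing of depth $\geq 2$ at an inner position to vanish, so modulo $\I$ only right-normed monomials $(x_{k_{n-1}}\cdots x_{k_1}x_i)_R$ survive. The second relation $x_1(x_2x_3)\equiv x_2(x_1x_3)$ says that the two outermost left factors of a right-normed product may be transposed; iterating this, all the left factors $x_{k_{n-1}},\ldots,x_{k_1}$ can be sorted into nondecreasing order $1\leqslant j_1\leqslant\cdots\leqslant j_{n-1}$ while the innermost letter $x_i$ stays fixed. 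Here I must be careful that the transposition relation only swaps \emph{adjacent outer} factors and that repeated application indeed generates the full symmetric group acting on the multiset of left factors; a short induction on $n$ confirms that any right-normed monomial is congruent modulo $\I$ to one in the sorted form~\eqref{span-T3}. Since $\I\subseteq\Id(\mathbf{T}_3)$, the quotient $\CCX/\Id(\mathbf{T}_3)$ is likewise spanned by the image of~\eqref{span-T3}.

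The substantive step is linear independence of the image of~\eqref{span-T3} in $\CCX/\Id(\mathbf{T}_3)$, which by part~(a) of Remark~\ref{remark_key} reduces to a single multidegree $\un{\de}=(\de_1,\ldots,\de_n)$ at a time. The key observation is that for a fixed multidegree there is essentially \emph{one} element of~\eqref{span-T3}: the left factors are completely determined as a sorted multiset by $\un{\de}$, with the sole remaining freedom being the choice of the innermost letter $x_i$ (with $\de_i>0$). So I expect to evaluate each candidate monomial on suitable substitutions $e_1,e_2$ and exploit that in $\mathbf{T}_3$ the only nonzero product is $e_2e_1=-e_1$. Concretely, for the right-normed monomial whose innermost letter is $x_i$, I would substitute $x_i\mapsto e_1$ and every other variable $\mapsto e_2$; then the innermost product $e_2e_1=-e_1$ is reproduced at each stage, yielding a nonzero multiple of $e_1$, whereas substituting $e_1$ into any of the outer positions makes the whole product vanish (since $e_1$ can never appear as a right factor in a nonzero product). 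This separates the distinct innermost-letter choices and forces all coefficients to be zero, giving independence. The main obstacle, as in Proposition~\ref{prop-N4}, is the bookkeeping in the non-multilinear case: I will need the complete-linearization argument together with the combinatorial function $\un{\de}|\cdot|$ to organize the coefficients, and to confirm that after linearizing, the evaluations above isolate each $\al_i$ cleanly. The proof then concludes by part~(b) of Remark~\ref{remark_key}.
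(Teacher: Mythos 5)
Your proposal follows essentially the same route as the paper's proof: verify the two identities, argue minimality via the multilinear degree-$3$ spans of permuted generators, span the quotient by killing all products with a compound left factor and sorting the outer factors of right-normed monomials, and prove independence one multidegree at a time (via Remark~\ref{remark_key}) by completely linearizing and evaluating with $e_1$ in one position and $e_2$ elsewhere, exactly as the paper does. One small slip worth fixing: your parenthetical ``since $e_1$ can never appear as a right factor in a nonzero product'' is backwards --- the unique nonzero product $e_2e_1=-e_1$ has $e_1$ precisely as its \emph{right} factor, and the correct justification is that $e_1$ annihilates everything as a \emph{left} factor (while $e_2$ as a right factor always gives $0$); the vanishing you conclude from it is nonetheless correct.
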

\begin{proof}By the multiplication table of~$\mathbf{T}_3$, it is easy to see that
$\mathbf{T}_3$ satisfies the polynomial identities~\eqref{T-ideal-T3}. Denote by $\I$ be the T-ideal of $\CCX$ generated by set~\eqref{T-ideal-T3}.  The minimality of generating set~\eqref{T-ideal-T3} for T-ideal $\I$ follows from the facts that the $\CC$-span of $(x_{\si(1)}x_{\si(2)})x_{\si(3)}$, $\si\in\Sym_3$, does not contain $x_1(x_2x_3)-x_2(x_1x_3)$ and the $\CC$-span of $x_{\si(1)}(x_{\si(2)}x_{\si(3)})-x_{\si(2)}(x_{\si(1)}x_{\si(3)})$, $\si\in\Sym_3$, does not contain  $(x_1x_2)x_3$.

Since for every monomials $u,v\in\CCX$ with $\deg(u)>1$ the product $uv$ is zero in $\CCX/\mathcal{I}$, then it is easy to see that $\CCX/\mathcal{I}$ is spanned by  the image of set~\eqref{span-T3}. Since $\I\subseteq \Id(\mathbf{T}_3)$, the vector space
$\CCX/\Id(\mathbf{T}_3)$ is spanned by the image of set~\eqref{span-T3}. 

To prove the linear independence of the image of set \eqref{span-T3} in $\CCX/\Id(\mathbf{T}_3)$, by part (a) of Remark~\ref{remark_key} it is enough to show that for every multidegree $\un{\de}=(\de_1,\ldots,\de_n)$ with $\de_1,\ldots,\de_n>0$ and $m=|\un{\de}|>1$ if 
%$$
%f(x_1,\dots,x_n)=
%\sum_{k=1}^{n}\alpha_{k}\,
%x_{n}^{\de_n}\left( \cdots x_{k+1}^{\de_{k+1}}  (x_k^{\de_k}(x_{k-%1}^{\de_{k-1}} \cdots  (x_{2}^{\de_2}
%(x_1^{\de_1}x_{k}))\cdots))\cdots\right)  \in \Id(\mathbf{T}_3),
%
\begin{eq}
f(x_1,\dots,x_n)=
\sum_{i=1}^{n}\alpha_{i}\,
\big(x_{n}^{\de_n} \cdots x_{i+1}^{\de_{i+1}}  x_i^{{\de_i-1}}x_{i-1}^{\de_{i-1}} \cdots  x_{2}^{\de_2}x_1^{\de_1}x_{i}\big)_{R}  \in \Id(\mathbf{T}_3),
\end{eq}
where $\al_1,\ldots,\al_n\in\CC$, then 
\begin{eq}\label{eq_al_T3}
\al_1=\cdots=\al_n=0.
\end{eq}%

\noindent{}Applying polynomial identity $ x_1(x_2x_3)-x_2(x_1x_3)$ to the complete linearization of $f$, we obtain the following multilinear polynomial identity for $\mathbf{T}_3$ of multidegree $1^m$:
$$h(x_1,\ldots,x_m)=\de_1 ! \cdots \de_{n}!\sum_{r=1}^m \frac{1}{\de_{\un{\de}|r|}} \al_{\un{\de}|r|}\, 
(x_{m} \cdots x_{r+1} x_{r-1} \cdots  x_{2}x_1 x_{r})_{R}.$$
%$$h(x_1,\ldots,x_m)=\de_1 ! \cdots \de_{n}!\sum_{r=1}^m \frac{1}%{\de_{\un{\de}|r|}} \al_{\un{\de}|r|}\, 
%x_{m}\left( \cdots x_{r+1} (x_{r-1} \cdots  (x_{2}(x_1 x_{r}))\cdots)\cdots\right).$$

\noindent{}Claim~\eqref{eq_al_T3} follows from equalities
$$h(e_2,\ldots,e_2,\underbrace{e_1}_{r^{\rm th}\,\text{position}},e_2,\ldots,e_2)= \pm\,
\de_1 ! \cdots \de_{n}! \frac{1}{\de_{\un{\de}|r|}} \, \al_{\un{\de}|r|}\, e_1=0$$% 
for all $1\leqslant r\leqslant m$, since $(e_2^{m-1}e_1)_{R}=\pm e_1$ and $e_1e_2=0$. The proof is concluded by part (b) of Remark~\ref{remark_key}.
\end{proof}

%=================================================================
%=================================================================
\section{Nonassociative case: algebra $\mathbf{N}^{\ell}_6$}\label{section-nonassociative-N6}

Recall that the algebra $\mathbf{N}^{\ell}_6$
has the following multiplication table:
\[
\mathbf{N}^{\ell}_6:  e_1 e_1=0, \qquad e_1 e_2=e_1, \qquad e_2 e_1=\ell e_1,  \qquad e_2 e_2=e_2, \qquad \ell \neq 0,1.
\]

\noindent{}In this section we will prove the following result.

%-----------------------------------------------------------------
\begin{theorem}\label{theorem-N6}
\noindent{1.}    The ideal $\Id(\mathbf{N}^{\ell}_6)$ is minimally generated, as a T-ideal, by the polynomials
\begin{eq} \label{eq_15}
(x_1 x_2) x_3-(x_1 x_3) x_2, 
\end{eq}
\vspace{-0.8cm}
\begin{eq}\label{eq_16}
(x_1, x_2, x_3)-(x_2, x_1, x_3), 
\end{eq}
\vspace{-0.8cm}
\begin{eq} \label{eq_17}
(x_1,x_2,x_3)-(x_1,x_3,x_2)-\ell[x_3,x_2]x_1.
\end{eq}
\vspace{-0.8cm}

\medskip
\noindent{2.} 
Let $\B$ be the following subset of
$\CCX:$
\begin{enumerate}
    \item[(a)] $x_{i} x_{j_1}x_{j_2} \cdots x_{j_{n-1}} \qquad\qquad\;\, (n\geqslant 1,\;\;  1\leqslant j_{1}\leqslant j_2 \leqslant \cdots \leqslant j_{n-1},\;\; i\geqslant1)$,
    \item[(b)] $(x_{j_1} (x_{j_2} x_{j_3})) x_{j_4} \cdots  x_{j_n} \quad\quad (n\geqslant 3,\;\; 1\leqslant j_{1}\leqslant j_2 \leqslant \cdots \leqslant j_{n})$. 
    \end{enumerate}
\noindent{} Then the image $\ov{\B}$ of $\B$ in the relatively free algebra
$\CCX/\Id(\mathbf{N}^{\ell}_6)$ is its basis.
\end{theorem}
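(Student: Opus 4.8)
The plan is to follow the template of Proposition~\ref{prop-N4} and Theorem~\ref{theorem-T3}: verify that~\eqref{eq_15},~\eqref{eq_16},~\eqref{eq_17} are identities, let $\I$ be the T-ideal they generate, prove minimality, show $\CCX/\I$ is spanned by $\ov{\B}$, establish linear independence of $\ov{\B}$ in $\CCX/\Id(\mathbf{N}_6^{\ell})$, and conclude by Remark~\ref{remark_key}. Since $\mathbf{N}_6^{\ell}$ is Novikov, polynomials~\eqref{eq_15} and~\eqref{eq_16} are exactly the defining identities~\eqref{right-commut} and~\eqref{left-symm}, hence hold automatically. For~\eqref{eq_17} I would first simplify it modulo~\eqref{eq_15}: right-commutativity gives $(x_1,x_2,x_3)-(x_1,x_3,x_2)=x_1[x_3,x_2]$, so~\eqref{eq_17} is equivalent in $\CCX/\I$ to the \emph{commutator relation} $x_1[x_3,x_2]=\ell[x_3,x_2]x_1$, which a one-line check on the table ($[e_1,e_2]=(1-\ell)e_1$, $e_2e_1=\ell e_1$, $e_1e_1=0$) confirms. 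For later use I record the three working rules in $\CCX/\I$: right-commutativity lets the tail of a left-normed product be permuted freely; left symmetry gives $a(bc)=b(ac)+[a,b]c$; and~\eqref{eq_17} gives $a[b,c]=\ell[b,c]a$.

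For \emph{minimality} I would work in multidegree $1^3$, where the space of multilinear polynomials in $x_1,x_2,x_3$ is $12$-dimensional (two bracketings times $3!$ orderings). As all three generators are multilinear of degree $3$, the $1^3$-component of the T-ideal generated by any subset is the $\Sym_3$-span of that subset, so minimality reduces to finite linear-algebra checks that none of the three lies in the $\Sym_3$-span of the other two (e.g.~\eqref{eq_15} uses only the $(x_ix_j)x_k$-bracketing, and a short coefficient comparison separates~\eqref{eq_16} from~\eqref{eq_17}). For \emph{spanning} I would prove by induction on degree that modulo $\I$ every monomial lies in the span of $\B$: for $w=uv$ with $\deg v=1$ the inductive hypothesis on $u$ plus right-commutativity keeps $w$ of type (a) or (b); the essential case $\deg v\geqslant 2$ is handled by repeatedly applying $a(bc)=(ab)c-(a,b,c)$, left symmetry, and the commutator relation to peel nested brackets off the right and off the interior, lowering the total right-nesting until only a single far-left pair $(x_{j_2}x_{j_3})$ remains, which is type (b). I expect this reduction to be the main obstacle: the collapse to just two families genuinely requires~\eqref{eq_17} (the free Novikov components grow), and the delicate point is organizing the tree surgery so that it terminates rather than merely cycling bracketings such as $(pq)(rs)$ and $a((bc)d)$, which the Novikov identities alone only permute into one another.

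The crux is \emph{linear independence}. Fix a multidegree $\un{\de}=(\de_1,\ldots,\de_n)$ with all $\de_i>0$ and $m=|\un{\de}|\geqslant 3$; the elements of $\B$ of this multidegree are the $n$ type-(a) monomials $a_k$ (one per leading letter $x_k$) and the unique type-(b) monomial $b_{\un{\de}}$. Assuming $\sum_k\al_k a_k+\be\,b_{\un{\de}}\in\Id(\mathbf{N}_6^{\ell})$, I would apply complete linearization and evaluate on the substitutions ``all $e_2$'' and ``a single $e_1$ in a slot of type $k$, the rest $e_2$''. Using that a left-normed product with one $e_1$ equals $e_1$ if the $e_1$ leads and $\ell e_1$ otherwise, and that $(y_a(y_by_c))\cdots$ with its single $e_1$ in the outer-left, inner-left, inner-right, or tail slot equals $e_1$, $\ell e_1$, $\ell^2 e_1$, $\ell e_1$ respectively, these evaluations give the system $A+\be=0$ and $(1-\ell)\tfrac{\al_k}{\de_k}+\ell A+\tfrac{S(k)}{\de_k}\be=0$ for each $k$, where $A=\sum_k\al_k$ and $S(k)$ is the total slot-weight over the type-$k$ slots of $b_{\un{\de}}$.

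Substituting $\be=-A$ and summing over $k$, the total weight $\sum_k S(k)=1+(m-2)\ell+\ell^2$ is independent of $\un{\de}$, and the system collapses to $A=\tfrac{A}{1-\ell}\bigl(1+(m-2)\ell+\ell^2-\ell m\bigr)=\tfrac{A}{1-\ell}(1-\ell)^2=(1-\ell)A$, so $\ell A=0$. Since $\ell\neq 0$ this forces $A=0$, hence $\be=0$ and then each $\al_k=0$, where $\ell\neq 1$ is exactly what permits dividing by $1-\ell$. (The degenerate multidegrees $m\leqslant 2$ have no type-(b) element and are handled as in the earlier propositions.) Linear independence then follows from part (a) of Remark~\ref{remark_key}, and part (b) gives $\Id(\mathbf{N}_6^{\ell})=\I$, which completes both claims of the theorem; it is pleasant that the hypotheses $\ell\neq 0,1$ enter the proof precisely at this final step.
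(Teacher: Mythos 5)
Your overall architecture matches the paper's (verify the identities, prove minimality via the $\Sym_3$-span reduction in multidegree $1^3$, span $\CCX/\I$ by $\B$, then prove independence by evaluations and invoke Remark~\ref{remark_key}), and two of the three substantive parts are done correctly. The minimality reduction is exactly the paper's Lemma~\ref{lemma_N6_min}, and your linear-independence computation is correct and in fact slightly cleaner than the paper's: you evaluate the complete linearization directly at ``all $e_2$'' and ``one $e_1$ in a slot of type $k$'', getting the system $A+\be=0$, $(1-\ell)\al_k/\de_k+\ell A+S(k)\be/\de_k=0$, whose weighted sum collapses to $\ell(1-\ell)A=0$; this handles an arbitrary multidegree in one step, whereas the paper first settles the multilinear case (formulas \eqref{eq_vi}, \eqref{eq_w}) and then reduces general multidegree to it via congruences. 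Your evaluation weights ($1$, $\ell$, $\ell^2$, $\ell$ for the four kinds of slots of the type-(b) monomial) agree with \eqref{eq_w}, and your use of $\ell\neq0,1$ is exactly where the paper uses it.

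The genuine gap is the spanning step, and you flag it yourself: you never establish that the rewriting procedure terminates, i.e.\ that every monomial is congruent modulo $\I$ to a combination of type (a) and (b) monomials. This is the technical core of the paper's proof and occupies Lemmas~\ref{lemma_identities_I}, \ref{lemma_N_claims} and \ref{lemma-N6}. Two concrete ingredients are missing from your sketch. First, the derived consequences \eqref{eq_21} and especially \eqref{eq_22}, $x_1(x_2,x_3,x_4)\equiv\ell(x_2,x_3,x_4)x_1$; the latter is obtained from Teichm\"uller's identity \eqref{eq_Teich} and is precisely what breaks the ``cycling'' of bracketings you worry about, since it converts a left multiplication into a right multiplication (which right commutativity can then absorb). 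It yields the explicit rewritings \eqref{eq_lemma_N6_4} and \eqref{eq_lemma_N6_5} of $x_1((x_2x_3)x_4)$ and $x_1(x_2(x_3x_4))$ into basis-shaped terms, and with these the induction of Lemma~\ref{lemma_N_claims} (cases on $\deg u$, $\deg v$) terminates by degree count rather than by a nesting-reduction argument. Second, even your ``easy'' case $v=x_i$ with $u$ of type (b) is not closed under right commutativity alone: the product $(x_{j_1}(x_{j_2}x_{j_3}))x_{j_4}\cdots x_{j_n}x_i$ requires sorting $x_i$ against the three inner letters $x_{j_1},x_{j_2},x_{j_3}$, which needs the congruences \eqref{eq_lemma_N6_1}--\eqref{eq_lemma_N6_3} coming from \eqref{eq_16}, \eqref{eq_17} and \eqref{eq_21}. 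Without the spanning statement, Remark~\ref{remark_key} cannot be applied at all (its hypothesis is that $\CCX/\I$ is spanned by $\B$), so neither $\Id(\mathbf{N}_6^{\ell})=\I$ nor the basis claim follows from what you have proved.
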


Introduce some notations. Elements of $\B$ from item (a) or (b), respectively, are said to be of {\it type} (a) or (b), respectively. We write $V$ for the $\CC$-span of $x_{i_1} \cdots x_{i_n}$ in $\CCX$ for all $n\geqslant1$ and $i_1,\ldots,i_n\geqslant1$. Denote by $W$ the $\CC$-span of $\B$ in $\CCX$. 
Let $\I$ be the T-ideal of $\CCX$ generated by polynomials~\eqref{eq_15}--\eqref{eq_17}. 
Introduce the following equivalence relation  $\equiv$ on $\CCX$, which is called the {\it congruence} modulo $\I$: for $f,g\in \CCX$ denote
$$
f\equiv g\quad\mathrm{if~and~only~if}\quad f+\I=g+\I.
$$%
Since polynomial~\eqref{eq_15} lies in $\I$, we obtain
\begin{equation}\label{eq_18}
(x_1 x_3) x_2 \equiv (x_1 x_2) x_3.
\end{equation}%

%----------------------------------------------------------------
\begin{lemma}\label{lemma_N6_min}
The generating set~\eqref{eq_15}--\eqref{eq_17} for the T-ideal $\I$ is minimal.
\end{lemma}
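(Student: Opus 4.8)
The plan is to prove minimality in the standard way: for each of the three generators \eqref{eq_15}, \eqref{eq_16}, \eqref{eq_17} I would show that it does not belong to the T-ideal generated by the remaining two. Since all three polynomials are multilinear of degree $3$ in $x_1,x_2,x_3$ and $\CC$ has characteristic zero, this reduces to a finite-dimensional linear-algebra problem. Let $P$ denote the $12$-dimensional space of multilinear polynomials of multidegree $1^3$ in $x_1,x_2,x_3$; a basis consists of the six left-bracketed monomials $(x_ax_b)x_c$ and the six right-bracketed monomials $x_a(x_bx_c)$, where $abc$ runs over the permutations of $123$. The key reduction is that the multidegree-$1^3$ component of the T-ideal generated by a family of multilinear degree-$3$ polynomials equals the $\CC$-span of all their images under permutations of the variables: any substitution keeping the total degree equal to $3$ must be linear, no outer multiplication is possible without raising the degree, and extracting the multidegree-$1^3$ component of a linear substitution yields only linear combinations of permuted copies (this is where the infinitude of $\CC$ is used, just as for the partial linearizations introduced in Section~\ref{section_notations}). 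Hence it suffices to check, for each generator $g$, that $g$ lies outside the $\CC$-span of $\{h^{\si}\mid \si\in\Sym_3\}$, where $h$ ranges over the other two generators and $h^{\si}$ denotes $h$ with each $x_i$ replaced by $x_{\si(i)}$.

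To establish each of these three non-containments I would exhibit a linear functional $\varphi$ on $P$ that annihilates every permuted copy of the other two generators yet is nonzero on the target; by duality in the finite-dimensional space $P$ this certifies that the target is not in their span. Writing $\varphi$ through its values on the twelve basis monomials, the condition ``$\varphi$ vanishes on all $\si$-images of $h$'' becomes a small linear system (symmetry of certain coefficients in the first two, respectively the last two, arguments), which is easily solved while leaving the value on the target free. Concretely: to exclude \eqref{eq_15} one can take $\varphi$ equal to $1$ on each of $(x_1x_2)x_3,\,(x_2x_1)x_3,\,x_1(x_2x_3),\,x_2(x_1x_3)$ and $0$ on the remaining eight monomials; to exclude \eqref{eq_16} the functional reading the combined coefficient of $x_1(x_2x_3)$ and $x_1(x_3x_2)$ works; and to exclude \eqref{eq_17} the functional reading the combined coefficient of $x_1(x_2x_3)$ and $x_2(x_1x_3)$ works. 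In each case a direct check shows that $\varphi$ kills the full $\Sym_3$-orbit of the two ``other'' generators and is nonzero on the third.

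The step requiring the most care is the reduction itself, namely the assertion that no genuinely nonlinear substitution or outer multiplication contributes to the degree-$3$ multilinear part, so that ``being a consequence'' collapses to ``being in the permutation span''. Once this is in place, the rest is the mechanical bookkeeping of the $\Sym_3$-action on the twelve monomials and the verification that each $\varphi$ annihilates the two relevant orbits. It is worth noting that the three separating functionals above may be chosen independently of $\ell$: in the verification the terms carrying $\ell$ in \eqref{eq_17} always cancel, so minimality holds uniformly for every $\ell\in\CC\setminus\{0,1\}$. An equivalent and perhaps cleaner alternative I would keep in reserve is to realize each separating functional as an evaluation in a suitable small (not necessarily Novikov) algebra that satisfies the two ``other'' identities but not the target one.
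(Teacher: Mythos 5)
Your proposal is correct, and at the top level it follows the same strategy as the paper: minimality is reduced to three non-membership statements inside the space of multilinear polynomials of multidegree $1^3$, using the observation that the multidegree-$1^3$ component of the T-ideal generated by multilinear cubic polynomials is precisely the $\CC$-span of their $\Sym_3$-translates. (You justify this reduction explicitly; the paper uses it silently when it asserts that the lemma follows from its three claims about the spans $L_1,L_2,L_3$ of permuted copies.) The difference lies in how non-membership is certified. For each exclusion the paper passes to a quotient algebra in which one of the two remaining generators vanishes identically --- the free associative algebra $\CCX/\Id((x_1,x_2,x_3))$ when excluding \eqref{eq_15} and \eqref{eq_17}, and the relatively free algebra modulo the T-ideal of $(x_1x_2)x_3$ when excluding \eqref{eq_16} --- and then derives contradictory equations for the coefficients $\al_1,\al_2,\al_3$ by reading off two monomials. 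You instead stay in the $12$-dimensional space and exhibit explicit annihilating functionals; I checked all three and they work: for instance your first $\varphi$ takes the value $1-1-1+1=0$ on every $\Sym_3$-copy of \eqref{eq_16}, vanishes on every copy of \eqref{eq_17} (the $\ell$-terms either land on monomials where $\varphi=0$ or enter as $-\ell+\ell$), and equals $1$ on \eqref{eq_15} itself; the other two functionals check out similarly, uniformly in $\ell$. The two mechanisms are dual forms of the same finite-dimensional linear algebra --- comparing coefficients in a quotient is evaluating a functional that factors through that quotient, which is exactly your closing remark about realizing functionals as evaluations in small algebras --- so this is a difference of bookkeeping rather than of route: the paper's quotients shrink each verification to two coefficients, while your functionals avoid auxiliary algebras at the cost of checking all twelve translates. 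A minor bonus of your version is that the exclusion of \eqref{eq_17} never uses $\ell\neq 0$, whereas the paper's third claim does.
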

\begin{proof} Denote by $L_1\subset \CCX$ the $\CC$-span of  $(x_{\si(1)}x_{\si(2)})x_{\si(3)} - (x_{\si(1)}x_{\si(3)})x_{\si(2)}$, $\si\in\Sym_3$. Similarly, let $L_2$ be the $\CC$-span of $(x_{\si(1)}, x_{\si(2)}, x_{\si(3)})-(x_{\si(2)}, x_{\si(1)}, x_{\si(3)})$, $\si\in\Sym_3$, and let $L_3$ be the $\CC$-span of $(x_{\si(1)},x_{\si(2)},x_{\si(3)})-(x_{\si(1)},x_{\si(3)},x_{\si(2)})-\ell[x_{\si(3)},x_{\si(2)}]x_{\si(1)}$, $\si\in\Sym_3$.  Then the claim of the lemma follows from the following three claims.

\medskip
\noindent{\bf 1.} {\it The polynomial~\eqref{eq_15} does not belong to $L_2+L_3$.}

Assume the contrary. Then in the free associative algebra $\CCX / \Id( (x_1,x_2,x_3) )$ we have 
$$x_1x_2x_3 - x_1 x_3 x_2 = \al_1 [x_2,x_3] x_1 + \al_2 [x_1,x_3] x_2 + \al_3 [x_1,x_2]x_3$$
for some $\al_1,\al_2,\al_3\in\CC$. Considering the coefficients of $x_1x_2x_3$ and $x_2x_1x_3$, respectively, we obtain that $1=\al_3$ and $0=\al_3$, respectively; a contradiction.

\medskip
\noindent{\bf 2.} {\it The polynomial~\eqref{eq_16} does not belong to $L_1+L_3$.}

Assume the contrary. Then in the relatively free algebra $\CCX / \Id( (x_1,x_2)x_3 )$ we have 
$$-x_1 (x_2 x_3) + x_2 (x_1 x_3) = \al_1 x_1 [x_2,x_3] + \al_2 x_2 [x_1,x_3] + \al_3 x_3 [x_1,x_2]$$
for some $\al_1,\al_2,\al_3\in\CC$. Considering the coefficients of $x_1(x_2x_3)$ and $x_1(x_3x_2)$, respectively, we obtain that $-1=\al_1$ and $0=\al_1$, respectively; a contradiction.

\medskip
\noindent{\bf 3.} {\it The polynomial~\eqref{eq_17} does not belong to $L_1+L_2$.}

Assume the contrary. Since $\ell\neq0$, in the free associative algebra $\CCX / \Id( (x_1,x_2,x_3) )$ we have 
$$ [x_3, x_2] x_1 = \al_1 x_1 [x_2,x_3] + \al_2 x_2 [x_1,x_3] + \al_3 x_3 [x_1,x_2]$$
for some $\al_1,\al_2,\al_3\in\CC$. Considering the coefficients of $x_3 x_2 x_1$ and $x_3 x_1 x_2$, respectively, we obtain that $1=-\al_3$ and $0=\al_3$, respectively; a contradiction.
\end{proof}

%----------------------------------------------------------------
\begin{remark}\label{remark_newN6}
Assume that $i,j,k\in\{1,2\}$. Then $[e_i,e_j]\neq 0$ if and only if $\{i,j\}=\{1,2\}$; in this case, we have $[e_1,e_2]=-[e_2,e_1]=(1-l)e_1$. Similarly, $(e_i,e_j,e_k)\neq0$ if and only if $(i,j,k)=(2,2,1)$; in this case, we have $(e_2,e_2,e_1)=\ell(1-\ell)e_1$. In particular, $\mathbf{N}^{\ell}_6$ is nonassociative.
\end{remark}

%In this section we will prove the following result.

%-----------------------------------------------------------------
%\begin{theorem}
%\noindent $1.$
%    The ideal $\Id(\mathbf{N}^{\ell}_6)$ is generated, as a T-ideal, by the polynomials
%\begin{equation}\label{T-ideal-N6}
%\begin{gathered}
%\left(x_1 x_2\right) x_3-\left(x_1 x_3\right) x_2, \quad\left(x_1, x_2, x_3\right)-\left(x_2, x_1, x_3\right), \qquad
%x_1[x_2,x_3]=\ell[x_2,x_3]x_1, \\
%U\left(x_1, x_2, x_3, x_4\right):=\left(x_1\left(x_2 x_3\right)\right) x_4-\left(x_1\left(x_4 x_3\right)\right) x_2 .
%\end{gathered}
%\end{equation}
%\noindent $2.$
%    Let $\B$ be the following set from
%$\CCX:$
%\begin{enumerate}
%    \item[(a)] $x_i,$
%    \item[(b)]  $\left(\cdots((x_{i}x_{j_1}) x_{j_2})x_{j_3} \cdots \right)x_{j_{n}}  \quad (n\geqslant 1);$
%    \item[(c)] $(x_{j_1}, x_{j_2}, x_{i})R_{x_{j_3}} \ldots R_{x_{j_{n-1}}} \quad (n\geqslant3)$,
%\end{enumerate}
%where $1\leqslant i$ and
%$1< j_1< j_{2}<\ldots < j_{n}$.
%The set $\B$ is a basis for the relatively free algebra
%$\CCX/\Id(\mathbf{N}^{\ell}_6)$.
%\end{theorem}

%We consider the polynomial
%\begin{equation}\label{identity-U-N6}
%U(x_1,x_2,x_3,x_4)=(x_{1}(x_2x_3))x_4-(x_1(x_4x_3))x_2.
%\end{equation}
 
%-----------------------------------------------------------------
\begin{lemma}\label{lemma_identities-N6}
The algebra $\mathbf{N}^{\ell}_6$ satisfies polynomial identity~\eqref{eq_17}, i.e.,  $\I\subseteq  \Id(\mathbf{N}^{\ell}_6)$.
\end{lemma}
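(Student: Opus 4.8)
The plan is to reduce everything to a short finite check. Since $\mathbf{N}^{\ell}_6$ is a Novikov algebra by Theorem~\ref{theor-bai-meng}, it satisfies the two defining identities~\eqref{right-commut} and~\eqref{left-symm}, which are literally the generators~\eqref{eq_15} and~\eqref{eq_16} of $\I$; so those two require no verification. Thus it suffices to prove that~\eqref{eq_17} is an identity for $\mathbf{N}^{\ell}_6$, and this yields $\I\subseteq\Id(\mathbf{N}^{\ell}_6)$.

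Polynomial~\eqref{eq_17} is multilinear of multidegree $1^3$, so it is an identity if and only if it vanishes on every substitution $(x_1,x_2,x_3)=(e_a,e_b,e_c)$ with $a,b,c\in\{1,2\}$. Rather than run through all eight substitutions, I would use Remark~\ref{remark_newN6} to prune them: an associator $(e_i,e_j,e_k)$ is nonzero only for $(i,j,k)=(2,2,1)$, and $[e_i,e_j]$ is nonzero only for $\{i,j\}=\{1,2\}$. Reading this off the three summands of~\eqref{eq_17}, together with $e_1e_1=0$ and $e_1e_2=e_1$, the first associator contributes only when $(a,b,c)=(2,2,1)$, the second only when $(a,b,c)=(2,1,2)$, and the term $[e_c,e_b]e_a$ only when $\{b,c\}=\{1,2\}$ and $a=2$, i.e. again only for $(a,b,c)\in\{(2,2,1),(2,1,2)\}$. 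Every other substitution annihilates all three terms.

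Only the two surviving triples need to be evaluated. For $(a,b,c)=(2,2,1)$ I would use $(e_2,e_2,e_1)=\ell(1-\ell)e_1$, $(e_2,e_1,e_2)=0$ and $[e_1,e_2]e_2=(1-\ell)e_1$, so the value is $\ell(1-\ell)e_1-0-\ell(1-\ell)e_1=0$. For $(a,b,c)=(2,1,2)$ I would use $(e_2,e_1,e_2)=0$, $(e_2,e_2,e_1)=\ell(1-\ell)e_1$ and $[e_2,e_1]e_2=(\ell-1)e_1$, giving $0-\ell(1-\ell)e_1-\ell(\ell-1)e_1=0$. This completes the verification.

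There is no genuine obstacle here: the entire argument is the observation that multilinearity combined with Remark~\ref{remark_newN6} collapses the check to two cases. The only point demanding care is the bookkeeping of the factor $\ell$ arising from $e_2e_1=\ell e_1$ and of the signs in the two commutators $[e_1,e_2]$ and $[e_2,e_1]$; these are precisely what force the coefficient $\ell$ in~\eqref{eq_17} and make the associator and commutator contributions cancel in each case.
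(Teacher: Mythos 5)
Your proof is correct and follows essentially the same route as the paper: the paper also reduces the verification to a finite check on basis elements, using Remark~\ref{remark_newN6} together with the structure of $\CC e_1$ (null-product ideal) and $\CC e_2$ (idempotent subalgebra), and declares the remaining computation easy. You simply carry out explicitly the two surviving substitutions $(e_2,e_2,e_1)$ and $(e_2,e_1,e_2)$ that the paper leaves to the reader, and your evaluations and cancellations are accurate.
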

\begin{proof}
It is easy to see that $\CC$-$ \mathrm{span}\{e_1\}$ 
is an ideal of $\mathbf{N}_{6}^{\ell}$ with null product  and
$\CC$-$ \mathrm{span}\{e_2\}$ is a subalgebra of $\mathbf{N}_{6}^{\ell}$
generated by an idempotent element. In particular, the algebra $\CC$-$ \mathrm{span}\{e_2\}$  is associative, commutative with non-zero multiplication. It is easy to see that these observations together with Remark~\ref{remark_newN6} conclude the proof.
\end{proof}

%-----------------------------------------------------------------
\begin{lemma}\label{lemma_identities_I}
The following polynomials lay in the T-ideal $\I$:
\begin{align}
([x_1,x_2],x_3,x_4), \label{eq_19} 
\\
(x_{1}x_{2},x_{3},x_4) -  (x_1,x_3,x_4)x_2,\label{eq_20}
\\
 (x_1,x_2,x_3)x_4 - (x_4,x_2,x_3)x_1, \label{eq_21}
    \\
    x_1(x_2,x_3,x_4) - \ell(x_2,x_3,x_4)x_1. \label{eq_22}
\end{align}
\end{lemma}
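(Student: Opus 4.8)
The plan is to work throughout modulo $\I$, using the congruence $\equiv$ and the fact that $\I$ already contains the Novikov identities \eqref{eq_15} and \eqref{eq_16}. Consequently right commutativity \eqref{eq_18} holds modulo $\I$ — so every left-bracketed product $(x_{i_1}\cdots x_{i_n})_L$ is congruent to any rearrangement of its non-leading factors — and the associator is congruent-symmetric in its first two arguments. I would establish the four memberships in the order \eqref{eq_20}, \eqref{eq_19}, \eqref{eq_21}, \eqref{eq_22}, each one feeding the next.

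First, \eqref{eq_20} is pure right commutativity: expanding $(x_1x_2,x_3,x_4)-(x_1,x_3,x_4)x_2$ and using $((x_1x_2)x_3)x_4\equiv((x_1x_3)x_4)x_2$ together with $(x_1(x_3x_4))x_2\equiv(x_1x_2)(x_3x_4)$ makes all four monomials cancel in pairs. Next, for \eqref{eq_19} the key trick is to substitute the product $x_1x_2$ into the first slot of \eqref{eq_17}, giving $(x_1x_2,x_3,x_4)-(x_1x_2,x_4,x_3)\equiv\ell[x_4,x_3](x_1x_2)$; reducing the two left-slot associators by \eqref{eq_20} and then applying \eqref{eq_17} once more to the resulting associator difference, the factor $\ell$ cancels and what remains is exactly $([x_4,x_3]x_1)x_2\equiv[x_4,x_3](x_1x_2)$, i.e. $([x_4,x_3],x_1,x_2)\equiv0$. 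Then \eqref{eq_21} is immediate: reading \eqref{eq_20} backwards gives $(x_1,x_2,x_3)x_4\equiv(x_1x_4,x_2,x_3)$ and $(x_4,x_2,x_3)x_1\equiv(x_4x_1,x_2,x_3)$, whose difference is $([x_1,x_4],x_2,x_3)$, congruent to $0$ by \eqref{eq_19}.

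The hard part will be \eqref{eq_22}, since it must convert a leading left multiplication into a trailing right multiplication while producing the scalar $\ell$. I would expand $x_1(x_2,x_3,x_4)$ by repeatedly applying $x(yz)=(xy)z-(x,y,z)$ to strip the leading factor $x_1$, simplify the product-in-a-slot associators via \eqref{eq_20} and \eqref{eq_16}, and move the product out of the third slot using \eqref{eq_17}. After collecting and converting every \emph{associator times a variable} to the form $(\,\cdot\,,\,\cdot\,,\,\cdot\,)x_1$ by \eqref{eq_21} and right commutativity, the whole expression becomes $\ell(x_2,x_3,x_4)x_1+G\,x_1$, where $G$ is a degree-three element built from four associators and one term $\ell[x_3,x_2]x_4$. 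The remaining obstacle is then the single congruence $G\equiv0$, which I expect to follow cleanly from \eqref{eq_16} and \eqref{eq_17}: applying left symmetry to pair up the associators and \eqref{eq_17} to interchange the last two arguments turns $G$ into $\ell([x_2,x_3]+[x_3,x_2])x_4$, and the two commutators cancel. Since $\I$ is an ideal, $G\equiv0$ forces $G\,x_1\equiv0$, completing \eqref{eq_22}. Throughout, the delicate point is the bookkeeping of the commutator-and-product byproducts, and it is precisely \eqref{eq_19} — together with its consequence that an associator with a commutator in any slot is congruent to zero — that keeps these terms under control.
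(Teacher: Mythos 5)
Your proof is correct, and for three of the four memberships it is essentially the paper's own argument: \eqref{eq_20} is obtained from right commutativity alone by the same four-monomial cancellation; \eqref{eq_21} is the same chain $(x_1,x_2,x_3)x_4\equiv(x_1x_4,x_2,x_3)\equiv(x_4x_1,x_2,x_3)\equiv(x_4,x_2,x_3)x_1$ via \eqref{eq_20} and \eqref{eq_19}; and for \eqref{eq_22} you rederive Teichm\"uller's identity \eqref{eq_Teich} by stripping the leading factor rather than quoting it, after which your endgame (four associators plus the term $\ell[x_3,x_2]x_4$, killed by pairing with \eqref{eq_16} and \eqref{eq_17}) is exactly the paper's computation with its residual $\ell\bigl([x_2,x_3]x_4+[x_3x_4,x_2]\bigr)x_1$ regrouped so that $\ell(x_2,x_3,x_4)x_1$ is extracted first and the leftover $G$ is shown to vanish. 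The one genuinely different step is \eqref{eq_19}. The paper first derives the flip congruence $x_1[x_2,x_3]\equiv\ell[x_2,x_3]x_1$ (its congruence \eqref{eq_star}, from \eqref{eq_17} and \eqref{eq_18}) and then expands $([x_1,x_2],x_3,x_4)$ directly, moving the commutator past the other factors at the cost of $\ell^{-1}$ and finishing with right commutativity. You instead substitute the product $x_1x_2$ into the first slot of \eqref{eq_17}, pull $x_2$ out of both associators by \eqref{eq_20}, apply \eqref{eq_17} once more, and cancel the nonzero scalar $\ell$, obtaining $([x_4,x_3],x_1,x_2)\equiv0$, which gives \eqref{eq_19} since T-ideals are closed under renaming of variables. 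Both mechanisms use $\ell\neq0$ in an essential way; yours forces the order \eqref{eq_20} before \eqref{eq_19} (harmless, as \eqref{eq_20} depends only on \eqref{eq_15}) and never leaves associator language, while the paper's buys the standalone flip rule \eqref{eq_star}, a reusable one-line congruence that makes \eqref{eq_19} immediate.
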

\begin{proof} By congruence~\eqref{eq_18}, $(x_1,x_2,x_3)-(x_1,x_3,x_2)\equiv  - x_1[x_2,x_3]$. Hence, polynomial~\eqref{eq_17} implies that
\begin{eq}\label{eq_star}
x_1[x_2,x_3] \equiv \ell [x_2,x_3]x_1.
\end{eq}%

\noindent{}Consequently applying congruences~\eqref{eq_star} and~\eqref{eq_18}, we have $([x_1,x_2],x_3,x_4)=([x_1,x_2]x_3)x_4 - [x_1,x_2](x_3x_4) \equiv \ell^{-1}(x_3[x_1,x_2])x_4 - \ell^{-1} (x_3x_4)[x_1,x_2] \equiv 0$, i.e., polynomial~\eqref{eq_19} lies in the T-ideal $\I$.

It is well-known that polynomial identity~\eqref{eq_20} holds in every right commutative algebra. Namely, $0=\Big((x_1x_2)x_3)x_4 - ((x_1x_2)x_3)x_4 \Big) + \Big(  (x_1 x_2)(x_3x_4) - (x_1 x_2)(x_3x_4) \Big) \equiv
\Big( ((x_1x_2)x_3)x_4 -  (x_1 x_2)(x_3x_4)  \Big)  - \Big(  ((x_1x_3)x_4)x_2  -  (x_1 (x_3x_4))x_2 \Big)$ by congruence~\eqref{eq_18}. 

Consequently, applying polynomials~\eqref{eq_20},~\eqref{eq_19},~\eqref{eq_20}, we obtain 
 $(x_1,x_2,x_3)x_4 \equiv (x_1x_4,x_2,x_3)\equiv(x_4x_1,x_2,x_3)\equiv (x_4,x_2,x_3)x_1$, i.e., polynomial \eqref{eq_21} lies in $\I$.

The Teichm\"uller's identity (for example, see~\cite[pg. 343]{zhevlakov1982rings})
\begin{eq}\label{eq_Teich}
x_1(x_2,x_3,x_4)
=(x_1x_2,x_3,x_4)-(x_1,x_2x_3,x_4)+(x_1,x_2,x_3x_4) - (x_1,x_2,x_3)x_4
\end{eq}%
holds in any algebra, as a direct consequence of the definition of the associator. Applying polynomials from $\I$, we obtain
$$(x_1x_2,x_3,x_4)\stackrel{\eqref{eq_19}}{\equiv} (x_2x_1,x_3,x_4)\stackrel{\eqref{eq_20}}{\equiv} (x_2,x_3,x_4)x_1,$$
$$(x_1,x_2x_3,x_4) \stackrel{\eqref{eq_16}}{\equiv}  (x_2 x_3,x_1,x_4) \stackrel{\eqref{eq_20}}{\equiv}  
(x_2,x_1,x_4)x_3\stackrel{\eqref{eq_16}}{\equiv} (x_1,x_2,x_4)x_3 \stackrel{\eqref{eq_21},\eqref{eq_16}}{\equiv}
(x_2,x_3,x_4)x_1,$$
$$(x_1,x_2,x_3x_4) \stackrel{\eqref{eq_17}}{\equiv} (x_1,x_3x_4,x_2) + \ell [x_3x_4,x_2]x_1,$$
$$(x_1,x_2,x_3)x_4 \stackrel{\eqref{eq_21}}{\equiv} (x_4,x_2,x_3)x_1.$$%
Since
$$ (x_1,x_3x_4,x_2) \stackrel{\eqref{eq_16}}{\equiv} (x_3x_4, x_1,x_2) \stackrel{\eqref{eq_20}}{\equiv} (x_3,x_1,x_2)x_4 \stackrel{\eqref{eq_16}}{\equiv} (x_1,x_3,x_2)x_4 \stackrel{\eqref{eq_21}}{\equiv} $$ 
$$ (x_4,x_3,x_2)x_1 \stackrel{\eqref{eq_17}}{\equiv} (x_4,x_2,x_3)x_1 + \ell ([x_2,x_3]x_4) x_1,$$
identity~\eqref{eq_Teich} implies that polynomial~\eqref{eq_22} lies in the T-ideal $\I$.

\end{proof}

Applying congruence~\eqref{eq_18} we obtain that $V +\I \subset W +\I$ in $\CCX/ \I$. Moreover,  polynomials~\eqref{eq_16}, \eqref{eq_17}, \eqref{eq_21}, respectively, from $\I$ imply that 
\begin{equation}\label{eq_lemma_N6_1}
x_2(x_1x_3) \equiv x_1(x_2 x_3) + f_1,
\end{equation}
\begin{equation}\label{eq_lemma_N6_2}
x_1(x_3x_2) \equiv  x_1(x_2 x_3) + f_2,
\end{equation}
\begin{equation}\label{eq_lemma_N6_3}
(x_4(x_2x_3))x_1 \equiv  (x_1(x_2 x_3)) x_4 + f_3,
\end{equation}
respectively, for some multihomogeneous $f_1,f_2,f_3\in V$ with $\mdeg(f_1)=\mdeg(f_2)=1^3$ and $\mdeg(f_3)=1^4$. Adding polynomials $(x_1,x_2x_3,x_4) - (x_2x_3,x_1,x_4)$ and $(x_2x_3,x_1,x_4) - (x_2,x_1,x_4)x_3$ from $\I$ (see polynomials~\eqref{eq_16} and~\eqref{eq_20}), we obtain that
\begin{equation}\label{eq_lemma_N6_4}
x_1((x_2 x_3) x_4)\equiv  (x_1(x_2x_3))x_4 - ((x_2x_1)x_4)x_3 + (x_2(x_1x_4))x_3.
\end{equation}
Polynomial~\eqref{eq_22} from $\I$ implies that 
$$x_1(x_2(x_3 x_4)) \equiv x_1((x_2 x_3)x_4) - \ell ((x_2 x_3)x_4) x_1 + \ell (x_2(x_3x_4))x_1.
$$
Applying congruence~\eqref{eq_lemma_N6_4} together with congruence~\eqref{eq_18} to the above congruence, we obtain 
\begin{equation}\label{eq_lemma_N6_5}
x_1(x_2(x_3 x_4)) \equiv  - (\ell+1) ((x_2 x_1)x_3) x_4 + (x_1(x_2x_3))x_4  + (x_2(x_1x_4))x_3  + \ell (x_2(x_3x_4))x_1.
\end{equation}

%-----------------------------------------------------------------
\begin{lemma}\label{lemma_N_claims}
Assume that $n\geqslant 2$ and for each monomial $w\in \CCX$ with $\deg(w)<n$ we have that $w\equiv w_0$ for some homogeneous $w_0\in W$ with $\deg(w)=\deg(w_0)$.  Let $u,v\in\CCX$ be monomials with $\deg(uv)= n$. Then 
$$uv\equiv h \text{ for some homogeneous }h\in W\text{ with }  \deg(h)=n $$% 
\noindent{}if one of the following conditions holds:
\begin{enumerate}
\item[(a)] $v=x_i$ for some $i\geqslant1$;

\item[(b)]  $\deg(u)\geqslant4$;

\item[(c)] $\deg(u)=3$;

\item[(d)] $\deg(u)=2$;

\item[(e)] $u=x_i$ for some $i\geqslant1$.
\end{enumerate}
\end{lemma}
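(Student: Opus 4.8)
The plan is to read this lemma as the inductive step in the proof that $\CCX/\I$ is spanned by $\ov{\B}$: granting the hypothesis (every monomial of degree $<n$ is congruent to a homogeneous element of $W$), we must push a single product $uv$ of degree $n$ into $W$. Since $\deg(u),\deg(v)<n$, the hypothesis lets me first replace $u$ and $v$ by congruent elements of $W$ and expand bilinearly; thus it suffices to treat $u$ and $v$ ranging over the basis monomials in $\B$, and the five cases simply organize this by $\deg(u)$. Throughout I will use three kinds of rewriting moves: right commutativity \eqref{eq_18}, which sorts the tail of any left-normed product and already yields $V\subseteq W+\I$; the congruences \eqref{eq_lemma_N6_1} and \eqref{eq_lemma_N6_2}, which sort the three indices of a leading triple $x_a(x_bx_c)$ modulo $V$; and the derived associator identities \eqref{eq_19}--\eqref{eq_22} of Lemma~\ref{lemma_identities_I}, which, being elements of the T-ideal $\I$, may be applied after substituting arbitrary monomials for their variables.

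I would settle case (e), $u=x_i$, first, as it is the base to which the others descend. For $\deg(v)\le 2$ the product is already of type (a) or becomes type (b) after sorting the triple by \eqref{eq_lemma_N6_1}, \eqref{eq_lemma_N6_2}. For $\deg(v)=3$ the congruences \eqref{eq_lemma_N6_4} (for $v$ of type (a)) and \eqref{eq_lemma_N6_5} (for $v$ of type (b)) rewrite $x_iv$ as an explicit combination of elements of $W$. For $\deg(v)\ge4$ I peel the last letter off $v$, writing $v=v'x_t$, and apply the substituted form of \eqref{eq_lemma_N6_4}: every resulting term is then either fully left-normed, hence in $V\subseteq W+\I$, or ends in a single letter with an inner factor of degree $<n$, which the induction hypothesis places in $W$, so that case (a) finishes it.

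For the remaining cases $\deg(u)\ge2$ I would run a secondary induction on $\deg(u)$, aiming to strip a letter off the left factor until it becomes a single letter. Factoring $v=v_1v_2$ and $u=\hat u_1\hat u_2$ at their outermost products, the associator gives $uv=(uv_1)v_2-(u,v_1,v_2)$, and identity \eqref{eq_20} rewrites $(u,v_1,v_2)=(\hat u_1\hat u_2,v_1,v_2)\equiv(\hat u_1,v_1,v_2)\hat u_2$, whose first factor has degree $<n$ (induction hypothesis) and whose trailing factor is a letter whenever $u$ is not the degree-three type-(b) monomial (case (a)); identities \eqref{eq_21} and \eqref{eq_22} serve to relocate associators past letters when needed. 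In case (a) itself one reduces $u$, appends the letter $x_i$, and re-sorts; the only delicate point is a type-(b) $u$ to which a letter of small index is appended, which I would resolve by swapping it into the leading triple via \eqref{eq_lemma_N6_3} and re-sorting with \eqref{eq_lemma_N6_1}, \eqref{eq_lemma_N6_2}, all error terms landing in $V\subseteq W+\I$.

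The main obstacle is termination: I must exhibit a well-founded measure --- the total degree $n$, refined by $\deg(u)$ --- that every rewriting move strictly decreases or keeps within an already-settled case, so that the process cannot cycle. The genuinely awkward configuration is $\deg(u)\ge2$ together with $v$ the degree-three type-(b) monomial $x_{j_1}(x_{j_2}x_{j_3})$, whose inner bracketing is not stripped by \eqref{eq_20}; here the split $v=v_1v_2$ threatens to raise $\deg(u)$, and closing this subcase cleanly --- presumably through \eqref{eq_lemma_N6_5} together with the substituted identities \eqref{eq_21} and \eqref{eq_22} --- is where the argument will need the most care.
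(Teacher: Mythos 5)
Your cases (a) and (e) essentially reproduce the paper's argument: (a) is the sorting argument via \eqref{eq_18} and \eqref{eq_lemma_N6_1}--\eqref{eq_lemma_N6_3}, and (e) is the case analysis on $v$ driven by \eqref{eq_lemma_N6_4} and \eqref{eq_lemma_N6_5}. Two local slips there: first, you prove (e) before (a) while using (a) inside it, so the order of the cases must be swapped (harmless, since your (a) is self-contained); second, for $v=(x_{j_1}(x_{j_2}x_{j_3}))x_{j_4}$, i.e.\ type (b) of degree $4$, substituting $x_{j_2}x_{j_3}$ for $x_3$ in \eqref{eq_lemma_N6_4} produces terms such as $((x_{j_1}x_i)x_{j_4})(x_{j_2}x_{j_3})$ which end in a \emph{product}, not a letter, so your blanket claim that every resulting term "ends in a single letter" is false. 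The paper closes this subcase by invoking part (c); in your ordering part (c) is not yet available, though the terms can also be repaired directly with \eqref{eq_18} plus the induction hypothesis.

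The genuine gap is in cases (b)--(d), and you half-admit it yourself. Your plan there --- factor $v=v_1v_2$, expand $uv=(uv_1)v_2-(u,v_1,v_2)$, and strip letters off $u$ via \eqref{eq_20} --- is the wrong decomposition: the move $uv\mapsto(uv_1)v_2$ \emph{raises} the degree of the left factor, and the trailing factors $v_2$ and $\hat u_2$ need not be letters, so the measure you propose ("total degree refined by $\deg(u)$") does not decrease; you end by declaring the configuration $\deg(u)\geqslant 2$ with $v=x_{j_1}(x_{j_2}x_{j_3})$ "genuinely awkward" and leave it unresolved. That configuration is precisely what a complete proof must handle, so as written the argument for (b)--(d) is incomplete. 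The point you miss, which makes the paper's proof of these cases a one-liner, is that for $\deg(u)\geqslant 2$ one should decompose $u$, not $v$: every monomial $u\in\B$ of degree at least $2$ other than $x_i(x_jx_k)$ ends in a letter, $u=u_0x_t$, and right commutativity \eqref{eq_18} (with the whole monomial $v$ substituted for a variable, legitimate since $\I$ is a T-ideal) gives $uv=(u_0x_t)v\equiv(u_0v)x_t$; the hypothesis of the lemma applied to $u_0v$ (degree $n-1$) plus part (a) then finishes. The single remaining shape $u=x_i(x_jx_k)$ is treated the same way using \eqref{eq_lemma_N6_3}, whose error term lies in $V\subseteq W+\I$. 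No analysis of the structure of $v$, no associator identities \eqref{eq_20}--\eqref{eq_22}, and no termination argument are needed in these cases.
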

\begin{proof}

\medskip
\noindent{\bf (a)} Without loss of generality, we can assume that $u\in \B$. If $u$ has type (a), then congruence~\eqref{eq_18} concludes the proof.  If $u$ has type (b), then congruence~\eqref{eq_18} together with congruences~\eqref{eq_lemma_N6_1}, \eqref{eq_lemma_N6_2}, \eqref{eq_lemma_N6_3} conclude the proof of part (a).

\medskip
\noindent{\bf (b)} Without loss of generality, we can assume that $u\in \B$. Since $\deg(u)\geqslant4$, then $u=u_0 x_i$ for some monomial $u_0$ and $i\geqslant1$. Thus,  $uv=(u_0 x_i)v \equiv (u_0 v)x_i$ by congruence~\eqref{eq_18}.  Part (a) concludes the proof.

\medskip
\noindent{\bf (c)} Without loss of generality, we can assume that $u\in \B$. Since $\deg(u)=3$, then $u=(x_ix_j) x_k$ or $u=x_i (x_j x_k)$ for some $i,j,k\geqslant1$. In the first case,  $uv\equiv ((x_ix_j) v) x_k$ by congruence~\eqref{eq_18} and  part (a) concludes the proof. In the second case,  $uv\equiv (v (x_j x_k))x_i + f'$ by congruence~\eqref{eq_lemma_N6_3}, where $f'$ is the result of substitution $x_l\to v$ in some homogeneous $f\in V$ for some $l\geqslant1$, where $\deg(f)=4$. Applying congruence~\eqref{eq_18} together with part (a) to $f'$ and applying part (a) to $(v (x_j x_k))x_i$ we conclude the proof. 

\medskip
\noindent{\bf (d)} We have $u=x_i x_j$ for some $i,j\geqslant1$. Then $uv\equiv (x_i v) x_j$ by congruence~\eqref{eq_18} and part (a) concludes the proof. 

\medskip
\noindent{\bf (e)} Without loss of generality, we can assume that $v\in \B$.  Then, one of the following five possibilities holds.
\begin{enumerate}
\item[($\rm e_1$)] $v=(v' x_j) x_k$ for some monomial $v'\in\CCX$ and $j,k\geqslant1$. Hence, congruence~\eqref{eq_lemma_N6_4} implies that $x_i v \equiv (x_i(v'x_j))x_k - ((v'x_i)x_k)x_j + (v'(x_ix_k))x_j$. Finally, part (a) conclude the proof.

\item[($\rm e_2$)] $n=2$ and $v=x_j$ for some  $j\geqslant 1$. Then the required statement holds.

\item[($\rm e_3$)] $n=3$ and $v=x_j x_k$ for some  $j,k\geqslant 1$. Then $uv=x_i (x_j  x_k)$ and  congruences~\eqref{eq_lemma_N6_1}, \eqref{eq_lemma_N6_2} together with   congruence~\eqref{eq_18} conclude the proof.

\item[($\rm e_4$)] $n=4$ and $v=x_{j_1} (x_{j_2} x_{j_3})$ for some  $1\leqslant j_{1}\leqslant \cdots \leqslant j_3$. Then 
$x_iv = x_i(x_{j_1} (x_{j_2} x_{j_3}))$ and  congruence~\eqref{eq_lemma_N6_5} together with part (a) imply the required statement. 

\item[($\rm e_5$)] $n=5$ and $v=(x_{j_1} (x_{j_2} x_{j_3})) x_{j_4}$ for some  $1\leqslant j_{1}\leqslant\cdots \leqslant j_4$. Then 
$x_iv = x_i(x_{j_1} (x_{j_2} x_{j_3}))x_{j_4})$ and we apply congruence~\eqref{eq_lemma_N6_4}  to $x_iv$, where we consider $x_{j_2} x_{j_3}$ instead of $x_3$. Parts (a) and (c) conclude the proof.
\end{enumerate}
\end{proof}

%-----------------------------------------------------------------
\begin{lemma}\label{lemma-N6}
The algebra $\CCX/ \I $ is the $\CC$-span of the set $\{f + \I \,|\, f\in \B \}$.
\end{lemma}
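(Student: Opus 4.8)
The plan is to show that every monomial of $\CCX$ is congruent modulo $\I$ to a homogeneous element of $W$ of the same degree. Since the monomials span $\CCX$ and $W$ is by definition the $\CC$-span of $\B$, this immediately yields that $\{f+\I\,|\,f\in\B\}$ spans $\CCX/\I$, which is exactly the assertion of the lemma.

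First I would argue by induction on the degree $n$ of a monomial $w\in\CCX$. For the base case $n=1$ we have $w=x_i$ for some $i\geqslant1$, which is a type (a) element of $\B$ (with no factors $x_{j_r}$), so $w\in W$ and there is nothing to prove.

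For the inductive step, assume $n\geqslant 2$ and that every monomial of degree strictly less than $n$ is congruent to a homogeneous element of $W$ of the same degree; this is precisely the hypothesis of Lemma~\ref{lemma_N_claims}. Writing the given monomial $w$ of degree $n$ as its outermost product $w=uv$, with monomials $u,v$ satisfying $\deg(u),\deg(v)\geqslant1$ and $\deg(u)+\deg(v)=n$, I would split into cases according to $\deg(v)$ and $\deg(u)$ and invoke the corresponding item of Lemma~\ref{lemma_N_claims}. If $\deg(v)=1$ we apply item (a); otherwise $\deg(v)\geqslant2$ and $\deg(u)$ lies in exactly one of the ranges $1$, $2$, $3$, or $\geqslant4$, handled by items (e), (d), (c) and (b) respectively. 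In each case we conclude $w\equiv h$ for some homogeneous $h\in W$ with $\deg(h)=n$, which completes the induction and hence the proof.

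The only point requiring care — the mild ``obstacle'', since Lemma~\ref{lemma_N_claims} already carries all of the computational weight — is verifying that these five cases are exhaustive. This comes down to the observation that any decomposition $w=uv$ either has $\deg(v)=1$, or else has $\deg(v)\geqslant2$, in which case $\deg(u)$ (which is always at least $1$) falls into precisely one of the ranges $1$, $2$, $3$, $\geqslant4$ covered by items (e), (d), (c), (b). As these ranges partition all possibilities for $\deg(u)$, the case analysis is complete and the lemma follows.
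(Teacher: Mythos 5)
Your proof is correct and follows essentially the same route as the paper: induction on the degree of a monomial, writing $w=uv$ and invoking Lemma~\ref{lemma_N_claims} to reduce each case to a homogeneous element of $W$ of the same degree. The only cosmetic difference is that you treat $\deg(v)=1$ via part (a) and start the inductive step at $n\geqslant 2$, whereas the paper takes $n=1,2$ as base cases and cites only parts (b)--(e); both case splits are exhaustive, so the arguments coincide in substance.
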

\begin{proof} 
The lemma is a consequence of the following claim: if  $w\in\CCX$ is a monomial of degree $n$, then $w\equiv w_0$ for some homogeneous $w_0\in W$ with $\deg(w)=\deg(w_0)$. We prove this claim by induction on $n$. 

If $n=1,2$, then $\deg(w)\leqslant2$ and $w\in W$.

Let $n\geqslant 3$ and for each monomial $w'\in \CCX$ with $\deg(w')<n$ we have that $w'\equiv w'_0$ for some homogeneous $w'_0\in W$ with $\deg(w')=\deg(w'_0)$. Note that $w=uv$ for some monomials $u,v\in\CCX$. By parts (b)--(e) of Lemma~\ref{lemma_N_claims} we have that the claim holds.
\end{proof}

%-------------------------------------------------------------------
\medskip
\begin{proof_of}{of Theorem~\ref{theorem-N6}.} Since $\I\subset \Id(\mathbf{N}_6^{\ell})$ by Lemma~\ref{lemma_identities-N6}, Lemma~\ref{lemma-N6} implies that the algebra $\CCX/ \Id(\mathbf{N}_6^{\ell}) $ is the $\CC$-span of the set $\ov{\B}$. By part (b) of Remark~\ref{remark_key} and Lemma~\ref{lemma_N6_min}, to complete the proof of the theorem it is enough to show that $\ov{\B}$ is a basis for $\CCX/ \Id(\mathbf{N}_6^{\ell})$.  By part (a) of Remark~\ref{remark_key} it is enough to show that for every multidegree $\un{\de}=(\de_1,\ldots,\de_n)$ with $\de_1,\ldots,\de_n>0$ and $m=|\un{\de}|\geqslant 1$ if 
$$f(x_1,\ldots,x_n) = \sum_{i=1}^n \al_i\, \big(x_{i}x_{1}^{\de_1} \cdots \cdot x_{i-1}^{\de_{i-1}} x_i^{\de_i-1} x_{i+1}^{\de_{i+1}} \cdots x_n^{\de_n}\big)_{L} + 
\be\,w \in \Id(\mathbf{N}_6^{\ell}),$$
where $\al_1,\ldots,\al_n,\be\in\CC$ and 
\begin{enumerate}
\item[$\bullet$] $w=(x_{j_1} (x_{j_2} x_{j_3})) x_{j_4} \cdots x_{j_m}$ for 
$(j_1,\ldots,j_m)=(\underbrace{1,\ldots,1}_{\de_1},\underbrace{2,\ldots,2}_{\de_2},\ldots,\underbrace{n\ldots,n}_{\de_n})$, in case  $m\geqslant 3$;

\item[$\bullet$] $w=0$, $\be=0$, in case $m=1,2$,
\end{enumerate}
then $\al_1=\cdots=\al_n=\be=0$. In case $m=1,2$ the polynomial $f$ belongs to the list: $\al_1 x_1$, $\al_1 x_1^2$, $\al_1 x_1 x_2 + \al_2 x_2 x_1$, and the required statement follows easily. Therefore, we assume that $m\geqslant 3$.

\medskip
\noindent{\bf (a)}
At first, assume that $\un{\de}=1^n$ for $n\geqslant3$. Then
$$f(x_1,\ldots,x_n) = \sum_{i=1}^n \al_i\, v_{i,n}(x_1,\ldots,x_n) + 
\be\, w_n(x_1,\ldots,x_n),$$
where $v_{i,n}$ and $w_n$ were defined in~\eqref{eq_notations}. Denote 
$$\un{a}_j=(e_2,\ldots,e_2,\underbrace{e_1}_{j^{\rm th}\,\text{position}},e_2,\ldots,e_2).$$
Note that for every $1\leqslant i,j\leqslant n$ we have
\begin{eq}\label{eq_vi}
v_{i,n}(\un{a}_j) =
\left\{
\begin{array}{rl}
e_1 &, \text{ if } i=j\\
\ell e_1 &, \text{ if } i\neq j\\
\end{array}
\right.,
\end{eq}
\begin{eq} \label{eq_w}
w_n(\un{a}_j) =
\left\{
\begin{array}{rl}
e_1 &, \text{ if } j=1\\
\ell e_1 &, \text{ if } j\not\in\{1,3\}\\
\ell^2 e_1 &, \text{ if } j=3\\
\end{array}
\right..
\end{eq}%
Since $f(e_2,\ldots,e_2)=0$, then $\be=-\al_1-\cdots - \al_n$. 
Given $j\not\in \{1,3\}$, equality $f(\un{a}_j)=0$ implies that 
$$\sum_{1\leqslant i\leqslant n,\; i\neq j} \ell \al_i + \al_j + \ell \be =0.$$
Thus, $\al_j(1-\ell)=0$, i.e., $\al_j=0$. Since $f(\un{a}_1)=0$, we obtain $0=\al_1 + \ell \al_3 + \be = (\ell - 1)\al_3$, i.e., $\al_3=0$. Since $f(\un{a}_3)=0$, we obtain $0=\ell \al_1+ \be \ell^2=(\ell-\ell^2)\al_1$. Therefore, $\al_1=\cdots=\al_n=\be=0$.

\medskip
\noindent{\bf (b)}
Assume that $\un{\de}$ is an arbitrary with $\de_1,\ldots,\de_n>0$ and $m=|\un{\de}|\geqslant 3$. Applying congruences~\eqref{eq_18}, \eqref{eq_lemma_N6_1}, \eqref{eq_lemma_N6_2}, \eqref{eq_lemma_N6_3} to the complete linearization of $f$, we obtain the following multilinear polynomial identity for $\mathbf{N}_6^{\ell}$ of multidegree $1^m$:
$$h(x_1,\ldots,x_m)=\de_1 ! \cdots \de_{n}!\sum_{r=1}^m \frac{1}{\de_{\un{\de}|r|}} \al_{\un{\de}|r|}\, v_{r,m}(x_1,\ldots,x_m) + \de_1 ! \cdots \de_{n}!\, \be\, w_m(x_1,\ldots,x_m) + \be h_0,$$
where $h_0$ is a linear combination of monomials $\{ v_{r,m}(x_1,\ldots,x_m)\,|\, 1\leqslant r\leqslant m\}$. By part (a) of the proof, we obtain that $\be=0$. Therefore, part (a) of the proof implies $\al_1=\cdots=\al_n=0$. The proof of the theorem is concluded.
\end{proof_of}

Part 2 of Theorem~\ref{theorem-N6} implies the following corollary.

%----------------------------------------------------------------
\begin{cor}\label{cor_N6}
Let $\B'$ be the following subset of $\CCX:$
\begin{enumerate}
    \item[(a)]  $x_{j_1} x_{j_2} \cdots x_{j_n} \quad\quad\quad\quad\quad\, (n\geqslant 1,\;\;  1\leqslant j_{1}\leqslant j_2 \leqslant \cdots \leqslant j_{n})$,
    \item[(b)] $(x_{j_1}, x_{j_2}, x_{j_3}) x_{j_4} \cdots x_{j_n} \quad (n\geqslant 3,\;\;1 \leqslant j_{1}\leqslant j_2 \leqslant \cdots \leqslant j_{n})$,
    \item[(c)] $[x_{j_1},x_{i}]x_{j_2}\cdots x_{j_{n-1}}  \quad\quad\;\; (n\geqslant 2,\;\; 1\leqslant j_{1}\leqslant j_2 \leqslant \cdots \leqslant j_{n-1},\;\; i> j_1)$. 
    \end{enumerate}
    Then the image of $\B'$ in the relatively free algebra
$\CCX/\Id(\mathbf{N}^{\ell}_6)$ is its basis.
\end{cor}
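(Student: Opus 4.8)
The plan is to deduce the corollary from Theorem~\ref{theorem-N6} by a spanning-plus-dimension-count argument, using that $\I=\Id(\mathbf{N}^{\ell}_6)$ (established in Theorem~\ref{theorem-N6}), so that congruence modulo $\I$ coincides with equality in the relatively free algebra. Since Theorem~\ref{theorem-N6} already gives that $\ov{\B}$ is a basis of $\CCX/\Id(\mathbf{N}^{\ell}_6)$, it suffices to show that $\ov{\B'}$ spans $\CCX/\Id(\mathbf{N}^{\ell}_6)$ and that, for every multidegree, the number of elements of $\B'$ equals the number of elements of $\B$, which is the dimension of the corresponding multihomogeneous component. A spanning set whose cardinality in each multidegree equals that dimension is automatically a basis, so this will finish the proof.

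For the spanning part I would express every element of $\B$ as a $\CC$-combination of elements of $\B'$ modulo $\I$, first recording that congruence~\eqref{eq_18} lets one permute freely all letters of a left-bracketed product except the first. Take a type (a) element $x_{i}x_{j_1}\cdots x_{j_{n-1}}$ of $\B$. If $i\leqslant j_1$ it is already fully sorted, hence it is the type (a) element of $\B'$ of the same multidegree. If $i>j_1$, then $j_1$ is the smallest index occurring, and using $x_{i}x_{j_1}=x_{j_1}x_{i}-[x_{j_1},x_{i}]$ together with distributivity I obtain
\[
x_{i}x_{j_1}\cdots x_{j_{n-1}}\equiv x_{j_1}x_{i}x_{j_2}\cdots x_{j_{n-1}}-[x_{j_1},x_{i}]x_{j_2}\cdots x_{j_{n-1}}.
\]
By congruence~\eqref{eq_18} the first summand is congruent to the fully sorted product, i.e.\ to the type (a) element of $\B'$ of this multidegree, while the second is a type (c) element of $\B'$ (indeed $i>j_1$ and $j_1\leqslant j_2\leqslant\cdots\leqslant j_{n-1}$). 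For a type (b) element $(x_{j_1}(x_{j_2}x_{j_3}))x_{j_4}\cdots x_{j_n}$ of $\B$ I would use the definition of the associator to write $x_{j_1}(x_{j_2}x_{j_3})=(x_{j_1}x_{j_2})x_{j_3}-(x_{j_1},x_{j_2},x_{j_3})$, so that the element equals $x_{j_1}x_{j_2}\cdots x_{j_n}-(x_{j_1},x_{j_2},x_{j_3})x_{j_4}\cdots x_{j_n}$, a difference of a type (a) and a type (b) element of $\B'$. Hence $\ov{\B'}$ spans.

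Finally I would count $\B'$ in a fixed multidegree $\un{\de}$, letting $p$ be the number of indices $i$ with $\de_i>0$ and $m=|\un{\de}|$. Types (a) and (b) contribute $1$ and $[m\geqslant 3]$ elements, respectively. The delicate point, which I expect to be the main obstacle, is the count of type (c) elements $[x_{j_1},x_{i}]x_{j_2}\cdots x_{j_{n-1}}$: although at first glance any pair $j_1<i$ of occurring indices looks admissible, the sorting constraint $j_1\leqslant j_2\leqslant\cdots\leqslant j_{n-1}$ forces $j_1$ to be the \emph{smallest} occurring index, since otherwise a leftover letter with index smaller than $j_1$ would violate $j_1\leqslant j_2$. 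Thus $j_1$ is determined and $i$ ranges over the remaining $p-1$ occurring indices, giving exactly $p-1$ elements of type (c). The total $1+[m\geqslant 3]+(p-1)=p+[m\geqslant 3]$ coincides with the number of elements of $\B$ in the same multidegree, namely $p$ elements of type (a) plus $[m\geqslant 3]$ of type (b). Since $\ov{\B}$ is a basis by Theorem~\ref{theorem-N6}, this common number is the dimension of the multihomogeneous component, and a spanning set of that cardinality is a basis; applying this in each multidegree shows that $\ov{\B'}$ is a basis.
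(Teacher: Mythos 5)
Your proposal is correct. The paper itself gives no argument for this corollary beyond the single assertion that it ``follows from Part 2 of Theorem~\ref{theorem-N6}'', so your write-up supplies exactly the deduction the authors leave to the reader, and it does so by the natural route: rewriting each element of $\B$ modulo $\I=\Id(\mathbf{N}^{\ell}_6)$ as a $\CC$-combination of elements of $\B'$ (using congruence~\eqref{eq_18}, the identity $x_ix_{j_1}=x_{j_1}x_i-[x_{j_1},x_i]$, and the definition of the associator), and then matching cardinalities multidegree by multidegree. Both key points are handled correctly — in particular the observation that the sorting condition forces $j_1$ in a type (c) element to be the smallest occurring index, which is what makes the count $p+[m\geqslant 3]$ on both sides and turns the spanning set $\ov{\B'}$ into a basis.
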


%=================================================================
%=================================================================
\section{Nonassociative case: algebra $\mathbf{N}_5$}\label{section-nonassociative-N5}

Recall that the algebra $\mathbf{N}_5$
has the following multiplication table:
\[
\mathbf{N}_5:  e_1 e_1=0,\qquad
e_1 e_2=e_1, \qquad
e_2 e_1=0, \qquad
e_2 e_2=e_1+e_2.
\]
The algebra $\mathbf{N}_5$ is not associative, since $(e_2,e_2,e_2)=e_1\neq0$. In this section we will prove the following result.

%-----------------------------------------------------------------
\begin{theorem}\label{theorem-N5}
\noindent{1.}    The ideal $\Id(\mathbf{N}_5)$ is minimally generated, as a T-ideal, by the polynomials \eqref{eq_15}, \eqref{eq_16} and $x_1[x_2,x_3]$.

\smallskip
\noindent{2.} 
Let $\B$ be a set from Theorem~\ref{theorem-N6}. Then the image $\ov{\B}$ of $\B$ in the relatively free algebra
$\CCX/\Id(\mathbf{N}_5)$ is its basis.
\end{theorem}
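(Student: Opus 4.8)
The theorem asserts two things about $\mathbf{N}_5$: that $\Id(\mathbf{N}_5)$ is minimally generated by \eqref{eq_15}, \eqref{eq_16} and $x_1[x_2,x_3]$, and that the same set $\B$ from Theorem~\ref{theorem-N6} gives a basis for the relatively free algebra. My plan is to follow exactly the architecture of the proof of Theorem~\ref{theorem-N6}, reusing as much machinery as possible, since the generating sets differ only in the third polynomial ($x_1[x_2,x_3]$ in place of \eqref{eq_17}).

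First I would verify $\I'\subseteq\Id(\mathbf{N}_5)$, where $\I'$ is the T-ideal generated by \eqref{eq_15}, \eqref{eq_16}, $x_1[x_2,x_3]$. Identities \eqref{eq_15} (right commutativity) and \eqref{eq_16} (left symmetry of associators) hold in every Novikov algebra, so only $x_1[x_2,x_3]$ needs checking against the multiplication table: one computes the relevant commutators $[e_i,e_j]$ and the left-multiplications, analogously to Remark~\ref{remark_newN6}. Next comes minimality, which I would establish by the same three-claim strategy as in Lemma~\ref{lemma_N6_min}: show each generator lies outside the span generated by the other two (evaluated modulo the associative T-ideal or modulo $\Id((x_1,x_2)x_3)$), by comparing coefficients of suitably chosen multilinear monomials.

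The substantial structural work is the spanning statement, i.e.\ the analogue of Lemmas~\ref{lemma_N_claims} and~\ref{lemma-N6}. Here I observe that $x_1[x_2,x_3]\equiv 0$ in $\CCX/\I'$ replaces congruence~\eqref{eq_star} (which in the $\mathbf{N}_6^\ell$ case read $x_1[x_2,x_3]\equiv\ell[x_2,x_3]x_1$); setting $\ell=0$ formally. I would re-derive the consequences \eqref{eq_19}--\eqref{eq_22}, \eqref{eq_lemma_N6_1}--\eqref{eq_lemma_N6_5}, checking that the rewriting rules still close up monomials onto $W=\C$-$\mathrm{span}(\B)$; the congruences \eqref{eq_lemma_N6_1}, \eqref{eq_lemma_N6_2}, \eqref{eq_lemma_N6_3}, \eqref{eq_lemma_N6_4}, \eqref{eq_lemma_N6_5} are the reduction engine for Lemma~\ref{lemma_N_claims}, and since $\B$ is the same set, the induction on degree (parts (a)--(e), cases $(\mathrm{e}_1)$--$(\mathrm{e}_5)$) carries over verbatim once the new congruences are in hand. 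This gives $\CCX/\I'=\C$-$\mathrm{span}\{f+\I'\mid f\in\B\}$, hence by $\I'\subseteq\Id(\mathbf{N}_5)$ also $\CCX/\Id(\mathbf{N}_5)=\C$-$\mathrm{span}(\ov{\B})$.

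Finally, for linear independence I would invoke part (a) of Remark~\ref{remark_key} and repeat the evaluation argument of the Theorem~\ref{theorem-N6} proof, but now with the specific multiplication of $\mathbf{N}_5$. The key difference is that $\mathbf{N}_5$ has $e_2e_2=e_1+e_2$, so evaluating the type-(a) monomials $v_{i,n}$ and the type-(b) monomial $w_n$ at the tuples $\un{a}_j=(e_2,\ldots,e_1,\ldots,e_2)$ will produce a different coefficient table than \eqref{eq_vi}--\eqref{eq_w}; I expect this to be the main obstacle, since the $e_2$-powers no longer act idempotently and one must track the $e_1$-component carefully through left-bracketed products. I would compute $v_{i,n}(\un{a}_j)$ and $w_n(\un{a}_j)$ explicitly (the multilinear degree-$1^n$ case suffices, as the general multidegree case reduces to it via complete linearization exactly as in part (b) of the Theorem~\ref{theorem-N6} proof), then solve the resulting linear system to force $\al_1=\cdots=\al_n=\be=0$. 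Because $\mathbf{N}_5$ and $\mathbf{N}_6^\ell$ share the basis $\B$ but satisfy different identities (as the paper's introduction emphasizes), this evaluation step is precisely where the distinction must surface, so I would take care that the coefficient matrix is nonsingular for $\mathbf{N}_5$.
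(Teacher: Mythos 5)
Your proposal follows essentially the same route as the paper's proof: verify the third identity $x_1[x_2,x_3]$ on $\mathbf{N}_5$, prove minimality by the three-claim coefficient-comparison argument of Lemma~\ref{lemma_N6_min}, re-derive the rewriting congruences (the paper's Lemma~\ref{lemma_identities_J} and congruences \eqref{eq_lemma_N6_1_N5}, \eqref{eq_lemma_N6_3_N5}, \eqref{eq_28}, \eqref{eq_29new}) so that the spanning induction of Lemmas~\ref{lemma_N_claims} and~\ref{lemma-N6} carries over, and then redo the evaluation at the tuples $\un{a}_j$ with the $\mathbf{N}_5$ multiplication table — exactly the paper's substitution of \eqref{eq_vi_new}, \eqref{eq_w_new}, \eqref{eq_43} for \eqref{eq_vi}, \eqref{eq_w}, where $f(e_2,\ldots,e_2)=0$ forces $\be=0$ and then $f(\un{a}_j)=\al_j e_1$ forces each $\al_j=0$. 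The plan is sound and identifies all the points where the $\mathbf{N}_6^{\ell}$ argument must be adapted.
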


As in Section~\ref{section-nonassociative-N6}, define elements of $\B$ of {\it type} (a) or (b), and the subspaces $V,W$ of $\CCX$. 
Let $\J$ be the T-ideal of $\CCX$ generated by polynomials~\eqref{eq_15}, \eqref{eq_16}, $x_1[x_2,x_3]$. 
Introduce the following equivalence relation  $\equiv$ on $\CCX$, which is called the {\it congruence} modulo $\J$: for $f,g\in \CCX$ denote
$$
f\equiv g\quad\mathrm{if~and~only~if}\quad f+\J=g+\J.
$$%
Since polynomials~\eqref{eq_15} and $x_1[x_2,x_3]$ lie in $\J$, we obtain
\begin{equation}\label{eq_33}
(x_1 x_3) x_2 \equiv (x_1 x_2) x_3,
\end{equation}%
\begin{equation}\label{eq_1[23]}
x_1 (x_3 x_2) \equiv x_1 (x_2 x_3),
\end{equation}%
respectively. The following remark is straightforward.

%-----------------------------------------------------------------
\begin{remark}\label{remark_identities-N5}
The algebra $\mathbf{N}_5$ satisfies the polynomial identity $x_1[x_2,x_3]$, i.e.,  $\J\subseteq  \Id(\mathbf{N}_5)$.
\end{remark}

%----------------------------------------------------------------
\begin{lemma}\label{lemma_N5_min}
The generating set~\eqref{eq_15}, \eqref{eq_16}, $x_1[x_2,x_3]$  for the T-ideal $\J$ is minimal.
\end{lemma}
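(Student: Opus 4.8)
The plan is to follow the same scheme as in Lemma~\ref{lemma_N6_min}: minimality of a finite generating set of a T-ideal means precisely that no generator lies in the T-ideal generated by the remaining two. Since each of the three polynomials \eqref{eq_15}, \eqref{eq_16} and $x_1[x_2,x_3]$ is multilinear of degree $3$ and $\CCX$ is non-unital, the multidegree-$1^3$ component of the T-ideal generated by any subset of them is exactly the $\CC$-span of the $\Sym_3$-orbit obtained by permuting variables (multiplication by elements of $\CCX$ raises the degree, and identification of variables lands in other multidegrees, so neither contributes to multidegree $1^3$). Accordingly I would set, as in Lemma~\ref{lemma_N6_min}, $L_1$ for the $\CC$-span of $(x_{\si(1)}x_{\si(2)})x_{\si(3)} - (x_{\si(1)}x_{\si(3)})x_{\si(2)}$, $L_2$ for the $\CC$-span of $(x_{\si(1)},x_{\si(2)},x_{\si(3)}) - (x_{\si(2)},x_{\si(1)},x_{\si(3)})$, and $L_3$ for the $\CC$-span of $x_{\si(1)}[x_{\si(2)},x_{\si(3)}]$, $\si\in\Sym_3$, and reduce the lemma to the three claims that \eqref{eq_15} $\notin L_2+L_3$, that \eqref{eq_16} $\notin L_1+L_3$, and that $x_1[x_2,x_3]\notin L_1+L_2$.

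For the last two claims I would pass to the relatively free algebra $\CCX/\Id((x_1x_2)x_3)$, in which every left-bracketed triple product vanishes; this kills $L_1$ while leaving the purely right-bracketed polynomials intact. Modulo this ideal, \eqref{eq_16} becomes $-x_1(x_2x_3)+x_2(x_1x_3)$ and $L_3$ collapses, via $x_i[x_j,x_k]=-x_i[x_k,x_j]$, to the span of $x_1[x_2,x_3]$, $x_2[x_1,x_3]$, $x_3[x_1,x_2]$; comparing the coefficients of $x_1(x_2x_3)$ and $x_1(x_3x_2)$ forces one scalar to equal both $-1$ and $0$, a contradiction, exactly as in Lemma~\ref{lemma_N6_min}. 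For the third claim, modulo the same ideal $L_2$ collapses to the span of $-x_1(x_2x_3)+x_2(x_1x_3)$, $-x_1(x_3x_2)+x_3(x_1x_2)$ and $-x_2(x_3x_1)+x_3(x_2x_1)$, whereas $x_1[x_2,x_3]=x_1(x_2x_3)-x_1(x_3x_2)$ is unchanged; comparing the coefficients of $x_1(x_2x_3)$ and $x_2(x_1x_3)$ again forces a single scalar to be both $-1$ and $0$.

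The one genuinely new point, and the step I expect to be the main obstacle, is the first claim: the associative quotient $\CCX/\Id((x_1,x_2,x_3))$ used for the analogous claim in Lemma~\ref{lemma_N6_min} is useless here, since modulo associativity both \eqref{eq_15} and $x_1[x_2,x_3]$ reduce to $x_1x_2x_3-x_1x_3x_2$ and so cannot be separated. Instead I would pass to $\CCX/\Id(x_1(x_2x_3))$, where all right-bracketed triple products vanish, so that $L_3\equiv 0$ and $L_2$ collapses to the span of the left-bracketed differences $(x_ix_j)x_k-(x_jx_i)x_k$; since \eqref{eq_15} equals $(x_1x_2)x_3-(x_1x_3)x_2$ there, comparing the coefficients of $(x_1x_2)x_3$ and $(x_2x_1)x_3$ forces a single scalar to be both $1$ and $0$, the desired contradiction. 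Each of the three comparisons is a short finite coefficient computation, and together with the reduction of the first paragraph they yield the minimality of the generating set.
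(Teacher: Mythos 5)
Your proposal is correct and follows the paper's proof essentially step for step: the same reduction to the three non-containment claims via the spans $L_1,L_2,L_3$, and for the first and third claims exactly the quotients the paper uses ($\CCX/\Id(x_1(x_2x_3))$ and $\CCX/\Id((x_1x_2)x_3)$, respectively) with the same coefficient comparisons, including your correct observation that the associative quotient from Lemma~\ref{lemma_N6_min} cannot separate \eqref{eq_15} from $x_1[x_2,x_3]$. The only cosmetic deviation is the second claim, where you pass to $\CCX/\Id((x_1x_2)x_3)$ to kill $L_1$ and compare coefficients against the image of $L_3$, whereas the paper passes to $\CCX/\Id(x_1(x_2x_3))$ to kill $L_3$ and compares against $L_1$; both are the same one-line coefficient check and equally valid.
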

\begin{proof} Let $L_1,L_2\subset \CCX$ be the sets defined in the proof of Lemma~\ref{lemma_N6_min}. Denote by $L_3$ the $\CC$-span of $x_{\si(1)}[x_{\si(2)},x_{\si(3)}]$, $\si\in\Sym_3$.  Then the claim of the lemma follows from the following three claims.

\medskip
\noindent{\bf 1.} {\it The polynomial~\eqref{eq_15} does not belong to $L_2+L_3$.}

Assume the contrary. Then in the relatively free algebra $\CCX / \Id( x_1(x_2 x_3) )$ we have 
$$(x_1x_2)x_3 - (x_1 x_3) x_2 = \al_1 [x_2,x_3] x_1 + \al_2 [x_1,x_3] x_2 + \al_3 [x_1,x_2]x_3$$
for some $\al_1,\al_2,\al_3\in\CC$. Considering the coefficients of $(x_1x_2)x_3$ and $(x_2x_1)x_3$, respectively, we obtain that $1=\al_3$ and $0=\al_3$, respectively; a contradiction.

\medskip
\noindent{\bf 2.} {\it The polynomial~\eqref{eq_16} does not belong to $L_1+L_3$.}

Assume the contrary. Then in the relatively free algebra $\CCX / \Id( x_1(x_2 x_3) )$ we have 
$$(x_1x_2)x_3 - (x_2 x_1) x_3 = \al_1 ((x_1 x_2) x_3 - (x_1 x_3) x_2 ) + \al_2 ((x_2 x_1) x_3 - (x_2 x_3)x_1 ) + \al_3 ((x_3 x_1) x_2 - (x_3 x_2) x_1)$$
for some $\al_1,\al_2,\al_3\in\CC$. Considering the coefficients of $(x_1x_2)x_3$ and $(x_1x_3)x_2$, respectively, we obtain that $1=\al_1$ and $0=\al_1$, respectively; a contradiction.

\medskip
\noindent{\bf 3.} {\it The polynomial $x_1[x_2,x_3]$ does not belong to $L_1+L_2$.}

Assume the contrary. Then in the relatively free algebra $\CCX / \Id( (x_1x_2) x_3 )$ we have 
$$x_1 [x_2, x_3] = \al_1 ( x_2 (x_3 x_1) - x_3 (x_2 x_1) ) + \al_2 (x_1 (x_3 x_2) - x_3 (x_1 x_2)) + \al_3 (x_1 (x_2 x_3) - x_2 (x_1 x_3))$$
for some $\al_1,\al_2,\al_3\in\CC$. Considering the coefficients of $x_1(x_2x_3)$ and $x_2(x_1x_3)$, respectively, we obtain that $1=\al_3$ and $0=\al_3$, respectively; a contradiction.

\end{proof}

%-----------------------------------------------------------------
\begin{lemma}\label{lemma_identities_J}
The following polynomials lay in the T-ideal $\J$: \eqref{eq_19}, \eqref{eq_20}, \eqref{eq_21}. Moreover, 
\begin{equation}\label{eq_28}
x_1((x_2 x_3) x_4)\equiv  (x_1(x_2x_3))x_4 - ((x_2x_1)x_4)x_3 + (x_2(x_1x_4))x_3,
\end{equation}
\begin{eq}\label{eq_29new}
   %x_1(x_2(x_3x_4)) - 2 (x_1(x_2 x_3))x_4 + ((x_1x_2)x_3)x_4.
x_1(x_2(x_3x_4)) \equiv (x_1(x_3x_2))x_4 - ((x_3 x_1)x_4)x_2 + (x_3(x_1x_4))x_2. 
\end{eq}
\end{lemma}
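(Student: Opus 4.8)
The plan is to establish the four congruences for $\J$ by adapting the verifications already carried out for $\I$ in Lemma~\ref{lemma_identities_I}, paying attention to the fact that $\J$ contains the stronger relation $x_1[x_2,x_3]\in\J$ in place of the weaker congruence~\eqref{eq_star}. First I would observe that polynomials~\eqref{eq_19}, \eqref{eq_20}, \eqref{eq_21} lie in $\J$ by essentially the same formal manipulations used for $\I$. Indeed, congruence~\eqref{eq_20} holds in every right commutative algebra and its derivation used only congruence~\eqref{eq_33} (which is~\eqref{eq_18} for $\J$), so that argument transfers verbatim. For polynomial~\eqref{eq_19}, the earlier proof invoked congruence~\eqref{eq_star}; here the situation is simpler, since $x_1[x_2,x_3]\in\J$ kills the relevant terms directly: writing $([x_1,x_2],x_3,x_4)=([x_1,x_2]x_3)x_4-[x_1,x_2](x_3x_4)$, the second summand is of the form $u[x_1,x_2]$ only after applying~\eqref{eq_33}, so I would use~\eqref{eq_33} to move the commutator into an interior right factor and then apply $x_1[x_2,x_3]\equiv0$. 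Once~\eqref{eq_19} and~\eqref{eq_20} are in hand, the derivation of~\eqref{eq_21} copies the one for $\I$ line for line, since that step used only~\eqref{eq_20}, \eqref{eq_19}, \eqref{eq_20} and congruence~\eqref{eq_33}.

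Next I would derive the two normalization congruences~\eqref{eq_28} and~\eqref{eq_29new}. Congruence~\eqref{eq_28} is identical in statement to~\eqref{eq_lemma_N6_4}, and its proof there consisted of adding the two elements $(x_1,x_2x_3,x_4)-(x_2x_3,x_1,x_4)$ and $(x_2x_3,x_1,x_4)-(x_2,x_1,x_4)x_3$ of the T-ideal (coming from~\eqref{eq_16} and~\eqref{eq_20}); since both generators are already shown to lie in $\J$, the same computation yields~\eqref{eq_28} for $\J$. For the genuinely new congruence~\eqref{eq_29new}, I would start from the left-hand side $x_1(x_2(x_3x_4))$ and first apply congruence~\eqref{eq_1[23]} to interchange $x_3$ and $x_4$ inside the innermost product if needed; more to the point, I would use~\eqref{eq_16} to swap the outer two factors $x_1,x_2$, producing $x_2(x_1(x_3x_4))$ up to a term of the form $x_i[\,\cdot\,,\cdot\,]$ which vanishes modulo $\J$. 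The key simplification is that in $\J$ the identity $x_1[x_2,x_3]\equiv0$ forces associativity-type rearrangements of any monomial whose innermost factor is of degree $\geqslant2$ on the left, so after rewriting $x_2(x_3x_4)$ via congruence~\eqref{eq_28} with the roles suitably permuted and collecting terms, the right-hand side of~\eqref{eq_29new} should emerge.

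The main obstacle I anticipate is bookkeeping in the derivation of~\eqref{eq_29new}: the two congruences~\eqref{eq_28} and~\eqref{eq_29new} differ by a transposition of $x_2$ and $x_3$, and the subtlety is that the T-ideal $\J$ is \emph{not} invariant under naive index permutation of a single congruence, because the relation $x_1[x_2,x_3]$ is not symmetric. Concretely, applying congruence~\eqref{eq_28} after an inner swap introduces a spurious commutator term, and one must verify that every such term is left-multiplied by a monomial of degree $\geqslant1$, so that it is annihilated by $x_1[x_2,x_3]\equiv0$. I would therefore track each commutator that arises and confirm it has the form $w[x_i,x_j]$ with $\deg(w)\geqslant1$; once this is checked the two congruences follow. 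The overall strategy mirrors Lemma~\ref{lemma_identities_I}, the only real content being the replacement of congruence~\eqref{eq_star} by the stronger relation $x_1[x_2,x_3]\equiv0$, which both simplifies the proofs of~\eqref{eq_19}--\eqref{eq_21} and necessitates the separate verification of~\eqref{eq_29new}.
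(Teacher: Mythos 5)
Your handling of \eqref{eq_20}, \eqref{eq_21} and \eqref{eq_28} is correct and agrees with the paper: \eqref{eq_20} needs only right commutativity, \eqref{eq_21} needs only \eqref{eq_19} and \eqref{eq_20}, and \eqref{eq_28} is obtained exactly as \eqref{eq_lemma_N6_4} once \eqref{eq_16} and \eqref{eq_20} are known to lie in $\J$. However, your derivation of \eqref{eq_19} has a genuine gap. You never invoke \eqref{eq_16}, and the mechanism you describe cannot replace it: the congruence \eqref{eq_33} has the shape $(ab)c\equiv(ac)b$, so it always fixes the leftmost factor of a monomial. Applied to the second summand $[x_1,x_2](x_3x_4)=(x_1x_2)(x_3x_4)-(x_2x_1)(x_3x_4)$ it yields $(x_1(x_3x_4))x_2-(x_2(x_3x_4))x_1$, whose two monomials have \emph{different} left factors; this is not of the form $u[x_i,x_j]$, so $x_1[x_2,x_3]\equiv0$ does not apply (and you say nothing about the first summand $([x_1,x_2]x_3)x_4$ at all). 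The failure is not merely presentational: \eqref{eq_19} does not belong to the T-ideal generated by \eqref{eq_15} and $x_1[x_2,x_3]$ alone. One can construct a graded algebra $A_1\oplus A_2\oplus A_3\oplus A_4$ satisfying both identities in which the values of $((a_ia_j)a_k)a_l$ and $(a_ia_j)(a_ka_l)$ are independent parameters, and choose them so that $([a_1,a_2]a_3)a_4\neq[a_1,a_2](a_3a_4)$. Hence \eqref{eq_16} is indispensable, and this is exactly how the paper proceeds: it first derives the analogue of \eqref{eq_star}, namely $(x_1,x_2,x_3)\equiv(x_1,x_3,x_2)$ (congruence \eqref{eq_starN5}, from \eqref{eq_33} and $x_1[x_2,x_3]$), and then uses \eqref{eq_16} together with \eqref{eq_starN5} to move the commutator into the \emph{third} slot of an associator, $([x_1,x_2],x_3,x_4)\equiv(x_3,x_4,[x_1,x_2])=(x_3x_4)[x_1,x_2]-x_3(x_4[x_1,x_2])$, where both terms are honest instances of $u[x_i,x_j]$.

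Your route to \eqref{eq_29new} stumbles on the same point. Swapping the outer factors by \eqref{eq_16} gives $x_1(x_2u)\equiv x_2(x_1u)+[x_1,x_2]u$, and the correction term has the commutator as a \emph{left} factor. This is not an instance of $x_1[x_2,x_3]$ and does not vanish modulo $\J$: if $[x_2,x_3]x_1$ lay in $\J\subseteq\Id(\mathbf{N}_5)$, then $[e_1,e_2]e_2=e_1e_2=e_1$ would vanish in $\mathbf{N}_5$, which it does not. So the check you propose in your last paragraph (that every spurious commutator has the form $w[x_i,x_j]$) would fail precisely on the term your outer swap creates. The paper avoids the outer swap entirely: it keeps $x_1$ in the outermost position and rewrites only the inside, namely $x_1(x_2(x_3x_4))\equiv x_1((x_3x_4)x_2)$ by \eqref{eq_1[23]}, then $\equiv x_1((x_3x_2)x_4)$ by \eqref{eq_33} applied to the inner factor, and finally applies \eqref{eq_28} with $x_2,x_3$ interchanged. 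That three-step computation is the entire proof of \eqref{eq_29new}, and it is the correct repair of your argument.
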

\begin{proof}
By congruence~\eqref{eq_33}, $(x_1,x_2,x_3)-(x_1,x_3,x_2)\equiv  - x_1[x_2,x_3]$. Hence, the polynomial $x_1[x_2,x_3]$ implies that
\begin{eq}\label{eq_starN5}
(x_1,x_2,x_3)\equiv (x_1,x_3,x_2).
\end{eq}%

\noindent{}Consequently applying polynomial~\eqref{eq_16}, congruence~\eqref{eq_starN5} and $x_1[x_2,x_3]\equiv0$, we obtain $([x_1,x_2],x_3,x_4)\equiv (x_3,[x_1,x_2],x_4)\equiv (x_3,x_4,[x_1,x_2]) = (x_3x_4)[x_1,x_2] - x_3(x_4[x_1,x_2]) \equiv 0$, i.e., polynomial~\eqref{eq_19} lies in $\J$.

In the proof of Lemma~\ref{lemma_identities_I} it was shown that polynomial \eqref{eq_20} lies in each T-ideal containing polynomial~\eqref{eq_15} as well as that  polynomial \eqref{eq_21} lies in each T-ideal containing polynomials \eqref{eq_19}, \eqref{eq_20}. 

Since polynomials~\eqref{eq_16} and~\eqref{eq_20} lay in $\J$, we prove congruence~\eqref{eq_28} exactly in the same way as congruence~\eqref{eq_lemma_N6_4}. 

To prove congruence~\eqref{eq_29new}, we consequently apply congruences~\eqref{eq_1[23]}, \eqref{eq_33}, \eqref{eq_28}: 
$$x_1(x_2(x_3x_4))\equiv x_1((x_3x_4)x_2) \equiv x_1((x_3x_2)x_4) \equiv 
(x_1(x_3x_2))x_4 - ((x_3x_1)x_4)x_2 + (x_3(x_1x_4))x_2.$$
\end{proof}

As in Section~\ref{section-nonassociative-N6}, applying congruence~\eqref{eq_33} we obtain that $V +\J \subset W +\J$ in $\CCX/ \J$. Moreover,  polynomials~\eqref{eq_16}, \eqref{eq_21}, respectively, from $\J$ imply that 
\begin{equation}\label{eq_lemma_N6_1_N5}
x_2(x_1x_3) \equiv x_1(x_2 x_3) + f_1,
\end{equation}
\begin{equation}\label{eq_lemma_N6_3_N5}
(x_4(x_2x_3))x_1 \equiv  (x_1(x_2 x_3)) x_4 + f_3,
\end{equation}
respectively, for some multihomogeneous $f_1,f_3\in V$ with $\mdeg(f_1)=1^3$ and $\mdeg(f_3)=1^4$.

%-------------------------------------------------------------------
\medskip
\begin{proof_of}{of Theorem~\ref{theorem-N5}.}
The proof is an analogue of the proof of Theorem~\ref{theorem-N6}, where we use congruences modulo $\J$ instead of congruences modulo $\I$. Namely, instead of congruences 
\eqref{eq_18}, \eqref{eq_lemma_N6_1}, \eqref{eq_lemma_N6_2}, \eqref{eq_lemma_N6_3}, \eqref{eq_lemma_N6_4}, \eqref{eq_lemma_N6_5} modulo $\I$, respectively, we use congruences  
\eqref{eq_33}, \eqref{eq_lemma_N6_1_N5}, \eqref{eq_1[23]}, 
\eqref{eq_lemma_N6_3_N5}, \eqref{eq_28}, \eqref{eq_29new} modulo $\J$, respectively. New congruence~\eqref{eq_29new} is not the same as the corresponding old congruence \eqref{eq_lemma_N6_5}, but the rest of new congruences coincide with the corresponding old ones. This fact does not change the proof. Namely, we obtain that analogues of Lemmas~\ref{lemma_N_claims},~\ref{lemma-N6} hold, and then we repeat the proof of Theorem~\ref{theorem-N6}. The only difference starts with formula~\eqref{eq_vi}. We should substitute formulas~\eqref{eq_vi}, \eqref{eq_w}, respectively, with
\begin{eq}\label{eq_vi_new}
v_{i,n}(\un{a}_j) =
\left\{
\begin{array}{rl}
e_1 &, \text{ if } i=j\\
0 &, \text{ if } i\neq j\\
\end{array}
\right.,
\end{eq}
\begin{eq} \label{eq_w_new}
w_n(\un{a}_j) =
\left\{
\begin{array}{rl}
e_1 &, \text{ if } j=1\\
0 &, \text{ if } j\neq 1\\
\end{array}
\right.,
\end{eq}%
respectively. Since 
\begin{eq}\label{eq_43}
v_{i,n}(e_2,\ldots,e_2)=(n-1)e_1 + e_2 \;\text{ and }\; w_n(e_2,\ldots,e_2)=(n-2)e_1 + e_2 
\end{eq}
we obtain
$$0=f(e_2,\ldots,e_2)=(\al_1+\cdots+\al_n)((n-1)e_1 + e_2) + \be ((n-2)e_1 + e_2)$$
and, therefore, $\be=0$.  Thus, $f(\un{a}_j)=\al_j e_1 =0$ implies that $\al_j=0$ for all $1\leqslant j\leqslant n$. The rest of the proof is the same. 
\end{proof_of}

Part 2 of Theorem~\ref{theorem-N5} implies the following corollary.

%----------------------------------------------------------------
\begin{cor}\label{cor_N5}
Let $\B'$ be the same subset of $\CCX$ as in Corollary~\ref{cor_N6}. 
    Then the image of $\B'$ in the relatively free algebra
$\CCX/\Id(\mathbf{N}_5)$ is its basis.
\end{cor}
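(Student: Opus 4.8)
The plan is to deduce the corollary from Part~2 of Theorem~\ref{theorem-N5}, which already furnishes a basis $\ov{\B}$ of $\CCX/\Id(\mathbf{N}_5)$, by exhibiting an explicit invertible change of basis from $\B$ to $\B'$ modulo $\J$. Since $\J\subseteq\Id(\mathbf{N}_5)$ by Remark~\ref{remark_identities-N5}, every congruence modulo $\J$ is valid in $\CCX/\Id(\mathbf{N}_5)$; hence it suffices to rewrite each element of $\B$ as a $\CC$-combination of elements of $\B'$ modulo $\J$ (showing that $\ov{\B'}$ spans) and then to match dimensions multidegree by multidegree. The only nontrivial congruence needed for this rewriting is right commutativity~\eqref{eq_33}, together with the universal identities $x_ix_j=x_jx_i+[x_i,x_j]$ and $x_{j_1}(x_{j_2}x_{j_3})=(x_{j_1}x_{j_2})x_{j_3}-(x_{j_1},x_{j_2},x_{j_3})$; in particular the rewriting is the same one that underlies Corollary~\ref{cor_N6}, so that argument can be quoted rather than reproduced.

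I would treat the two types of elements of $\B$ in turn. A type~(a) element $x_ix_{j_1}\cdots x_{j_{n-1}}$ whose leading letter is least ($i\leqslant j_1$) is already fully sorted, hence is a type~(a) element of $\B'$; if $i>j_1$, then applying $x_ix_{j_1}=x_{j_1}x_i+[x_i,x_{j_1}]$ and sorting the tails by~\eqref{eq_33} yields the fully sorted monomial (type~(a) of $\B'$) minus the element $[x_{j_1},x_i]x_{j_2}\cdots x_{j_{n-1}}$, which is a type~(c) element of $\B'$. A type~(b) element $(x_{j_1}(x_{j_2}x_{j_3}))x_{j_4}\cdots x_{j_n}$ is handled by expanding the inner bracketing through the associator identity: the summand $((x_{j_1}x_{j_2})x_{j_3})x_{j_4}\cdots x_{j_n}$ is the fully sorted monomial (type~(a) of $\B'$), while $(x_{j_1},x_{j_2},x_{j_3})$ is already a sorted associator and contributes, with sign, the type~(b) element of $\B'$ of the same multidegree. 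This realizes $\ov{\B'}$ as a spanning set of $\CCX/\Id(\mathbf{N}_5)$.

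It remains to see that the rewriting is invertible in each multidegree, and here I would simply count. Fix $\un{\de}$ with all entries positive and $m=|\un{\de}|\geqslant 3$, and let $k$ be the least index occurring. The crucial observation is that the defining constraint $j_1\leqslant j_2$ for a type~(c) element $[x_{j_1},x_i]x_{j_2}\cdots x_{j_{n-1}}$, together with $i>j_1$, forces $x_{j_1}$ to be the least letter, i.e.\ $j_1=k$; thus the type~(c) elements of multidegree $\un{\de}$ are exactly $[x_k,x_i](\text{sorted remainder})$ for the non-minimal indices $i$, and they biject with the non-sorted type~(a) elements of $\B$ (those with leading letter $>k$). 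Consequently both $\B$ and $\B'$ have exactly $n+1$ elements of multidegree $\un{\de}$, and ordering the $\B$-elements as (sorted type~(a), non-sorted type~(a), type~(b)) and the $\B'$-elements as (the sorted monomial, the type~(c) elements, the type~(b) element), the transition matrix is block lower triangular with diagonal blocks $1$, $-I$, $-1$, hence invertible. Since $\ov{\B}$ is a basis, the multihomogeneous component of multidegree $\un{\de}$ has dimension $n+1$, so the spanning set $\ov{\B'}$ is a basis of that component; collecting over all multidegrees (the cases $m=1,2$ being immediate) shows $\ov{\B'}$ is a basis of $\CCX/\Id(\mathbf{N}_5)$. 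The only place that requires any thought is precisely this combinatorial bijection forced by the constraint $j_1\leqslant j_2$; everything else is the universal rewriting above.
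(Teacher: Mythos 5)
Your proof is correct and takes essentially the same route as the paper: the paper proves Corollary~\ref{cor_N5} by simply asserting that it follows from Part 2 of Theorem~\ref{theorem-N5}, and the implicit content of that assertion is exactly the multidegree-by-multidegree invertible change of basis between $\B$ and $\B'$ (via right commutativity and the definitional expansions of commutator and associator) that you spell out. Your details are accurate, including the key combinatorial point that $j_1\leqslant j_2$ together with $i>j_1$ forces $x_{j_1}$ to be the minimal letter, which gives the bijection and the triangular transition matrix.
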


%=================================================================
%=================================================================
\section{Corollaries}\label{section_cor}

%=================================================================
\subsection{Isomorphism problem}

%-----------------------------------------------------------------
\begin{cor}\label{cor_isomorphism}
Any two-dimensional Novikov algebras, which are not associative, are isomorphic if and only if they satisfy the same polynomial identities.
\end{cor}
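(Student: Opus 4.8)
The plan is to prove Corollary~\ref{cor_isomorphism} by establishing both directions, where only the nontrivial direction requires work. If two algebras are isomorphic, they satisfy the same polynomial identities, since an algebra isomorphism induces a $\CC$-algebra isomorphism of the free algebras that preserves $\Id$. For the converse, I would argue that among the nonassociative two-dimensional Novikov algebras, namely $\mathbf{T}_3$, $\mathbf{N}_5$, and the family $\mathbf{N}_6^{\ell}$ for $\ell\in\CC\backslash\{0,1\}$, no two distinct (up to isomorphism) algebras share the same T-ideal of polynomial identities, so that coinciding identities forces equality of the algebras in the classification of Theorem~\ref{theor-bai-meng}.

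The key steps, carried out by inspecting the explicit generating sets produced in the earlier sections, are as follows. First, I would separate $\mathbf{T}_3$ from the rest: by Theorem~\ref{theorem-T3} the algebra $\mathbf{T}_3$ satisfies $(x_1x_2)x_3\equiv 0$, whereas neither $\mathbf{N}_5$ nor any $\mathbf{N}_6^{\ell}$ satisfies this identity (for instance $e_2^2\neq 0$ gives a nonzero value of a left-normed product), so $\mathbf{T}_3$ is distinguished. Second, I would separate $\mathbf{N}_5$ from the family $\mathbf{N}_6^{\ell}$: by Theorem~\ref{theorem-N5} the algebra $\mathbf{N}_5$ satisfies the identity $x_1[x_2,x_3]$, while Remark~\ref{remark_newN6} shows $\mathbf{N}_6^{\ell}$ does not, since $e_1[e_2,\ldots]$-type evaluations are nonzero whenever $\ell\neq 1$; concretely $x_1[x_2,x_3]$ fails on $\mathbf{N}_6^{\ell}$ because $[e_1,e_2]=(1-\ell)e_1\neq0$ yields a nonzero product. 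Third, and this is the genuinely delicate part, I would separate the members of the family $\mathbf{N}_6^{\ell}$ from one another for distinct parameters $\ell,\ell'$. Here the bases of the relatively free algebras coincide (the set $\B$ is the same for all $\ell$), so one cannot distinguish them by dimension counts of multihomogeneous components; the separation must come from the specific coefficient $\ell$ appearing in the generator~\eqref{eq_17}.

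The main obstacle is precisely this last step, and I expect to resolve it by exhibiting a multilinear polynomial identity whose membership in $\Id(\mathbf{N}_6^{\ell})$ detects $\ell$. The natural candidate is built from~\eqref{eq_17}: the polynomial $(x_1,x_2,x_3)-(x_1,x_3,x_2)-\ell[x_3,x_2]x_1$ lies in $\Id(\mathbf{N}_6^{\ell})$ but, for $\ell'\neq\ell$, the polynomial $(x_1,x_2,x_3)-(x_1,x_3,x_2)-\ell[x_3,x_2]x_1$ does not lie in $\Id(\mathbf{N}_6^{\ell'})$. To verify this I would evaluate on the generators using Remark~\ref{remark_newN6}: since $(e_2,e_2,e_1)=\ell(1-\ell)e_1$ and $[e_1,e_2]=(1-\ell)e_1$, a suitable substitution of $e_1,e_2$ into the variables produces a value equal to a nonzero scalar multiple of $(\ell-\ell')e_1$, which vanishes exactly when $\ell=\ell'$. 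Thus coinciding identities force $\ell=\ell'$, completing the separation.

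Finally, I would assemble these three separations into the statement. Given two nonassociative two-dimensional Novikov algebras with $\Id(\A)=\Id(\B)$, Theorem~\ref{theor-bai-meng} places each in the list $\{\mathbf{T}_3,\mathbf{N}_5,\mathbf{N}_6^{\ell}\}$; the first two steps show they lie in the same family, and within $\mathbf{N}_6^{\ell}$ the third step pins down the common parameter, so $\A\cong\B$. Conversely isomorphic algebras trivially have equal T-ideals, which closes the equivalence.
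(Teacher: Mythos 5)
Your proposal is correct and follows essentially the same route as the paper: it separates $\mathbf{T}_3$ by the identity $(x_1x_2)x_3$, separates $\mathbf{N}_5$ by $x_1[x_2,x_3]$, and distinguishes the members of the family $\mathbf{N}_6^{\ell}$ via the identity~\eqref{eq_17}, exactly as in the paper's proof of Corollary~\ref{cor_isomorphism}. The only immaterial difference is in the last step, where you verify $(x_1,x_2,x_3)-(x_1,x_3,x_2)-\ell[x_3,x_2]x_1\notin\Id(\mathbf{N}_6^{\ell'})$ by direct evaluation at $(e_2,e_2,e_1)$ (yielding $(1-\ell')(\ell'-\ell)e_1\neq 0$), whereas the paper subtracts the two identities to reduce to $[x_3,x_2]x_1\notin\Id(\mathbf{N}_6^{\ell'})$.
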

\begin{proof} There are the following Novikov algebras, which are not associative: $\mathbf{T}_3$, $\mathbf{N}_5$, $\mathbf{N}_6^{\ell}$, where $\ell\in\CC\backslash\{0,1\}$.

The polynomial identity $(x_1x_2)x_3$ for $\mathbf{T}_3$ (see Theorem~\ref{theorem-T3}) is not a polynomial identity for $\mathbf{N}_5$ and $\mathbf{N}_6^{\ell}$, since $e_1(e_2e_2)=e_1\neq0$ in $\mathbf{N}_5$ as well as in $\mathbf{N}_6^{\ell}$. 

The polynomial identity $x_1[x_2,x_3]$ for $\mathbf{N}_5$ (see Remark~\ref{remark_identities-N5}) is not a polynomial identity for $\mathbf{T}_3$ and $\mathbf{N}_6^{\ell}$, since  $e_2[e_1,e_2]=-e_1\neq0$ in $\mathbf{T}_3$ and $e_2[e_1,e_2]=(\ell-\ell^2)e_1\neq0$ in $\mathbf{N}_6^{\ell}$. 

The polynomial identity $f(x_1,x_2,x_3)=(x_1,x_2,x_3)-(x_1,x_3,x_2)-\ell[x_3,x_2]x_1$ for $\mathbf{N}_6^{\ell}$ (see Lemma~\ref{lemma_identities-N6}) is not a polynomial identity for $\mathbf{T}_3$ and $\mathbf{N}_5$, since $f(e_2,e_2,e_1)=-e_1\neq0$ in $\mathbf{T}_3$ and $f(e_2,e_2,e_1)=-\ell e_1\neq0$ in $\mathbf{N}_5$. Finally, $f$ is not a polynomial identity for $\mathbf{N}_6^{\ell'}$ for $\ell'\in\CC\backslash\{0,1,\ell\}$,  since otherwise $[x_3,x_2]x_1\in\Id(\mathbf{N}_6^{\ell'})$, but $[e_1,e_2]e_2=(1-\ell')e_1\neq0$ in $\mathbf{N}_6^{\ell'}$; a contradiction. The proof is concluded.
\end{proof}

%=================================================================
\subsection{Codimensions}\label{section_codim}

Given an algebra $\A$, we define the \textit{codimension sequence} $\{c_n(\A)\}_{n\geq1}$ by
\[
c_n(\A)=\dim_{\CC}P_n(\A),\:\text{where}\:P_n(\A)=\frac{P_n}{P_n\cap \Id(\A)}
\]
and $P_{n}$ is the vector subspace of 
$\CCX $ 
of multilinear polynomials in the variables $x_1$,
$x_{2},\ldots,x_{n}.$

As a corollary of Propositions~\ref{prop-T2}, \ref{prop-N1-N2-N3}, \ref{prop-N4}, we obtain the following result.

%-----------------------------------------------------------------
\begin{cor}\label{cor-codimension-associative}
    The codimensions of
    two-dimensional associative Novikov algebras with non-zero multiplication are
    \begin{itemize}
        \item [$\bullet$]
$c_1(\mathbf{T}_2)=c_2(\mathbf{T}_2)=1$  and  $c_n(\mathbf{T}_2)=0$ for $n\geqslant 3;$
\smallskip
\item[$\bullet$] 
$c_n(\A)=1$   for $n\geqslant 1$,  where 
$\A\in \{\mathbf{N}_1, \mathbf{N}_2,\mathbf{N}_3\};$
\smallskip
\item[$\bullet$]
$c_n(\mathbf{N}_4)=n$  for $n\geqslant 1.$
    \end{itemize}
\end{cor}

As a corollary of Theorems~\ref{theorem-T3}, \ref{theorem-N6}, \ref{theorem-N5}  we obtain the following result.

%-----------------------------------------------------------------
\begin{cor}\label{cor-codimension-nonassociative}
    The codimensions of  
    two-dimensional Novikov algebras, which are not associative, are
\begin{itemize}
    \item [$\bullet$]
    $c_n(\mathbf{T}_3)=n\, \text{ for } n\geqslant 1;$
    \smallskip
    \item[$\bullet$]
    $c_n(\mathbf{N}_5)=n$ for $n=1,2$ and
    $c_n(\mathbf{N}_5)=n+1\, \text{ for } n\geqslant 3;$
    \smallskip
    \item[$\bullet$]
    $c_n(\mathbf{N}^{\ell}_6)=n$ for $n=1,2$ and
    $c_n(\mathbf{N}^{\ell}_6)=n+1\, \text{ for } n\geqslant 3.$
\end{itemize}
\end{cor}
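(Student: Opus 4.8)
The plan is to read off each codimension directly from the monomial bases of the relatively free algebras established in Theorems~\ref{theorem-T3}, \ref{theorem-N6}, and~\ref{theorem-N5}, reducing the whole statement to an enumeration. The first step is to record the general principle: since each of these bases consists of \emph{multihomogeneous} elements, the subset of basis elements of multidegree $1^n=(1,\ldots,1)$ projects to a basis of the image of $P_n$ in $\CCX/\Id(\A)$, and therefore $c_n(\A)$ equals the number of basis elements that are multilinear in $x_1,\ldots,x_n$. The key observation I would use throughout is that a non-decreasing sequence of pairwise distinct indices is strictly increasing, hence uniquely determined by the set of indices it uses.

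For $\mathbf{T}_3$, I would count the multilinear members of the set~\eqref{span-T3}, namely the right-bracketed monomials $(x_{j_{n-1}}\cdots x_{j_1}x_i)_R$ with $j_1\leqslant\cdots\leqslant j_{n-1}$. Multilinearity forces $\{i,j_1,\ldots,j_{n-1}\}=\{1,\ldots,n\}$, so such an element is completely determined by the choice of the distinguished innermost index $i$, while the remaining $n-1$ indices fill the non-decreasing slots in the unique increasing order. This yields exactly $n$ elements, giving $c_n(\mathbf{T}_3)=n$.

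For $\mathbf{N}_5$ and $\mathbf{N}_6^{\ell}$, both share the basis $\B$ of Theorem~\ref{theorem-N6}, so the count is identical for the two algebras. Among the type-(a) elements $x_i x_{j_1}\cdots x_{j_{n-1}}$, multilinearity again forces the distinguished leftmost index $i$ to be free and the remaining indices to sit in the unique increasing order, producing $n$ elements. Among the type-(b) elements $(x_{j_1}(x_{j_2}x_{j_3}))x_{j_4}\cdots x_{j_n}$ with $j_1\leqslant\cdots\leqslant j_n$, multilinearity forces all indices distinct, hence $j_1<\cdots<j_n$, i.e.\ $(j_1,\ldots,j_n)=(1,\ldots,n)$; this is a single element when $n\geqslant 3$ and does not occur when $n\leqslant 2$. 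Summing the two contributions gives $c_n=n$ for $n\in\{1,2\}$ and $c_n=n+1$ for $n\geqslant 3$, as claimed.

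Since every step is a direct enumeration over the explicit bases, there is no substantial obstacle; the only point requiring minimal care is the bookkeeping at small $n$, namely verifying that the type-(b) elements genuinely first appear in degree $3$ and that the low-degree values $c_1=1$ and $c_2=2$ are consistent with counting the type-(a) basis elements alone.
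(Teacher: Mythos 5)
Your proposal is correct and is exactly the argument the paper intends: the corollary is stated as an immediate consequence of Theorems~\ref{theorem-T3}, \ref{theorem-N6}, \ref{theorem-N5}, obtained by counting the multilinear elements (multidegree $1^n$) in the explicit multihomogeneous bases of the relatively free algebras, which is precisely your enumeration ($n$ elements of the left/right-normed type, plus the single extra element $(x_1(x_2x_3))x_4\cdots x_n$ of type (b) once $n\geqslant 3$ for $\mathbf{N}_5$ and $\mathbf{N}_6^{\ell}$). Your preliminary observation that a multihomogeneous basis of $\CCX/\Id(\A)$ restricts to a basis of each multidegree component, hence of $P_n(\A)$, is the correct justification and is implicitly used by the paper as well.
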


%=================================================================
\section{Polynomial images}\label{section_image}

Assume that  $\A$ is an algebra and $f=f(x_1,\ldots,x_n)\in\CCX$ is a polynomial. Then the image of $\A$ with respect to $f$ is 
$$f(\A)=\{f(a_1,\ldots,a_n)\,|\, a_1,\ldots,a_n\in\A\}.$$ 
We consider the image of a two-dimensional Novikov algebra $\A$ with respect to a multilinear polynomial $f$ of multidegree $1^n$. Assume that $\B$ is a subset of $\CCX$ from Propositions~\ref{prop-T2}, \ref{prop-N1-N2-N3}, \ref{prop-N4} or Theorems~\ref{theorem-T3}, \ref{theorem-N6}, \ref{theorem-N5}, i.e., the image of $\B$ in the relatively free algebra  $\CCX/\Id(\A)$ is its basis. Without loss of generality, we can assume that $f$ lies in the $\CC$-span of $\B$, since there exists a multilinear polynomial $h$ from $\CC$-span of $\B$ with $f-h\in\Id(\A)$; in particular, $f(\A)=h(\A)$. We also assume that $f\not\in\Id(\A)$, since otherwise $f(\A)=\{0\}$. Moreover, we assume that $f\neq \al x_1$, where  $\al\in\CC\backslash\{0\}$, since otherwise $f(\A)=\A$. In other word, we have $n>1$.

%-----------------------------------------------------------------
\begin{theorem}\label{theo_image} Consider a two-dimension Novikov algebra $\A$ and $f\in\CCX$ of multidegree $1^n$, which is not a polynomial identity for $\A$, where $n>1$. Assume that $f$ lies in the $\CC$-span of $\B$ (see above). Then
\begin{enumerate}
\item[$\bullet$] $f(\mathbf{T}_2)=\CC e_2$;

\item[$\bullet$] $f(\A)=\CC e_1$ in case $\A\in\{\mathbf{T}_3,\,\mathbf{N}_2\}$;

\item[$\bullet$] $f(\A)=\A$ in case 
$\A\in\{\mathbf{N}_1,\,\mathbf{N}_3\}$;

\item[$\bullet$] $f(\A)=\CC e_1$ in case $s(f)=0$ and $\A\in\{\mathbf{N}_4,\;\mathbf{N}_5,\;\mathbf{N}_6^{\ell}\}$; 

\item[$\bullet$] $f(\A)=\A$ in case $s(f)\neq0$  and $\A\in\{\mathbf{N}_4,\;\mathbf{N}_5\}$; 

\item[$\bullet$] $f(\mathbf{N}_6^{\ell})=\CC e_2$, if $n=2$, $\ell = -1$ and $f= \al v_{1,n} + \al v_{2,n}$ for some non-zero $\al\in\CC$;

\item[$\bullet$] $f(\mathbf{N}_6^{\ell})=\CC e_2$, if $n>2$ and there exists non-zero $\al\in\CC$ such that
$$
\frac{1}{\al}f= \frac{\ell^2 +n\ell - \ell + 1}{\ell^2} v_{1,n} + v_{2,n}+
\frac{2\ell - n \ell -1}{\ell} v_{3,n} + \sum_{i=4}^n v_{i,n} + \frac{\ell - n \ell -1}{\ell^2} w_n;
$$
\item[$\bullet$] $f(\mathbf{N}_6^{\ell})=\mathbf{N}_6^{\ell}$ in case $s(f)\neq0$ and the conditions on $f$ from the previous two items do not hold. 
\end{enumerate}
Here $\ell\in\CC\backslash\{0,1\}$.
\end{theorem}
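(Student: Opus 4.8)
The plan is to treat each algebra separately and reduce every image computation to explicit evaluation on the basis $\B$, which already parametrizes multilinear polynomials modulo $\Id(\A)$. Since $f$ is multilinear of degree $n>1$ and lies in the $\CC$-span of $\B$, in each case I would first write $f$ concretely in terms of the basis monomials $v_{i,n}$ and $w_n$ (for the nonassociative algebras) or the commutative monomials (for the associative ones). The key structural observation to exploit throughout is that every output $f(a_1,\dots,a_n)$ is a $\CC$-linear combination of products of the $a_i$, and because each algebra has a distinguished one-dimensional ideal $\CC e_1$ with specific multiplication behavior, most products land predictably in $\CC e_1$ or in $\CC e_2 + \CC e_1$.

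\textbf{Easy cases first.} For $\mathbf{T}_2, \mathbf{N}_1, \mathbf{N}_2, \mathbf{N}_3$ and for $\mathbf{T}_3$, I would dispose of the image by evaluating on basis elements $e_1,e_2$. For the associative commutative algebras the image is determined by the single surviving monomial; e.g.\ $f(\mathbf{T}_2)=\CC e_2$ because every degree-$n$ product with $n\geqslant 3$ vanishes and the degree-$2$ part evaluates into $\CC e_2$. For $\mathbf{N}_1$ and $\mathbf{N}_3$ I would show the image is all of $\A$ by producing substitutions giving both $e_1$ and $e_2$ as outputs, using the idempotents. The sign-of-$s(f)$ dichotomy for $\mathbf{N}_4, \mathbf{N}_5$ rests on the fact that $s(f)$ is exactly the coefficient with which $f(e_2,\dots,e_2)$ hits the $e_2$-component: if $s(f)=0$ the output stays in the null ideal $\CC e_1$, and if $s(f)\neq 0$ one can scale to recover $e_2$ (and then $e_1$ separately) so the image is $\A$. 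The containment $f(\A)\subseteq \CC e_1$ when $s(f)=0$ follows from \eqref{eq_43}-type identities showing the $e_2$-coefficient of every output equals $s(f)$ times a fixed quantity; surjectivity onto $\CC e_1$ follows from evaluating on the $\un a_j$.

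\textbf{The hard case: $\mathbf{N}_6^{\ell}$.} The genuine difficulty is the algebra $\mathbf{N}_6^{\ell}$, where the image need \emph{not} be all of $\A$ even when $s(f)\neq 0$: there are exceptional linear conditions on the coefficients of $f$ under which the image collapses to $\CC e_2$. Here I would parametrize $f = \sum_i \al_i v_{i,n} + \be\, w_n$ and compute the two component functions $a\mapsto (f(\text{evaluation})\text{'s }e_1\text{-coefficient})$ and the analogous $e_2$-coefficient as functions of a general input. The strategy is to restrict to the two-dimensional input space: since $\A$ is two-dimensional, each $a_i\in\{$generic combination of $e_1,e_2\}$, and multilinearity plus the multiplication table reduce $f(a_1,\dots,a_n)$ to a polynomial expression whose $e_1$-coefficient is the true obstruction. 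The image is a vector space iff the $e_1$-coefficient, viewed as a function on inputs, either always vanishes on the locus where the $e_2$-coefficient is fixed, or is surjective. The exceptional formulas in the statement are precisely the solutions of the linear system forcing the $e_1$-coefficient of $f(e_2,\dots,e_2)$ and its first-order variations to align so that only $\CC e_2$ is attained; I would derive them by imposing that the map $a\mapsto e_1$-coefficient is identically determined by the $e_2$-coefficient. Concretely I expect to evaluate on inputs $\un a_j$ and on all-$e_2$ inputs (using \eqref{eq_vi} and \eqref{eq_w}) to set up a linear system in $\al_1,\dots,\al_n,\be$; solving it yields the displayed coefficient vector.

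\textbf{The main obstacle.} The crux will be verifying, in the generic $\mathbf{N}_6^{\ell}$ subcase, that when none of the exceptional conditions hold the image is genuinely all of $\mathbf{N}_6^{\ell}$ rather than a proper subspace. This requires showing that the pair of maps (the $e_1$- and $e_2$-coefficients of $f$ evaluated on varying inputs) is jointly surjective onto $\CC^2$, which amounts to exhibiting two input tuples whose output vectors are linearly independent in $\A$. The bookkeeping of how the off-diagonal multiplication $e_2 e_1 = \ell e_1$ propagates through the left- and right-normed products $v_{i,n}$ and $w_n$ is where the $\ell$-dependent coefficients $\tfrac{\ell^2+n\ell-\ell+1}{\ell^2}$ etc.\ originate, and getting these exactly right — including the special role of position $3$ in $w_n$ recorded in \eqref{eq_w} — is the delicate computational heart of the argument. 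I would organize this by first computing $f(\un a_j)$ for each $j$ as a scalar multiple of $e_1$, then computing the $e_2$-component separately, and finally checking the rank of the resulting evaluation matrix.
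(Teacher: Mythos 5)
Your overall strategy coincides with the paper's: write $f$ in the basis $\B$ as $\sum_i \al_i v_{i,n} + \al_0 w_n$, evaluate on generic arguments $b_i = \be_i e_1 + \be'_i e_2$, observe that the $e_2$-component of every value is $\be'_1\cdots\be'_n\, s(f)$, and for $\mathbf{N}_6^{\ell}$ derive the exceptional coefficient vector by solving the linear system coming from the evaluations at all-$e_2$ inputs and at the tuples $\un{a}_j$ (this is exactly the paper's claim~\eqref{claim_image_N6}); multilinearity together with the fact that $\CC e_1$ is an ideal with null square does reduce everything to this finite evaluation data, as you propose.

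However, there is a genuine logical gap in every one of your surjectivity steps. You argue that $f(\A)=\A$ by ``producing substitutions giving both $e_1$ and $e_2$ as outputs'' ($\mathbf{N}_1,\mathbf{N}_3$), by scaling ``to recover $e_2$ (and then $e_1$ separately)'' ($\mathbf{N}_4,\mathbf{N}_5$), and, in the crucial generic case for $\mathbf{N}_6^{\ell}$, you assert that joint surjectivity onto $\CC^2$ ``amounts to exhibiting two input tuples whose output vectors are linearly independent in $\A$.'' This reduction is false: the image of a multilinear polynomial is a priori only a cone (closed under scaling any one argument), not a subspace, so containing two linearly independent vectors --- or containing $e_1$ and $e_2$ --- does not imply containing their sums. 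Indeed, the fact that these images are subspaces is precisely the content of Corollary~\ref{cor_image}, which is a consequence of the theorem and cannot be presupposed. The paper closes this step by exhibiting, in each case, a two-parameter family of inputs whose outputs sweep all of $\A$ at once: for $\mathbf{N}_4$, $f(b_1,e_2,\ldots,e_2)=\al_1\be_1 e_1 + \be'_1 s(f)\, e_2$ with $(\be_1,\be'_1)$ arbitrary; for $\mathbf{N}_6^{\ell}$, choosing $i$ with $\ga_i\neq0$ and computing $f(e_2,\ldots,b_i,\ldots,e_2)=\be_i\ga_i\, e_1+\be'_i s(f)\, e_2$; and for $\mathbf{N}_1$, taking $b_1$ generic and $b_2=\cdots=b_n=e_1+e_2$. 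Your computational setup would produce exactly these formulas, but as written the proposal replaces this step with an invalid spanning argument, which is the one point where the theorem's content actually lives.
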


%-----------------------------------------------------------------
\begin{cor}\label{cor_image}
The multilinear image of a two-dimensional Novikov algebra is a linear space.
\end{cor}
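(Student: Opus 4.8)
The plan is to deduce Corollary~\ref{cor_image} directly from Theorem~\ref{theo_image}, which already enumerates every multilinear image explicitly. The key observation is that a subset $S$ of a two-dimensional vector space is a linear subspace if and only if $S$ is one of $\{0\}$, a one-dimensional span $\CC u$ for some $u\neq0$, or the whole space; and Theorem~\ref{theo_image} exhibits each image $f(\A)$ as precisely one of these shapes. So the real content is already contained in the theorem, and the corollary is essentially a bookkeeping argument over the finite list of algebras from Theorem~\ref{theor-bai-meng}.

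First I would dispose of the degenerate cases not covered by the hypotheses of Theorem~\ref{theo_image}. The algebra $\mathbf{T}_1$ has trivial multiplication, so for a multilinear $f$ of degree $n>1$ we get $f(\mathbf{T}_1)=\{0\}$, which is a subspace; and for $n=1$ every algebra gives $f(\A)=\A$. More generally, if $f\in\Id(\A)$ then $f(\A)=\{0\}$, and if $f=\al x_1$ with $\al\neq0$ then $f(\A)=\A$; both are subspaces. These are exactly the cases excluded in the setup preceding Theorem~\ref{theo_image}, so they must be handled separately but are immediate. This reduces the problem to a multilinear $f$ of degree $n>1$ that is not an identity and is not a scalar multiple of $x_1$, which is precisely the situation Theorem~\ref{theo_image} addresses.

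Next I would run through the bullet list of Theorem~\ref{theo_image} and verify that each asserted image is visibly a subspace. Each entry is one of $\CC e_1$, $\CC e_2$, or the full algebra $\A$ (equivalently $\mathbf{T}_2$, $\mathbf{N}_1$, etc.), and each of these is by definition a linear subspace of the two-dimensional space $\A$. The only subtlety is that the several sub-cases for $\mathbf{N}_6^{\ell}$ are governed by mutually exclusive and exhaustive conditions on $f$ (namely $s(f)=0$ versus $s(f)\neq0$, refined by the two special linear-combination conditions); I would note that the theorem's case analysis is complete precisely because the final bullet is stated as a catch-all (\emph{the conditions from the previous two items do not hold}), so every admissible $f$ falls into exactly one bullet and receives a subspace as its image.

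The main obstacle, such as it is, is not in the proof of the corollary but lies upstream: all the genuine work is in establishing Theorem~\ref{theo_image} itself, especially the delicate $\mathbf{N}_6^{\ell}$ computations where the image can collapse to $\CC e_2$ only for exceptional coefficient vectors. Granting that theorem, the corollary is a one-line consequence, and I would phrase the proof as simply observing that in every case listed in Theorem~\ref{theo_image}, together with the excluded trivial cases, the image $f(\A)$ is one of $\{0\}$, $\CC e_1$, $\CC e_2$, or $\A$, each of which is a linear subspace of $\A$.
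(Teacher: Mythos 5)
Your proposal is correct and matches the paper's approach: the paper derives Corollary~\ref{cor_image} immediately from Theorem~\ref{theo_image}, since every image listed there (together with the degenerate cases $f\in\Id(\A)$, $f=\al x_1$, and $\mathbf{T}_1$ disposed of in the setup of Section~\ref{section_image}) is $\{0\}$, $\CC e_1$, $\CC e_2$, or $\A$, each a linear subspace. Your explicit bookkeeping of the excluded trivial cases and of the exhaustiveness of the $\mathbf{N}_6^{\ell}$ case analysis is exactly the (unwritten) argument the paper intends.
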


We split the proof of Theorem~\ref{theo_image} into Propositions~\ref{prop_image1}, \ref{prop_image_N4}, \ref{prop_image_N5}, \ref{prop_image_N6} (see below).

%-----------------------------------------------------------------
\begin{prop}\label{prop_image1} Consider $f\in\CCX$ as in Theorem~\ref{theo_image}.  Then 
\begin{enumerate}
\item[$\bullet$] $f(\A)=\CC e_2$ in case $\A=\mathbf{T}_2$;

\item[$\bullet$] $f(\A)=\CC e_1$ in case $\A\in\{\mathbf{T}_3,\,\mathbf{N}_2\}$;

\item[$\bullet$] $f(\A)=\A$ in case 
$\A\in\{\mathbf{N}_1,\,\mathbf{N}_3\}$.
\end{enumerate}
\end{prop}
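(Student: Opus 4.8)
The plan is to exploit the explicit bases $\B$ furnished by Propositions~\ref{prop-T2} and~\ref{prop-N1-N2-N3} and Theorem~\ref{theorem-T3} to pin down the precise shape of $f$ in each case, and then to evaluate $f$ on a generic pair of arguments $a_i=\alpha_i e_1+\beta_i e_2$ (with $\alpha_i,\beta_i\in\CC$) and read the image off directly. The point common to all cases is that the assumption ``$f$ lies in the $\CC$-span of $\B$'' makes the multidegree-$1^n$ part of $f$ nearly trivial to describe.

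First I would treat the three commutative associative algebras $\mathbf{N}_1,\mathbf{N}_2,\mathbf{N}_3$. By Proposition~\ref{prop-N1-N2-N3} the only element of $\B$ of multidegree $1^n$ is the single ordered monomial $x_1x_2\cdots x_n$, so the hypotheses force $f=\al\,x_1x_2\cdots x_n$ with $\al\neq0$, and since each algebra is commutative and associative, $f(a_1,\dots,a_n)=\al\,a_1a_2\cdots a_n$ is just $\al$ times the honest product in $\A$. It then remains to compute all $n$-fold products. In $\mathbf{N}_2$ the element $e_1$ is idempotent while $e_2$ annihilates everything, giving $a_1\cdots a_n=(\alpha_1\cdots\alpha_n)e_1$ and hence $f(\mathbf{N}_2)=\CC e_1$. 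In $\mathbf{N}_1$ the $e_i$ are orthogonal idempotents, so $a_1\cdots a_n=(\prod_i\alpha_i)e_1+(\prod_i\beta_i)e_2$; in $\mathbf{N}_3$ the element $e_1$ is a unit and $e_2^2=0$, so $a_1\cdots a_n=(\prod_i\alpha_i)e_1+\big(\sum_j\beta_j\prod_{i\neq j}\alpha_i\big)e_2$. In both of these latter cases I would check surjectivity by choosing $\alpha_1$ to produce the desired $e_1$-coefficient and then adjusting $\beta_1$ to produce an arbitrary $e_2$-coefficient, concluding $f(\mathbf{N}_1)=\mathbf{N}_1$ and $f(\mathbf{N}_3)=\mathbf{N}_3$.

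For $\mathbf{T}_2$ I would first note that, by Proposition~\ref{prop-T2}, the set $\B$ contains no element of multidegree $1^n$ once $n\geqslant3$; together with $f\notin\Id(\mathbf{T}_2)$ and $n>1$ this forces $n=2$ and $f=\al\,x_1x_2$ with $\al\neq0$, whence $f(a_1,a_2)=\al\,\alpha_1\alpha_2\,e_2$ and the image is $\CC e_2$. For $\mathbf{T}_3$ and $\mathbf{N}_2$ the decisive observation is that every binary product lands in the line $\CC e_1$ (directly from the tables: $ab=-\alpha_2\beta_1 e_1$ in $\mathbf{T}_3$ and $ab=\alpha_1\beta_1 e_1$ in $\mathbf{N}_2$). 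Since $f$ is multilinear of degree $n\geqslant2$, each of its monomials is a product of two nonempty factors, so $f(\A)\subseteq\CC e_1$; and since $f\notin\Id(\A)$ there is a substitution with $f(a_1,\dots,a_n)=c e_1$ for some $c\neq0$, whose scalar multiples are obtained by replacing $a_1$ with $\lambda a_1$ and using multilinearity, so the whole line $\CC e_1$ is attained.

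I do not expect a genuine obstacle: everything reduces to the three-line evaluations above once $\B$ is used to fix the form of $f$. The only step requiring slight care is the surjectivity argument for $\mathbf{N}_3$, where the $e_2$-coefficient couples the $\alpha_i$ and $\beta_j$, but the decoupling just described handles it cleanly. The argument tacitly relies on Remark~\ref{remark_key} only through the already-established fact that the listed sets are bases, which is exactly what licenses replacing $f$ by its scalar-multiple-of-a-basis-monomial representative.
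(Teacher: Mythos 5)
Your proof is correct and follows essentially the same route as the paper: for $\mathbf{T}_3$ and $\mathbf{N}_2$ the observation that all products land on the line $\CC e_1$ combined with multilinearity and $f\notin\Id(\A)$, and for $\mathbf{N}_1,\mathbf{N}_3$ the reduction of $f$ to $\al\, x_1\cdots x_n$ via the basis followed by evaluation on generic elements. The only cosmetic difference is $\mathbf{T}_2$, which the paper also dispatches by the line argument (every product lies in $\CC e_2$), whereas you force $n=2$ from the degree bound in the basis and compute directly; both are valid.
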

\begin{proof} Assume  $\A\in\{ \mathbf{T}_2,\,\mathbf{T}_3,\,\mathbf{N}_2 \}$. Considering the multiplication table of $\A$ we can see that $f(\A)\subset \CC e_i$ for some $i=1,2$. Since $f$ is not a polynomial identity for $\A$ and $f$ is multilinear, we obtain that $f(\A)= \CC e_i$. The required statement is proven.

Assume $\A\in\{\mathbf{N}_1,\,\mathbf{N}_3\}$. By Proposition~\ref{prop-N1-N2-N3}, we have $f=\al x_1\cdots x_n$ for a non-zero $\al\in\CC$. For each $1\leqslant i\leqslant n$ consider $b_i=\be e_1 + \be'_i e_2\in\A$ with $\be_i,\be'_i\in\CC$.

If $\A=\mathbf{N}_1$, then $f(b_1,\ldots,b_n)=\al (\be_1\cdots \be_n e_1 + \be'_1\cdots \be'_n e_2)$ and the required statement is proven.

If $\A=\mathbf{N}_3$, then $f(b_1,e_1,\ldots,e_1)=\al (\be_1 e_1 + \be'_1 e_2)$ and the required statement is proven.
\end{proof}

%-----------------------------------------------------------------
\begin{prop}\label{prop_image_N4}  Consider $f\in\CCX$ as in Theorem~\ref{theo_image}. Then for $\A=\mathbf{N}_4$ we have 
\begin{enumerate}
\item[$\bullet$] $f(\A)=\CC e_1$ in case $s(f)=0$; 

\item[$\bullet$] $f(\A)=\A$ in case $s(f)\neq0$. 
\end{enumerate}
\end{prop}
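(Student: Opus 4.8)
The plan is to analyze the image $f(\mathbf{N}_4)$ by exploiting the explicit basis from Proposition~\ref{prop-N4}, which tells us that modulo $\Id(\mathbf{N}_4)$ we may write $f$ as a $\CC$-linear combination of the multilinear monomials $(x_{i} x_{j_1}\cdots x_{j_{n-1}})_L$ with $1\leqslant j_1 \leqslant \cdots \leqslant j_{n-1}$. First I would record the key multiplication facts in $\mathbf{N}_4$: since $e_1 e_2 = e_1$, $e_2 e_2 = e_2$, and $e_1 e_1 = e_2 e_1 = 0$, the subspace $\CC e_1$ is an ideal with zero product, $\CC e_2$ is the subalgebra generated by an idempotent, and for arbitrary $b_k = \be_k e_1 + \be'_k e_2 \in \mathbf{N}_4$ any left-normed product evaluates very cleanly. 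The main computation is to substitute general elements $b_1,\ldots,b_n$ into a left-normed monomial and read off the $e_1$- and $e_2$-components; because $e_2$ is idempotent and absorbs $e_1$ on the right, one finds that the $e_2$-coefficient of $f(b_1,\ldots,b_n)$ is (up to the structure of $f$) the product $\be'_1\cdots\be'_n$ scaled by $s(f)$, which is exactly why the sum of coefficients governs the two cases.

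For the case $s(f)=0$, I would argue that the $e_2$-component of every evaluation vanishes, forcing $f(\mathbf{N}_4)\subseteq \CC e_1$; combined with $f\notin\Id(\mathbf{N}_4)$ and multilinearity (so that the image is closed under scaling each argument), this gives $f(\mathbf{N}_4)=\CC e_1$. Concretely, one checks that for each monomial in the basis expansion, evaluating on tuples built from $e_1$ and $e_2$ produces an $e_2$-term whose total coefficient is a multiple of $s(f)$, so $s(f)=0$ kills all $e_2$-contributions. The remaining task is to exhibit a single tuple on which $f$ takes a nonzero value in $\CC e_1$, which is available precisely because $f$ is not an identity.

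For the case $s(f)\neq 0$, the goal is to produce both a vector with nonzero $e_2$-component and, by adjusting parameters, an arbitrary element of $\mathbf{N}_4$. The natural move is to evaluate on $b_k = \be_k e_1 + e_2$ (all $e_2$-coordinates equal to one), which makes the $e_2$-component equal to $s(f)\neq0$, and then to vary the $\be_k$ to control the $e_1$-component independently; since the image is a cone under simultaneous scaling and we can hit a point off the line $\CC e_1$ while also sweeping out the $e_1$-direction, we conclude $f(\mathbf{N}_4)=\mathbf{N}_4$. I expect the main obstacle to be the bookkeeping in the general evaluation of a left-normed monomial: one must verify carefully that the $e_2$-coefficient of $f(b_1,\ldots,b_n)$ is genuinely proportional to $s(f)$ (and not merely to some partial sum of coefficients depending on the bracketing), and that the $e_1$-component can be tuned freely and surjectively; establishing this linear-algebra surjectivity cleanly, rather than by brute-force monomial-by-monomial expansion, is where the argument needs the most care.
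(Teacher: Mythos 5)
Your overall route coincides with the paper's: expand $f$ in the basis of Proposition~\ref{prop-N4} as $f=\sum_{i=1}^n\al_i v_{i,n}$, evaluate on arbitrary $b_k=\be_k e_1+\be'_k e_2$, and observe that the $e_2$-component of $f(b_1,\ldots,b_n)$ is $\be'_1\cdots\be'_n\, s(f)$ while the $e_1$-component is $\sum_{i=1}^n \al_i\,\be'_1\cdots\be'_{i-1}\be_i\be'_{i+1}\cdots\be'_n$. Your worry about bracketing is unfounded precisely because the basis monomials are left-normed, so every one of them contributes $e_2$-coefficient $\be'_1\cdots\be'_n$; and your handling of the case $s(f)=0$ (containment in $\CC e_1$, plus a nonzero value from $f\notin\Id(\mathbf{N}_4)$ and closure of the image under scaling) is correct and matches the paper in substance.

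There is, however, a genuine gap in your concluding step for the case $s(f)\neq0$. Evaluating on $b_k=\be_k e_1+e_2$ produces exactly the affine line $L=\{c\,e_1+s(f)\,e_2 \,:\, c\in\CC\}$, and the cone over $L$ (the image is indeed closed under scalar multiplication, by multilinearity) equals $\{a\,e_1+b\,e_2 \,:\, b\neq 0\}\cup\{0\}$; it contains no nonzero point of $\CC e_1$. So ``cone over a line off $\CC e_1$'' does not yield all of $\mathbf{N}_4$. The repair is easy and stays inside your own computation: since $f\notin\Id(\mathbf{N}_4)$, some $\al_k\neq0$; setting $\be'_k=0$, $\be_k=t$, and all other arguments equal to $e_2$ kills the $e_2$-component and leaves $e_1$-component $\al_k t$, whence $\CC e_1\subseteq f(\mathbf{N}_4)$, and together with the cone this gives everything. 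The paper avoids the issue in one stroke: after arranging $\al_1\neq0$ (permissible by renaming variables, using the identity $x_1[x_2,x_3]$), it evaluates $f(b_1,e_2,\ldots,e_2)=\al_1\be_1 e_1+\be'_1 s(f)\, e_2$, which is already surjective onto $\mathbf{N}_4$ because the two coordinates $\be_1,\be'_1$ of the single argument $b_1$ control the $e_1$- and $e_2$-components independently.
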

\begin{proof} By Proposition~\ref{prop-N4}, we have $f=\sum_{i=1}^n \al_i v_{i,n}$, where $\al_i\in\CC$ and $v_{i,n}$ were defined in~\eqref{eq_notations}. Since $f$ is non-zero, using polynomial identity $x_1[x_2,x_3]$  we can assume that $\al_1\neq0$. For each $1\leqslant i\leqslant n$ consider $b_i=\be e_1 + \be'_i e_2\in \mathbf{N}_4$ with $\be_i,\be'_i\in\CC$. Since $e_k e_1=0$ for every $k=1,2$ and $b e_2 =b$ for every $b\in \mathbf{N}_4$, we have  that $v_{1,n}(b_1,\ldots,b_n)=b_1 \cdots b_n$ is equal to  
$$\be_1 \be'_2 \cdots \be'_n e_1e_2\cdots e_2 + \be'_1 \cdots \be'_n e_2\cdots e_2 = \be_1 \be'_2 \cdots \be'_n e_1 + \be'_1 \cdots \be'_n e_2.$$
\noindent{}Therefore, 
$$
\begin{array}{rcl}
f(b_1,\ldots,b_n) & = & \sum\limits_{i=1}^n \al_i \left(\be'_1 \cdots \be'_{i-1} \be_i \be'_{i+1}\cdots \be'_n e_1 + \be'_1\cdots \be'_n e_2 \right) \\
& = & \Big(\sum\limits_{i=1}^n \al_i \be'_1\cdots \be'_{i-1}\be_i\be'_{i+1}\cdots \be'_n\Big)e_1 + \be'_1\cdots \be'_n s(f)e_2.\\
\end{array}
$$
Hence, in case $s(f)=0$ we have $f(\mathbf{N}_4)=\CC e_1$. In case $s(f)\neq0$ we have $f(b_1,e_2,\ldots,e_2)=\al_1\be_1 e_1 + \be'_1 s(f) e_2$; thus, $f(\mathbf{N}_4)=\mathbf{N}_4$.
\end{proof}

%-----------------------------------------------------------------
\begin{prop}\label{prop_image_N5}  Consider $f\in\CCX$ as in Theorem~\ref{theo_image}. Then for $\A=\mathbf{N}_5$ we have
\begin{enumerate}
\item[$\bullet$] $f(\A)=\CC e_1$ in case $s(f)=0$; 

\item[$\bullet$] $f(\A)=\A$ in case $s(f)\neq0$. 
\end{enumerate}
\end{prop}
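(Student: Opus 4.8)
The plan is to mimic the structure of the proof of Proposition~\ref{prop_image_N4} for $\mathbf{N}_4$, adapting it to the multiplication table of $\mathbf{N}_5$. By Theorem~\ref{theorem-N5}, the set $\B$ for $\mathbf{N}_5$ coincides with the set $\B$ from Theorem~\ref{theorem-N6}, so a general $f$ in the $\CC$-span of $\B$ has the form $f=\sum_{i=1}^n \al_i v_{i,n} + \be\, w_n$, where $v_{i,n}$ and $w_n$ are the monomials defined in~\eqref{eq_notations} and $\al_1,\ldots,\al_n,\be\in\CC$. First I would evaluate $f$ on arbitrary elements $b_i=\be_i e_1 + \be'_i e_2\in\mathbf{N}_5$, using the multiplication table $e_1 e_1=0$, $e_1 e_2=e_1$, $e_2 e_1=0$, $e_2 e_2=e_1+e_2$. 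The key feature of $\mathbf{N}_5$ (exactly as recorded in~\eqref{eq_vi_new} and~\eqref{eq_w_new}) is that right-multiplication by $e_1$ kills everything, while the ``diagonal'' contribution to the $e_2$-component accumulates a factor governed by $s(f)$.

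The central computation is to extract the coefficient of $e_2$ in $f(b_1,\ldots,b_n)$. Since in $\mathbf{N}_5$ the only product contributing an $e_2$ is an all-$e_2$ evaluation, each monomial of multidegree $1^n$ evaluated at the $b_i$ contributes $\be'_1\cdots\be'_n\, e_2$ to the $e_2$-part, so the total $e_2$-component of $f(b_1,\ldots,b_n)$ equals $\be'_1\cdots\be'_n\,s(f)\,e_2$, where $s(f)=\sum_i\al_i+\be$ is the sum of coefficients. This is precisely the dichotomy I want: if $s(f)=0$ then $f(\mathbf{N}_5)\subseteq\CC e_1$, and if $s(f)\neq0$ then by choosing all $\be'_i\neq0$ one can realize a nonzero $e_2$-component. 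The remaining work is to verify both inclusions are equalities.

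For the case $s(f)=0$, after establishing $f(\mathbf{N}_5)\subseteq\CC e_1$, I would argue that $f(\mathbf{N}_5)$ is not $\{0\}$: since $f\notin\Id(\mathbf{N}_5)$ there is some evaluation giving a nonzero value, which must then lie in $\CC e_1$, and multilinearity lets us scale to cover all of $\CC e_1$; hence $f(\mathbf{N}_5)=\CC e_1$. For the case $s(f)\neq0$, I would exhibit a concrete evaluation producing a vector with nonzero $e_2$-component together with an independently controllable $e_1$-component, so that $f(\mathbf{N}_5)=\mathbf{N}_5$; the natural choice mirroring the $\mathbf{N}_4$ proof is to evaluate at $b_1$ generic and $b_2=\cdots=b_n=e_2$, yielding something of the form $(\text{coeff})\be_1 e_1 + \be'_1 s(f)e_2$, where varying $\be_1,\be'_1$ sweeps out both coordinates. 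Here one should check that the coefficient of $\be_1 e_1$ in this specialized evaluation is nonzero; using the polynomial identity $x_1[x_2,x_3]$ (which holds for $\mathbf{N}_5$ by Remark~\ref{remark_identities-N5}) we may, as in the $\mathbf{N}_4$ proof, normalize so that $\al_1\neq0$, making the $e_1$-coefficient nonzero.

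The main obstacle I anticipate is the bookkeeping for the $e_1$-component, which is genuinely more involved than for $\mathbf{N}_4$ because the product $e_2 e_2=e_1+e_2$ feeds $e_1$ back in from the diagonal term, so unlike $\mathbf{N}_4$ the $e_1$-part of a monomial evaluation receives contributions not only from the single slot set to $e_1$ but also from the $e_2$-chains. I would handle this by treating the $e_1$-coefficient as a polynomial in the $\be_i,\be'_i$ and isolating the linear-in-$\be_1$ part under the specialization $b_2=\cdots=b_n=e_2$, which suffices to see surjectivity without computing the full expression. The bracket-annihilation identity $x_1[x_2,x_3]\equiv0$ is the tool that keeps this manageable, exactly as it did in Proposition~\ref{prop_image_N4}.
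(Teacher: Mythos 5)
Your overall structure (the decomposition $f=\sum_{i=1}^n\al_i v_{i,n}+\al_0 w_n$ from Theorem~\ref{theorem-N5}, the observation that the $e_2$-component of any monomial evaluation is the product of the $e_2$-components of its arguments, hence the $e_2$-part of $f(b_1,\ldots,b_n)$ is $\be'_1\cdots\be'_n\,s(f)\,e_2$) and your treatment of the case $s(f)=0$ are correct and agree with the paper. The genuine gap is in the case $s(f)\neq0$ when $n\geqslant3$. Under your specialization $b_2=\cdots=b_n=e_2$, the $e_1$-component of $f(b_1,e_2,\ldots,e_2)$ equals $(\al_1+\al_0)\be_1+\al\be'_1$, where $\al=(n-1)\sum_i\al_i+(n-2)\al_0$: the monomial $w_n$ contributes $\al_0\be_1$ to the coefficient of $\be_1$, because $w_n(e_1,e_2,\ldots,e_2)=e_1$, while placing $e_1$ in any other slot of $w_n$ gives $0$ (right multiplication by $e_1$ is zero in $\mathbf{N}_5$). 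So the relevant coefficient is $\al_1+\al_0$, not $\al_1$, and ``normalize $\al_1\neq0$'' does not make it nonzero. Concretely, for $f=v_{1,3}+v_{2,3}-w_3$ one has $s(f)=1\neq0$ and $\al_1=1\neq0$, yet $f(b_1,e_2,e_2)=\be'_1(3e_1+e_2)$ sweeps only a line, so your specialization cannot prove surjectivity there. Worse, the normalization can destroy a good situation: for $f=w_3$ the slot-$1$ specialization works as it stands (its coefficient is $\al_1+\al_0=s(f)$), but renaming $x_1\leftrightarrow x_2$ and rewriting in the basis $\B$ via congruence~\eqref{eq_lemma_N6_1_N5} yields $w_3-v_{1,3}+v_{2,3}$, for which $\al_1+\al_0=0$. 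This also shows the renaming step itself is not as innocuous as in Proposition~\ref{prop_image_N4}: permuting variables mixes $w_n$ into the $v_{i,n}$'s and changes all the coefficients.

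The repair is the two-case argument the paper uses. If $\al_i\neq0$ for some $2\leqslant i\leqslant n$, specialize every variable except the $i$-th to $e_2$; then the value is $(\al_i\be_i+\al\be'_i)e_1+\be'_i s(f)e_2$, because the $w_n$-term contributes nothing linear in $\be_i$ (its off-diagonal $e_1$-contribution involves only $\be_1$), and since $\al_i s(f)\neq0$ this map in $(\be_i,\be'_i)$ is onto $\mathbf{N}_5$. If instead $\al_2=\cdots=\al_n=0$, then $\al_1+\al_0=s(f)\neq0$ and your slot-$1$ specialization does work, giving $(s(f)\be_1+\al\be'_1)e_1+\be'_1 s(f)e_2$. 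With this case distinction inserted (and the $n=2$ case, where there is no $w_n$ and your argument is already fine), your proof becomes the paper's proof.
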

\begin{proof} By Theorem~\ref{theorem-N5}, we have $f=\sum_{i=1}^n \al_i v_{i,n} + \al_0 w_n$, where $\al_i\in\CC$. 
%Since $f$ is non-zero, using polynomial identity $x_1[x_2,x_3]$  we can assume that $\al_1\neq0$. 
For each $1\leqslant i\leqslant n$ consider $b_i=\be e_1 + \be'_i e_2\in \mathbf{N}_5$ with $\be_i,\be'_i\in\CC$. Note that $e_ke_1=0$ in $\mathbf{N}_5$ for every $k=1,2$.

Assume $n=2$. Then $f=\al_1 x_1 x_2 + \al_2 x_2 x_1$. Permuting $x_1$ and $x_2$, without loss of generality, we can assume that $\al_1\neq0$. Then 
$$
\begin{array}{rcl}
f(b_1,b_2) & = &\al_1 (\be_1 \be'_2 e_1e_2 + \be'_1\be'_2 e_2 e_2) + 
\al_2 (\be'_1 \be_2 e_1e_2 + \be'_1\be'_2 e_2 e_2) \\
&=& (\al_1 \be_1 \be'_2 + \al_2\be'_1\be_2 + (\al_1+\al_2)\be'_1\be'_2)e_1  +  (\al_1+\al_2)\be'_1\be'_2 e_2.\\
\end{array}
$$
Hence, $f(\mathbf{N}_5)\subset \CC e_1$ in case $\al_1+\al_2=0$. On the other hand, $f(b_1,e_2)=(\al_1 \be_1 + (\al_1+\al_2)\be'_1)e_1  +  (\al_1+\al_2)\be'_1 e_2$ concludes the proof in case $n=2$.

Assume $n\geqslant 3$. Applying equalities~\eqref{eq_43} we can see that $v_{1,n}(b_1,\ldots,b_n)=b_1 \cdots b_n$ is equal to  
$$\be_1 \be'_2 \cdots \be'_n e_1e_2\cdots e_2 + \be'_1 \cdots \be'_n e_2\cdots e_2 = 
\be_1 \be'_2 \cdots \be'_n e_1 + \be'_1 \cdots \be'_n ((n-1)e_1 + e_2)$$
and $w_n(b_1,\ldots,b_n)=(b_1(b_2b_3))b_4\cdots b_n$  is equal to  
\begin{multline*}
\be_1\be'_2\cdots \be'_n(e_1(e_2e_2))e_2\cdots e_2 + 
\be'_1\be'_2\cdots \be'_n(e_2(e_2e_2))e_2\cdots e_2 = \\
\be_1\be'_2\cdots \be'_n e_1+ 
\be'_1\be'_2\cdots \be'_n((n-2)e_1+e_2).
\end{multline*}
Therefore, $f(b_1,\ldots,b_n)$ is equal to  
$$
\Big(\sum\limits_{i=1}^n \al_i \be'_1\cdots \be'_{i-1} \be_i \be'_{i+1}\cdots \be'_n +  \al_0\be_1\be'_2\cdots \be'_n +\al \be'_1\cdots \be'_n \Big)e_1
+ \be'_1\cdots\be'_n s(f) e_2,
$$
where $\al=(n-1)\sum_{i=1}^n \al_i + (n-2)\al_0$. Thus, in case $s(f)=0$ we have $f(\mathbf{N}_5)\subset \CC e_1$, and, hence, $f(\mathbf{N}_5)=\CC e_1$. 

Assume $s(f)\neq0$. If there exist $2\leqslant i\leqslant n$ such that $\al_i\neq0$, then for $b_j=e_2$ for all $1\leqslant j\leqslant n$ with $j\neq i$ we consider
$$f(b_1,\ldots,b_n)= (\al_i \be_i + \al \be'_i)e_1 + \be'_i s(f)e_2$$
to complete the proof. If $\al_2=\cdots=\al_n=0$, then we consider
$$f(b_1,e_2,\ldots,e_2)= (\al_1 \be_1 + \al_0 \be_1 + \al \be'_1)e_1 + \be'_1 s(f)e_2 = 
(s(f)\be_1 + \al \be'_1)e_1 + \be'_1 s(f)e_2$$
to complete the proof.
\end{proof}

%-----------------------------------------------------------------
\begin{prop}\label{prop_image_N6}  Consider $f\in\CCX$ as in Theorem~\ref{theo_image}. Then for $\A=\mathbf{N}_6$ we have 
\begin{enumerate}
\item[(a)] $f(\A)=\CC e_1$, if $s(f)=0$;

\item[(b)] $f(\A)=\CC e_2$, if $n=2$, $\ell = -1$ and $f= \al v_{1,n} + \al v_{2,n}$ for some non-zero $\al\in\CC$;

\item[(c)] $f(\A)=\CC e_2$, if $n>2$ and there exists non-zero $\al\in\CC$ such that
\begin{eq}\label{eq_image_N6_c}
\frac{1}{\al}f= \frac{\ell^2 +n\ell - \ell + 1}{\ell^2} v_{1,n} + v_{2,n}+
\frac{2\ell - n \ell -1}{\ell} v_{3,n} + \sum_{i=4}^n v_{i,n} + \frac{\ell - n \ell -1}{\ell^2} w_n, 
\end{eq}
\item[(d)] $f(\A)=\A$, otherwise. 
\end{enumerate}
\end{prop}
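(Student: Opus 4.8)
The plan is to compute $f(\mathbf{N}_6^{\ell})$ directly by evaluating $f$ on a generic pair of substitutions, exactly in the spirit of the proofs of Propositions~\ref{prop_image_N4} and~\ref{prop_image_N5}. By Theorem~\ref{theorem-N6}, we may write $f=\sum_{i=1}^n \al_i v_{i,n} + \be\, w_n$ for scalars $\al_i,\be\in\CC$ (with $\be=0$ when $n=2$). For each $1\leqslant i\leqslant n$ I would set $b_i=\be_i e_1 + \be'_i e_2$ and compute $v_{i,n}(b_1,\ldots,b_n)$ and $w_n(b_1,\ldots,b_n)$ using the multiplication table of $\mathbf{N}_6^{\ell}$. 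The key preliminary computation is to record the values $v_{i,n}(\un{a}_j)$ and $w_n(\un{a}_j)$ on the substitutions $\un{a}_j=(e_2,\ldots,e_1,\ldots,e_2)$ (with $e_1$ in position $j$), which are already given in~\eqref{eq_vi} and~\eqref{eq_w}; more generally, since $\CC e_1$ is a null ideal and $e_2$ is idempotent, each monomial evaluated at the $b_i$ produces a term in $e_1$ together with a single multiple of $e_2$ whose coefficient is $\be'_1\cdots\be'_n$ times the sum of coefficients $s(f)$. This immediately yields a general formula $f(b_1,\ldots,b_n)=\Phi(\underline{\be},\underline{\be'})\,e_1 + \be'_1\cdots\be'_n\,s(f)\,e_2$ for an explicit bilinear-type expression $\Phi$.

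From this formula part~(a) follows at once: if $s(f)=0$ the $e_2$-component vanishes identically, so $f(\mathbf{N}_6^{\ell})\subseteq \CC e_1$, and since $f\notin\Id(\mathbf{N}_6^{\ell})$ and $f$ is multilinear the image is exactly $\CC e_1$. For the remaining parts I assume $s(f)\neq0$, so the $e_2$-coefficient $\be'_1\cdots\be'_n\,s(f)$ can be made non-zero; the entire question then reduces to whether the $e_1$-component $\Phi$ can be made to take any prescribed value \emph{independently} of the $e_2$-component. Concretely, $f(\mathbf{N}_6^{\ell})=\mathbf{N}_6^{\ell}$ if and only if there is a choice of the $b_i$ with $\be'_1\cdots\be'_n\neq0$ for which $\Phi$ attains two distinct values, while $f(\mathbf{N}_6^{\ell})=\CC e_2$ precisely when $\Phi$ is \emph{forced} to equal a fixed multiple of $\be'_1\cdots\be'_n\,s(f)$ whenever the $e_2$-component is non-zero. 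The strategy is therefore to isolate the dependence of $\Phi$ on the ``free'' parameters $\be_i$ (the $e_1$-coordinates of the inputs), since varying a single $\be_i$ while holding all $\be'_j$ fixed and non-zero moves $\Phi$ linearly but leaves the $e_2$-component untouched.

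The main obstacle, and the heart of parts~(b), (c), (d), is analysing when this linear dependence on the $\be_i$ degenerates. Collecting the coefficient of each $\be_i$ in $\Phi$ (after setting all $\be'_j=1$, say) gives a linear form in the $\al_i,\be$; the image collapses to $\CC e_2$ exactly when every such coefficient vanishes, i.e.\ when perturbing any input in the $e_1$-direction fails to change the output. I would set up these vanishing conditions as a linear system in $\al_1,\ldots,\al_n,\be$, solve it, and check that its solution space (intersected with $s(f)\neq0$) is precisely the one-parameter family described by~\eqref{eq_image_N6_c} for $n>2$, and by $f=\al(v_{1,n}+v_{2,n})$ with $\ell=-1$ for $n=2$. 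The delicate point is that the coefficients in this system involve $\ell$ (through the products $e_2 e_1=\ell e_1$ and the associator value $(e_2,e_2,e_1)=\ell(1-\ell)e_1$ from Remark~\ref{remark_newN6}), so the degeneracy locus genuinely depends on $\ell$; verifying that the explicit rational-in-$\ell$ coefficients in~\eqref{eq_image_N6_c} solve the system, and that no \emph{other} solution with $s(f)\neq0$ exists, is the calculation I expect to be the most error-prone. Once the degeneracy locus is pinned down, part~(d) is the complementary statement: off this locus some $\be_i$-coefficient is non-zero, so $\Phi$ surjects onto $\CC$ for suitable inputs with $\be'_1\cdots\be'_n\,s(f)\neq0$, giving two linearly independent values and hence $f(\mathbf{N}_6^{\ell})=\mathbf{N}_6^{\ell}$.
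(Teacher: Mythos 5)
Your proposal follows essentially the same route as the paper's proof: write $f=\sum_i\al_i v_{i,n}+\al_0 w_n$ via Theorem~\ref{theorem-N6}, evaluate at generic $b_i=\be_i e_1+\be'_i e_2$ to obtain $f(b_1,\ldots,b_n)=\Phi\, e_1+\be'_1\cdots\be'_n\,s(f)\,e_2$, settle part (a) at once, and detect the collapse to $\CC e_2$ by the vanishing of the coefficients of the $\be_i$ in $\Phi$ --- which is exactly the paper's system $\ga_1=\cdots=\ga_n=0$ and its claim~\eqref{claim_image_N6}, with part (d) obtained complementarily by varying a single $\be_i$ against a fixed non-zero $e_2$-component. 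What remains unwritten in your plan is only the explicit computation of those coefficients $\ga_i$ and the solution of the resulting linear system, which the paper carries out and which indeed produces the one-parameter family~\eqref{eq_image_N6_c} for $n>2$ and the $\ell=-1$, $f=\al(v_{1,2}+v_{2,2})$ case for $n=2$.
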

\begin{proof} By Theorem~\ref{theorem-N6}, we have $f=\sum_{i=1}^n \al_i v_{i,n} + \al_0 w_n$, where $\al_i\in\CC$. 
For each $1\leqslant i\leqslant n$ consider $b_i=\be e_1 + \be'_i e_2\in \mathbf{N}_6$ with $\be_i,\be'_i\in\CC$. 

\medskip
\noindent{\bf 1. }
Assume $n=2$. Then $f=\al_1 x_1 x_2 + \al_2 x_2 x_1$ and
$$
\begin{array}{rcl}
f(b_1,b_2) & = &\al_1 (\be_1 \be'_2 e_1e_2 + \be'_1\be_2 e_2 e_1 + \be'_1\be'_2 e_2 e_2) + 
\al_2 (\be'_1 \be_2 e_1e_2 +\be_1 \be'_2 e_2e_1 + \be'_1\be'_2 e_2 e_2) \\
&=& ((\al_1 + \ell \al_2)\be_1 \be'_2 + (\ell\al_1+\al_2)\be'_1\be_2)e_1  +  (\al_1+\al_2)\be'_1\be'_2 e_2.\\
\end{array}
$$%
Hence, if $\al_1+\al_2=0$, then case (a) holds. 

Assume, $\al_1+\al_2\neq0$. We have $\al_1 + \ell \al_2= \ell \al_1 + \al_2=0$ if and only if $\al_1=\al_2$ and $\ell=-1$; in this case we obviously have that $f(\mathbf{N}_6^{\ell})=\CC e_2$. Case (b) is proven.

If $\al_1+\ell \al_2\neq0$, then we consider $f(b_1,e_2)=(\al_1 +\ell\al_2) \be_1 e_1 +  (\al_1+\al_2)\be'_1 e_2$ to obtain that case (d) holds.

If $\ell\al_1+\al_2\neq0$, then we consider $f(e_2,b_2)=(\ell\al_1 +\al_2) \be_2 e_1 +  (\al_1+\al_2)\be'_2 e_2$ to obtain that case (d) holds.

\medskip
\noindent{\bf 2. } Assume $n> 2$. Note that for $1\leqslant i_1,\ldots,i_n\leqslant n$ we have
\begin{eq}\label{eq_eee}
e_{i_1}\cdots e_{i_n} = \left\{
\begin{array}{rl}
e_1&, \text{ if } (i_1,\ldots,i_n)=(1,2,\ldots,2)\\
\ell e_1 &, \text{ if } (i_1,\ldots,i_n)=(\underbrace{2,\ldots,2}_{ \geqslant 1\; {\rm times}},1,2,\ldots,2) \\
e_2 &, \text{ if } i_1=\cdots=i_n=2 \\
0&, \text{ otherwise} \\
\end{array}
\right..
\end{eq}
(cf.~formula~\eqref{eq_vi}). Hence, $v_{1,n}(b_1,\ldots,b_n)=b_1 \cdots b_n$ is equal to  
\begin{multline*}
\be_1 \be'_2\cdots \be'_n\, e_1e_2\cdots e_2 + 
\sum_{j=2}^n \be'_1\cdots \be'_{j-1}\be_j\be'_{j+1}\cdots \be'_n \underbrace{e_2 \cdots e_2}_{(j-1) \; {\rm times}} e_1 e_2 \cdots e_2 +
 \be'_1\cdots \be'_n \,e_2\cdots e_2 =  \\
\Big(\be_1 \be'_2\cdots \be'_n  + 
\ell \sum_{j=2}^n \be'_1\cdots \be'_{j-1}\be_j\be'_{j+1}\cdots \be'_n \Big) e_1 +
 \be'_1\cdots \be'_n \,e_2.
\end{multline*}%
Since $b_2b_3=(\be_2\be'_3 + \ell \be'_2 \be_3)e_1 + \be'_2\be'_3e_2$, we obtain
$$
b_1(b_2b_3)=\be e_1 + \be'_1\be'_2\be'_3 e_2 \;\text{ for }\;
\be=\be_1 \be'_2 \be'_3 + \ell \be'_1 \be_2 \be'_3 + \ell^2 \be'_1 \be'_2 \be_3.
$$%
Thus, $w_n(b_1,\ldots,b_n)=(b_1(b_2b_3))b_4\cdots b_n$  is equal to  
\begin{multline*}
\be \be'_4\cdots \be'_n\, e_1e_2\cdots e_2 + 
\be'_1\be'_2\be'_3 \sum_{j=4}^n \be'_4\cdots \be'_{j-1}\be_j\be'_{j+1}\cdots \be'_n \underbrace{e_2 \cdots e_2}_{ (j-3) \; {\rm times}} e_1 e_2 \cdots e_2   \\
+\be'_1\cdots \be'_n \,e_2\cdots e_2 =
 \Big(\be \be'_4\cdots \be'_n  + 
\ell \sum_{j=4}^n \be'_1\cdots \be'_{j-1}\be_j\be'_{j+1}\cdots \be'_n \Big) e_1 +
 \be'_1\cdots \be'_n \,e_2.
\end{multline*}%
Therefore, $f(b_1,\ldots,b_n)$ is equal to  
\begin{multline*}
\Big( \sum_{i=1}^n \al_i \be'_1\cdots\be'_{i-1}\be_i\be'_{i+1}\cdots \be'_n
+ \ell \sum_{i=1}^n\al_i \sum_{1\leqslant j\leqslant n,\; j\neq i} 
\be'_1\cdots\be'_{j-1}\be_j\be'_{j+1}\cdots \be'_n  \Big) e_1 \\
+ \be'_1\cdots \be'_n \Big(\sum_{i=1}^n \al_i \Big) e_2 
+ \al_0 \Big( \be \be'_4\cdots \be'_n + 
\ell \sum_{j=4}^n \be'_{1}\cdots \be'_{j-1} \be_j \be'_{j+1} \cdots \be'_n\Big) e_1 +
\al_0 \be'_1\cdots \be'_n e_2 = \\
\Bigg( \sum_{i=1}^n \be'_1\cdots\be'_{i-1}\be_i\be'_{i+1}\cdots \be'_n 
\Big( \al_i + \ell \!\!\!\!\!\! \sum_{1\leqslant j\leqslant n,\; j\neq i} \al_j\Big)
+ \al_0 \be \be'_4\cdots \be'_n \\ 
+ \ell \al_0 \sum_{j=4}^n \be'_{1}\cdots \be'_{j-1} \be_j \be'_{j+1} \cdots \be'_n \Bigg)e_1 
 +  \be'_1\cdots \be'_n s(f) \,e_2.\\
\end{multline*}%
Using the definition of $\be$, we obtain that $f(b_1,\ldots,b_n)$ is equal to
$$
\begin{array}{l}
\Bigg( \be_1 \be'_2\be'_3 \cdots \be'_n \Big(\al_0 + \al_1 + \ell \al_2 + \cdots+\ell \al_n\Big) \\
+ \be'_1 \be_2 \be'_3 \cdots \be'_n \Big(\ell \al_0 + \ell \al_1 + \al_2+ \ell \al_3 + \cdots+\ell \al_n\Big) \\
+ \be'_1 \be'_2 \be_3 \be'_4 \cdots \be'_n \Big(\ell^2 \al_0 + \ell \al_1 + \ell \al_2+  \al_3 + \ell \al_4 \cdots+\ell \al_n\Big) \\
+\sum\limits_{i=4}^n \be'_1 \cdots \be'_{i-1} \be_i \be'_{i+1} \cdots \be'_n \Big(\ell \al_0 + \ell \al_1 + \cdots + \ell \al_{i-1}+ \al_i + \ell \al_{i+1} + \cdots+\ell \al_n\Big)  \Bigg)e_1 \\
 + \be'_1\cdots \be'_n\, s(f) \,e_2 = \\
\Big(\sum\limits_{i=1}^n \be'_1 \cdots \be'_{i-1} \be_i \be'_{i+1} \cdots \be'_n \,\ga_i  \Big)e_1 
 +  \be'_1\cdots \be'_n s(f) \,e_2, \text{ where } \\
\end{array}
$$
$$\ga_1 = \ell\, s(f) - (\ell-1)(\al_0 + \al_1),\;\; 
\ga_2=\ell\, s(f) - (\ell-1)\al_2$$
$$\ga_3= \ell\, s(f) + (\ell^2-\ell)\al_0 - (\ell-1)\al_3,\;\; 
\ga_i= \ell\, s(f) - (\ell-1)\al_i \;\text{ for }\;4\leqslant i\leqslant n.$$

\noindent{}Thus, in case $s(f)=0$ we have $f(\mathbf{N}_6^{\ell})\subset \CC e_1$, and, hence, $f(\mathbf{N}_6^{\ell})= \CC e_1$.

We claim that 
\begin{eq}\label{claim_image_N6}
\ga_i=0 \text{ for all }1\leqslant i\leqslant n \;\text{ if and only if formula~\eqref{eq_image_N6_c} holds for some non-zero }\al\in\CC.
\end{eq}

To prove claim~\eqref{claim_image_N6}, we assume that $\ga_i=0$ for all $i$. Then $\al_2=\al_4=\cdots=\al_n=\ell\,s(f)/(\ell-1)$ and $\al_1=\al_2-\al_0$. Hence, $\al_3=\al_2+\ell\al_0$. Since 
$\frac{\ell-1}{\ell} \al_2=s(f)=\al_0+\cdots+\al_n=n \al_2 + \ell \al_0$, we obtain $\al_0 = \frac{\ell - n\ell - 1}{\ell^2}\al_2$ and formula~\eqref{eq_image_N6_c} follows. The inverse statement is straightforward. 

In case $\ga_i=0$ for all $1\leqslant i\leqslant n$, we have $f(\mathbf{N}_6^{\ell})= \CC e_2$. Therefore, using claim~\eqref{claim_image_N6}, we conclude the proof of part (c) of Proposition~\ref{prop_image_N6}.

Assume $\ga_i\neq 0$ for some $1\leqslant i\leqslant n$. For $b_j=e_2$ for all $1\leqslant i\leqslant n$ with $j\neq i$ we consider 
$f(b_1,\ldots,b_n)= \be_i \ga_i\, e_1 + \be'_i s(f) \,e_2$
to obtain that $f(\mathbf{N}_6^{\ell})=\mathbf{N}_6^{\ell}$. Part (d) is proven.
\end{proof}

%-----------------------------------------------------------------
\subsection*{Acknowledgments}%
The results of this paper were obtained under collaboration of the research group of UFRN ``Algebra and Universal algebraic geometry". The third author was supported by CNPq 405779/2023-2.

\bibliography{reference}

\bibliographystyle{abbrvurl}

%=====================================================
\end{document}